\author[1]{Chad Nester\corref{cor1}}
\author[1,2]{Niels Voorneveld}
\affiliation[1]{
  organization={Tallinn University of Technology},
  addressline={Akadeemia Tee 21/1},
  postcode={12618},
  city={Tallinn},
  country={Estonia}
}
\affiliation[2]{
  organization={Cybernetica AS},
  addressline={Mäealuse 2/1},
  postcode={12618},
  city={Tallinn},
  country={Estonia}
}
\title{Protocol Choice and Iteration for the Free Cornering\tnoteref{t1,t2}}
\newtheorem{proposition}{Proposition}
\newtheorem{lemma}{Lemma}
\newtheorem{conjecture}{Conjecture}
\theoremstyle{definition}
\newtheorem{definition}{Definition}
\newtheorem{example}{Example}
\newtheorem{remark}{Remark}
\newcommand{\A}{\mathbb{A}}
\newcommand{\D}{\mathbb{D}}
\newcommand{\C}{\mathbb{C}}
\newcommand{\bh}{\mathbf{H}}
\newcommand{\bv}{\mathbf{V}}
\newcommand{\getl}{\mathsf{get}_\mathsf{L}}
\newcommand{\putr}{\mathsf{put}_\mathsf{R}}
\newcommand{\getr}{\mathsf{get}_\mathsf{R}}
\newcommand{\putl}{\mathsf{put}_\mathsf{L}}
\newcommand{\ev}{\mathsf{ev}}
\newcommand{\Dst}[1]{\mathsf{S}(#1)}
\newcommand{\corner}[1]{{\text{}^\ulcorner_\llcorner\!{#1}\!_\lrcorner^\urcorner}}
\newcommand{\ex}[1]{{#1}^{\circ\bullet}}
\newcommand{\cells}[4]{({\scriptstyle #1} {{\scriptstyle #2} \atop {\scriptstyle #3}} {\scriptstyle #4})}
\newcommand{\floatcells}[4]{\left({\scriptstyle #1} {{\scriptstyle #2} \atop {\scriptstyle #3}} {\scriptstyle #4}\right)}
\newcommand{\alice}{\texttt{Alice}}
\newcommand{\bob}{\texttt{Bob}}
\newcommand{\ip}{\rotatebox[origin=c]{180}{$\pi$}}
\newcommand{\from}{\leftarrow}
\providecommand{\leftsquigarrow}{%
  \mathrel{\mathpalette\reflect@squig\relax}%
}
\newcommand{\reflect@squig}[2]{%
  \reflectbox{$\m@th#1\rightsquigarrow$}%
}
\newcommand{\pl}{p_0}
\newcommand{\pr}{p_1}
\begin{document}

\begin{abstract}
  We extend the free cornering of a symmetric monoidal category, a double categorical model of concurrent interaction, to support branching communication protocols and iterated communication protocols. We validate our constructions by showing that they inherit significant categorical structure from the free cornering, including that they form monoidal double categories. We also establish some elementary properties of the novel structure they contain. Further, we give a model of the free cornering in terms of strong functors and strong natural transformations, inspired by the literature on computational effects. 
\end{abstract}

\begin{keyword}
  Category Theory
  \sep
  Concurrency
  \sep
  Double Categories
  \sep
  Computational Effects
\end{keyword}

\maketitle

\section{Introduction}
While there are many theories of concurrent computation, none may yet claim to be the \emph{canonical} such theory. In the words of Abramsky~\cite{Abramsky2006}:
\begin{displayquote}
  It is too easy to cook up yet another variant of process calculus or algebra; there are too few constraints \ldots The mathematician Andr\'{e} Weil apparently compared finding the right definitions in algebraic number theory --- which was like carving adamantine rock --- to making definitions in the theory of uniform spaces \ldots which was like sculpting with snow. In concurrency theory we are very much at the snow-sculpture end of the spectrum. We lack the kind of external reality \ldots which is hard and obdurate, and resistant to our definitions.
\end{displayquote}

This motivates the search for categorical models of concurrency, with category theory playing the role of a suitably stubborn external reality against which to test our definitions. In this we adopt the perspective suggested by Maddy~\cite{Maddy2019} on the relationship of category theory to set theory. Loosely, set theory serves among other things as a \emph{generous arena} in which the full array of mathematical construction techniques are permissible, but with no way of discerning the mathematically promising structures from the rest (sculpting with snow). The role of category theory is to provide \emph{essential guidance}, with the idea being that the mathematically promising structures are precisely those which fit smoothly into the category-theoretic landscape (carving rock). Whether or not category theory can provide this sort of essential guidance in the theory of concurrent computation has not yet been conclusively established, but the idea is a compelling one, and we work under the assumption that it can and will. 

 This paper concerns the free cornering of a symmetric monoidal category, a categorical model of concurrent interaction proposed by Nester~\cite{Nester2021a,Nester2023}. The model builds on the resource-theoretic interpretation of symmetric monoidal categories as a kind of process theory (see e.g.,~\cite{Coecke14}) by defining the \emph{free cornering} of a given symmetric monoidal category to be a certain double category in which the processes represented by the base are augmented with \emph{corner cells}. The cells of the free cornering admit interpretation as \emph{interacting processes}, with the corner cells embodying a notion of message passing. The corner cells are precisely what is required to make the free cornering into a \emph{proarrrow equipment}, which is a kind structured double category that plays a fundamental role in formal category theory. We remark that for the structure of a proarrow equipment to coincide with a notion of message passing would seem to be essential guidance of the highest quality. That said, the free cornering is currently far from being a canonical model of concurrent computation. Much work remains to be done, and many connections remain unexplored.

 More specifically, this paper addresses the connection of the free cornering to notions of \emph{session type}. Roughly, session types are to communication protocols what data types are to structured data. Much like the way that data types constrain the possible data values a process operates on, session types constrain the behaviour of a process so that it conforms to the corresponding communication protocol. In the free cornering of a symmetric monoidal category, process interaction is governed by the \emph{monoid of exchanges}, whose elements may be understood as a basic sort of session type. The communication protocols expressible in this way are those in which messages of a predetermined type are sent and received in a predetermined sequence. Viewed this way, the monoid of exchanges is missing a number of features that systems of session types are expected to have. In particular, \emph{branching protocols} --- in which one of the participants chooses which of two possible continuations of the current protocol will happen and the other participant must react --- and \emph{iterated protocols}, in which all or part of the protocol is carried out some number of times based on choices made by the participants.

In an effort to rectify the situation, we construct the \emph{free cornering with choice} and \emph{free cornering with iteration} of a distributive monoidal category. The free cornering with choice supports branching communication protocols in addition to those of the free cornering, and the free cornering with iteration supports iterated communication protocols in addition to those of the free cornering with choice. To ask that the base category is distributive monoidal is to ask that it supports a kind of sequential branching, analogous to the ``if then else'' statements present in many programming languages. In the free cornering with choice, this sequential branching structure interacts nontrivially with branching communication protocols, and is what allows a process to decide which branch of a protocol to select based on its inputs. 

We prove some elementary results concerning the monoidal category of \emph{horizontal cells} of the free cornering with choice and the free cornering with iteration. The objects of the category of horizontal cells correspond to communication protocols. We show that the category of horizontal cells of the free cornering with choice has binary products and coproducts given by branching communication protocols. Further, we characterize iterated communication protocols in the category of horizontal cells of the free cornering with iteration by showing that they arise as initial algebras (or dually, final coalgebras) of a suitable endofunctor, and that process iteration forms a monad (or dually, a comonad). 

An important feature of the free cornering is the existence of well-behaved \emph{crossing cells}, which among other things carry the structure of a \emph{monoidal double category} in the sense of Shulman~\cite{Shulman2010}. We extend the construction of crossing cells in the free cornering to construct crossing cells in the free cornering with choice and the free cornering with iteration, and show that these crossing cells remain well-behaved. It follows that the free cornering with choice and free cornering with iteration also form monoidal double categories. We view the presence of these well-behaved crossing cells as a kind of sanity check. 

As an additional sanity check, we construct a model of the free cornering, and extend it to give a model of the free cornering with choice and the free cornering with iteration. Specifically, from a cartesian closed category we construct a double category of \emph{stateful transformations}, in which cells are given by strong natural transformations between strong endofunctors on the base category (see e.g.~\cite{McDermott2022}). This double category is a model of the free cornering in the sense that there is a structure-preserving double functor from the free cornering of the base category into the category of stateful transformations. We show that under additional assumptions on the base category, the double category of stateful transformations gives a model of the free cornering with choice and the free cornering with iteration in this sense. The existence of such a model is reassuring in that it tells us the axioms of the free cornering with choice and free cornering with iteration do not collapse. Strong functors play an important role in the categorical semantics of effectful computations (see e.g,\cite{Moggi91,Plotkin01}), and so the double category of stateful transformations may be of independent interest. 

 \paragraph{Contributions}
 The central contributions of this paper are the construction of the free cornering with choice (Definitions~\ref{def:choice-exchanges} and~\ref{def:free-cornering-choice}) and the free cornering with iteration (Definitions~\ref{def:iteration-exchanges} and~\ref{def:iter-doub}). In this we include the results validating the two constructions, specifically the contents of Sections~\ref{subsec:choice-properties}, \ref{subsec:choice-crossing}, \ref{subsec:iteration-properties}, and~\ref{subsec:iteration-crossing}. A further contribution is the construction of the double category of stateful transformations together with the fact that it models the free cornering (Section~\ref{subsec:stateful-transformations}), free cornering with choice (Section~\ref{subsec:stateful-choice}), and free cornering with iteration (Section~\ref{subsec:stateful-iteration}) in the presence of suitable assumptions on the base category. Finally, Lemma~\ref{lem:coherent-crossing} is a minor contribution to the theory of the free cornering (without choice or iteration): while in previous work on the free cornering it has been part of the definition of the crossing cells, in recapitulating the material on crossing cells we realised that it was in fact a consequence of the slightly weaker definition used here. 

 \paragraph{Related Work}
 Double categories first appear in~\cite{Ehr63}. Free double categories are considered in~\cite{Daw02} and again in~\cite{Fio08}. The idea of a proarrow equipment first appears in~\cite{Woo82}, albeit in a rather different form. Proarrow equipments have subsequently appeared under many names in formal category theory (see e.g.,~\cite{Shulman2008,Grandis2004}). The string diagrams for double categories and proarrow equipments that we will use without comment are given a detailed treatment in~\cite{Mye16}. The free cornering was introduced in~\cite{Nester2021a}, and has been developed in~\cite{Nester2023,Nester2022,Boisseau2022}. Session types were introduced by Honda~\cite{Honda1993} and the idea has since been developed by a number of authors. While the purpose of this paper is to develop more sophisticated session types in the free cornering, we are primarily influenced not by the literature on session types after Honda but by the \emph{logic of message passing} of Cockett and Pastro~\cite{Cockett2009}, in which process communication is modelled categorically by \emph{linear actegories} (the semantics of a kind of augmented linear logic). A particular point of difference between these two lines of research is that in the logic of message passing, like in the free cornering, the protocols that our types describe are \emph{two-sided}, requiring a left and right participant. In the literature on session types after Honda the protocol types are \emph{one-sided}, and two participants must each conform to a session type \emph{dual} to that of the other if they wish to interact. The connection between session types and linear logic is explored from slightly different angles in~\cite{Wadler2014} and~\cite{Caires2010}, and all of this seems to have been heavily influenced by the early work of Bellin and Scott~\cite{Bellin1994}. In our use of distributive monoidal categories to model branching programs and datatypes we follow Walters~\cite{Walters1989}.
 The definitions of protocols in the model of stateful transformations are based on the theory of strong functors and monads for describing computational effects by Moggi~\cite{Moggi91}.
 Stateful transformations themselves are inspired by \emph{stateful runners}~\cite{Uustalu2015}, and \emph{interaction laws} as described in~\cite{Katsumata20}.
 
 \paragraph{Organisation} In Section~\ref{sec:corner-classical} we give an introduction to single-object double categories (Section~\ref{subsec:single-double}); recapitulate the construction of the free cornering of a symmetric monoidal category (Section~\ref{subsec:free-cornering}); recall the construction of crossing cells in the free cornering along with certain properties of crossing cells (Section~\ref{subsec:crossing-cells}); and introduce the double category of stateful transformations and its relationship to the free cornering (Section~\ref{subsec:stateful-transformations}). Section~\ref{sec:corner-choice} concerns the free cornering with choice. We introduce distributive monoidal categories and discuss the way in which they model branching sequential processes (Section~\ref{subsec:distributive-branching}); introduce the free cornering with choice of a distributive monoidal category together with its interpretation (Section~\ref{subsec:free-choice}); establish a few elementary properties of the resulting single-object double category (Section~\ref{subsec:choice-properties}); show that the construction of crossing cells in the free cornering extends to the free cornering with choice, and that the attendant properties of crossing cells hold in the larger setting (Section~\ref{subsec:choice-crossing}); and show that when the base cartesian closed category is distributive the category of stateful transformations gives a model of the free cornering with choice (Section~\ref{subsec:stateful-choice}). Section~\ref{sec:corner-iteration} concerns the free cornering with iteration, and its organisation is similar to that of Section~\ref{sec:corner-choice}. We introduce the free cornering with iteration of a distributive monoidal category together with its interpretation (Section~\ref{subsec:free-iteration}); establish a few elementary properties of the resulting single-object double category, in particular that our notion of iteration is (co)monadic (Section~\ref{subsec:iteration-properties}); show that the construction of crossing cells in the free cornering with choice extends to the free cornering with iteration, and that the attendant properties of crossing cells conitnue to hold in the larger setting (Section~\ref{subsec:iteration-crossing}); and show that when we consider only the part of the double category of stateful transformations given by certain \emph{container functors} on the base category $\mathsf{Set}$ this gives a model of the free cornering with iteration (Section~\ref{subsec:stateful-iteration}). We conclude and discuss a number of directions for future work in Section~\ref{sec:concluding-remarks}. 

 \paragraph{Prerequisites}
 We assume some familiarity with elementary category theory (see e.g.,~\cite{MacLane1971}), cartesian closed categories (see e.g.,~\cite{Lambek86}), and in particular with symmetric monoidal categories and their string diagrams (see e.g.,~\cite{Selinger2010}). While knowledge of the theory of double categories would certainly be helpful, it is not strictly required, and we provide a brief technical introduction in Section~\ref{subsec:single-double} that covers everything we will need for our development. The sections concerning the double category of stateful transformations make heavy use of the notion of tensorial strength and the associated notion of strong natural transformation. We give the necessary definitions in Section~\ref{subsec:stateful-transformations}, but prior familiarity would, of course, be helpful (see e.g.,~\cite{Cockett1991}).

\section{The Free Cornering}\label{sec:corner-classical}
The aim of this section is to introduce the free cornering of a symmetric monoidal category. We begin by recapitulating some basic double category theory in the single-object case, which occupies Section~\ref{subsec:single-double}. This done, in Section~\ref{subsec:free-cornering} we recapitulate the free cornering construction and its interactive interpretation. In Section~\ref{subsec:crossing-cells} we recall the \emph{crossing cells} of the free cornering. We recall certain important properties of the crossing cells, the continuing validity of which will serve as a kind of litmus test for the soundness of our notions of choice and iteration in the sequel. Finally, in Section~\ref{subsec:stateful-transformations} we construct a double category of \emph{stateful transformations} over a cartesian closed category, and moreover show that it is a model of the free cornering of that category, possessing corner cells and crossing cells in an interesting fashion. This will serve as a running example throughout the paper. 

Before we begin, we must briefly discuss strictness and notation. We write composition of arrows in a category in \emph{diagrammatic order}. That is, the composite of $f : A \to B$ and $g : B \to C$ is written $fg : A \to C$. While we may write $g \circ f : A \to C$, we will \emph{never} write $gf : A \to C$. Moreover, in this paper we consider only \emph{strict} monoidal categories, and in our development the term ``monoidal category'' should be read as ``strict monoidal category''. That said, we imagine that our results will hold in some form for arbitrary monoidal categories via the coherence theorem for monoidal categories~\cite{MacLane1971}. Similarly, our double categories are what some authors call \emph{strict double categories}. The braiding maps in a symmetric monoidal category will be written $\sigma_{A,B} : A \otimes B \to B \otimes A$. Further notational conventions will be introduced as needed.

\subsection{Single-Object Double Categories}\label{subsec:single-double}
In this section we set up the rest of our development by recalling the theory of single-object double categories, being those double categories $\D$ with exactly one object. In this case $\D$ consists of a \emph{horizontal edge monoid} $\D_H = (\D_H, \otimes, I)$, a \emph{vertical edge monoid} $\D_V = (\D_V, \otimes, I)$, and a collection of \emph{cells}
\[
\includegraphics[height=1.7cm,align=c]{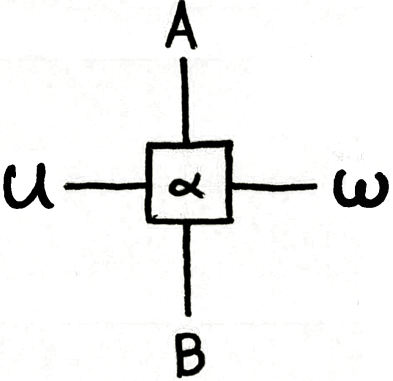}
\]
where $A,B \in \D_H$ and $U,W \in \D_V$. We write $\D\cells{U}{A}{B}{W}$ for the \emph{cell-set} of all such cells in $\D$, and write $\alpha : \D\cells{U}{A}{B}{W}$ to indicate the membership of $\alpha$ in a cell-set. When $\D$ is clear from context, we write $\cells{U}{A}{B}{W}$ instead of $\D\cells{U}{A}{B}{W}$. Given cells $\alpha : \cells{U}{A}{B}{V}$ and $\beta : \cells{V}{A'}{B'}{W}$ for which the right boundary of $\alpha$ matches the left boundary of $\beta$ we may form a cell $\alpha \vert \beta : \cells{U}{A \otimes A'}{B \otimes B'}{W}$ -- their \emph{horizontal composite} -- and similarly if the bottom boundary of $\alpha : \cells{U}{A}{C}{W}$ matches the top boundary of $\beta : \cells{U'}{C}{B}{W'}$ we may form $\frac{\alpha}{\beta} : \cells{U \otimes U'}{A}{B}{W \otimes W'}$ -- their \emph{vertical composite} -- with the boundaries of the composite cell formed from those of the component cells using the binary operation associated with the appropriate monoid (both written $\otimes$). We depict horizontal and vertical composition, respectively, as in:
\begin{mathpar}
\includegraphics[height=1.7cm,align=c]{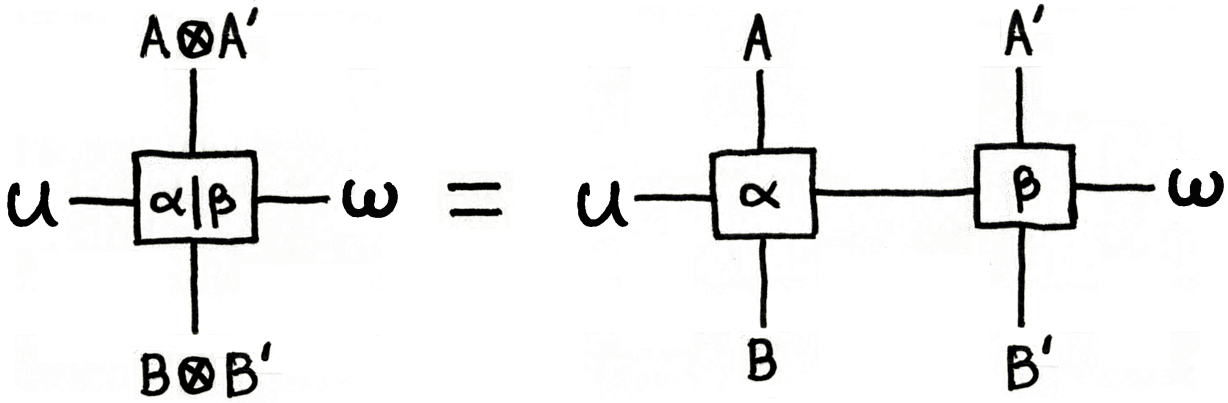}

\text{and}

\includegraphics[height=2.3cm,align=c]{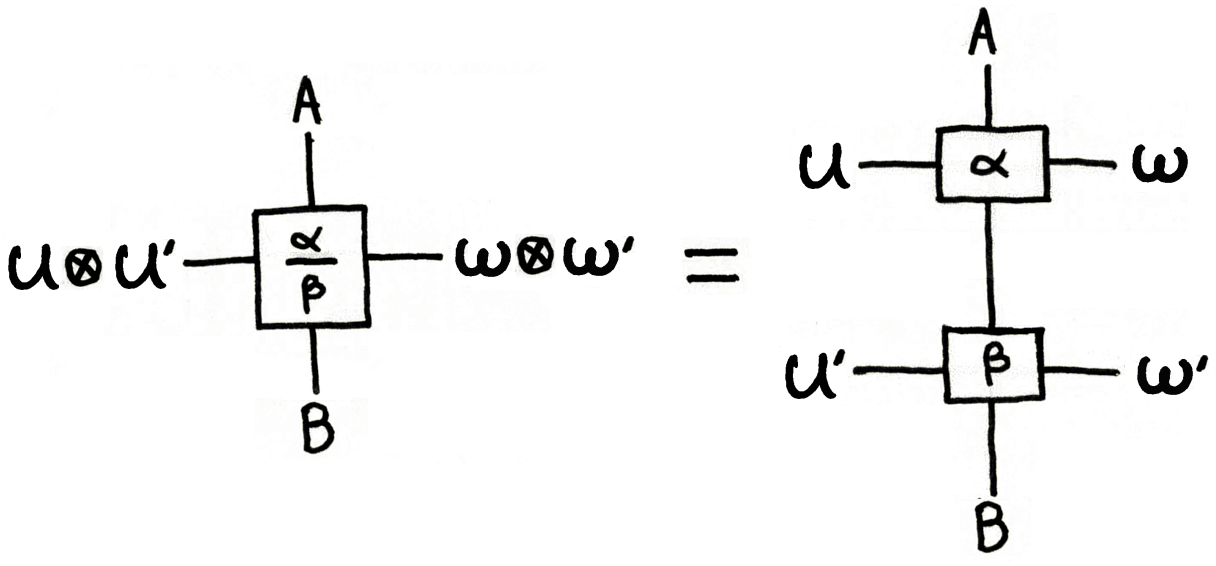}
\end{mathpar}
Horizontal and vertical composition of cells are required to be associative and unital. We write $id_U : \cells{U}{I}{I}{U}$ and $1_A : \cells{I}{A}{A}{I}$ for units of horizontal and vertical composition, respectively. We omit wires of sort $I$ in our depictions of cells, allowing us to depict horizontal and vertical identity cells, respectively, as in:
\begin{mathpar}
\includegraphics[height=1.7cm,align=c]{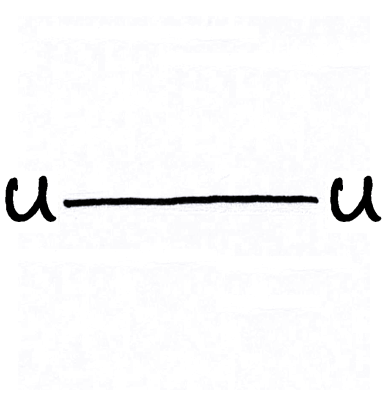}

\text{and}

\includegraphics[height=1.7cm,align=c]{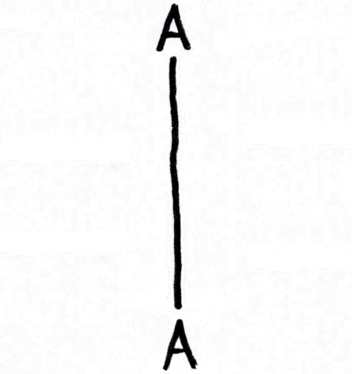}
\end{mathpar}
Finally, the horizontal and vertical identity cells of type $I$ must coincide -- we call this cell $\square_I = 1_I = id_I : \cells{I}{I}{I}{I}$ and depict it as empty space, see below on the left -- and vertical and horizontal composition must satisfy the interchange law. That is, $\frac{\alpha}{\beta}\vert \frac{\gamma}{\delta} = \frac{\alpha \vert \gamma}{\beta \vert \delta}$, allowing us to unambiguously interpret the diagram below on the right:
\begin{mathpar}
\includegraphics[height=1.7cm,align=c]{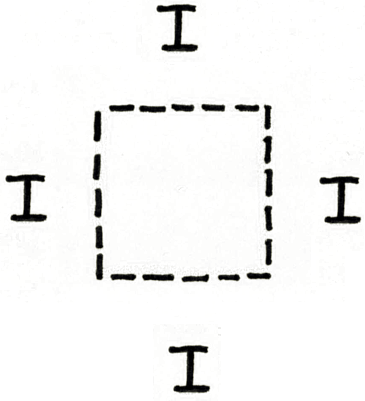}

\phantom{\text{and}}

\includegraphics[height=2.3cm,align=c]{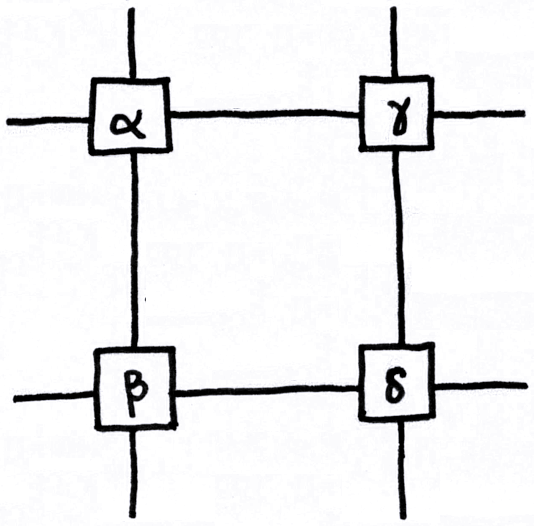}
\end{mathpar}

Every single-object double category $\D$ defines monoidal categories $\bv \D$ and $\bh \D$, consisting of the cells for which the $\D_V$ and $\D_H$ valued boundaries respectively are all $I$, as in:
\begin{mathpar}
\includegraphics[height=1.7cm,align=c]{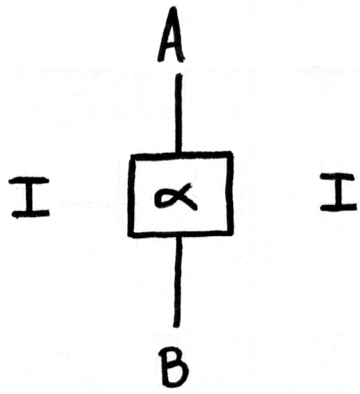}

\text{and}

\includegraphics[height=1.7cm,align=c]{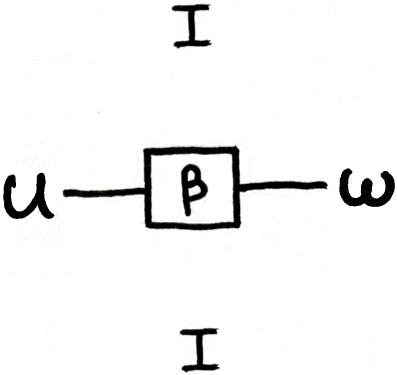}
\end{mathpar}
That is, the collection of objects of $\bv \D$ is $\D_H$, composition in $\bv \D$ is vertical composition of cells, and the tensor product in $\bv \D$ is given by horizontal composition:
\[
\includegraphics[height=1.7cm]{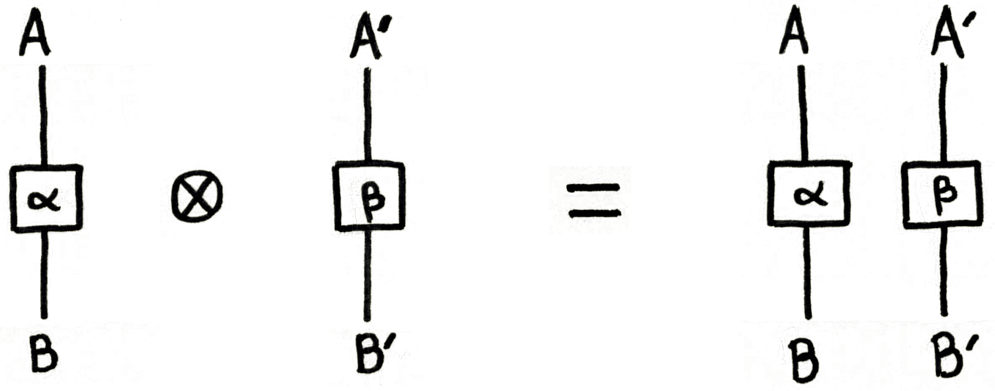}
\]
In this way, $\bv \D$ forms a monoidal category, which we call the category of $\emph{vertical cells}$ of $\D$. Similarly, $\bh \D$ is also a monoidal category (with collection of objects $\D_V$) which we call the \emph{horizontal cells} of $\D$.

\subsection{The Free Cornering}\label{subsec:free-cornering}
In this section we introduce the free cornering of a symmetric monoidal category. It is useful to frame this construction in terms of the resource-theoretic understanding of symmetric monoidal categories~\cite{Coecke14}. That is, objects are understood of as collections of resources. The tensor product $A \otimes B$ of two objects is the collection consisting of $A$ and $B$, and the unit $I$ is the empty collection. Morphisms are understood as \emph{transformations}, with $f : A \to B$ being understood as a way to transform the resources of $A$ to the resources of $B$. We adopt this perspective here and use the associated vocabulary to elucidate our development. 

We begin with the monoid of exchanges over a symmetric monoidal category:
\begin{definition}\label{def:monoid-exchanges} Let $\A$ be a symmetric monoidal category. Define the monoid $\ex{\A}$ of \emph{$\A$-valued exchanges} to be the free monoid on the set of polarized objects of $\A$, as in $\ex{\A} = (\A_0 \times \{ \circ, \bullet \})^*$. Explicitly, $\ex{\A}$ has elements given by:
  \begin{mathpar}
    \inferrule{A \in \A_0}{A^\circ \in \ex{\A}}

    \inferrule{A \in \A_0}{A^\bullet \in \ex{\A}}

    \inferrule{\text{}}{I \in \ex{\A}}

    \inferrule{U \in \ex{\A} \\ W \in \ex{\A}}{U \otimes W \in \ex{\A}}
  \end{mathpar}
  subject to the following equations:
  \begin{mathpar}
  I \otimes U = U

  U \otimes I = U

  (U \otimes W) \otimes V = U \otimes (W \otimes V)
  \end{mathpar}
  We may omit brackets as in $A^\circ \otimes B^\circ \otimes C^\bullet$, as associativity of $\otimes$ ensures that this denotes an element of $\ex{\A}$ unambiguously.
\end{definition}
The $\A$-valued exchanges are interpreted as follows: each $X_1 \otimes \cdots \otimes X_n \in \ex{\A}$ involves a left participant and a right participant giving each other resources in sequence, with $A^\circ$ indicating that the left participant should give the right participant an instance of $A$, and $A^\bullet$ indicating the opposite. For example say the left participant is $\alice$ and the right participant is $\bob$. Then we can picture the exchange $A^\circ \otimes B^\bullet \otimes C^\bullet$ as:
\[
\alice \rightsquigarrow
\includegraphics[height=1cm,align=c]{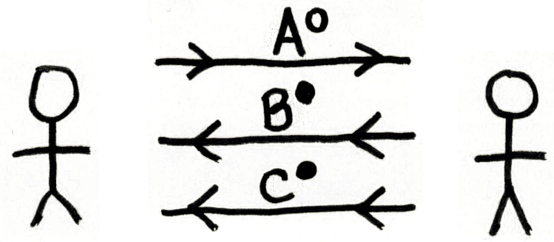}
\leftsquigarrow \bob
\]
These exchanges happen \emph{in order}. For example the exchange pictured above demands that first $\alice$ gives $\bob$ an instance of $A$, then $\bob$ gives $\alice$ an instance of $B$, and then finally $\bob$ gives $\alice$ an instance of $C$.

The monoid of $\A$-valued exchanges plays an important role in the free cornering of $\A$, which we introduce presently:
\begin{definition}[\cite{Nester2021a}]\label{def:free-cornering}
  Let $\A$ be a monoidal category. We define the \emph{free cornering} of $\A$, written $\corner{\A}$, to be the free single-object double category with horizontal edge monoid $(\A_0, \otimes, I)$, vertical edge monoid $\ex{\A}$, and generating cells and equations consisting of:
  \begin{itemize}
  \item For each $f : A \to B$ of $\A$ a cell $\corner{f} : \corner{\A}\cells{I}{A}{B}{I}$ subject to equations:
  \begin{mathpar}
    \corner{fg} = \frac{\corner{f}}{\corner{g}}

    \corner{1_A} = 1_A
    
    \corner{f \otimes g} = \corner{f} \mid \corner{g}
  \end{mathpar}
  One way to understand these is to notice that they allow us to interpret string diagrams denoting morphisms of $\A$ as cells of $\corner{\A}$ unambiguously:
  \begin{mathpar}
    \includegraphics[height=1cm,align=c]{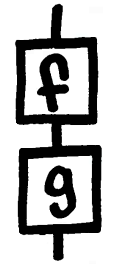}
    
    \includegraphics[height=1cm,align=c]{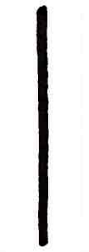}

    \includegraphics[height=1cm,align=c]{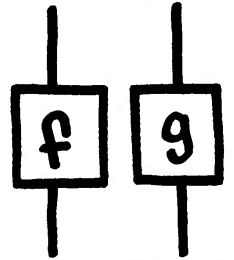}
  \end{mathpar}
  We write $\corner{f} = f$ when it is clear in context that $f$ denotes a cell of $\corner{\A}$. 
  \item For each object $A$ of $\A$, \emph{corner cells} $\getl^A : \corner{\A}\cells{A^\circ}{I}{A}{I}$, $\putr^A : \corner{\A}\cells{I}{A}{I}{A^\circ}$, $\getr^A : \corner{\A}\cells{I}{I}{A}{A^\bullet}$, and $\putl^A : \corner{\A}\cells{A^\bullet}{A}{I}{I}$, which we depict as follows:
  \[
  \includegraphics[height=1.7cm,align=c]{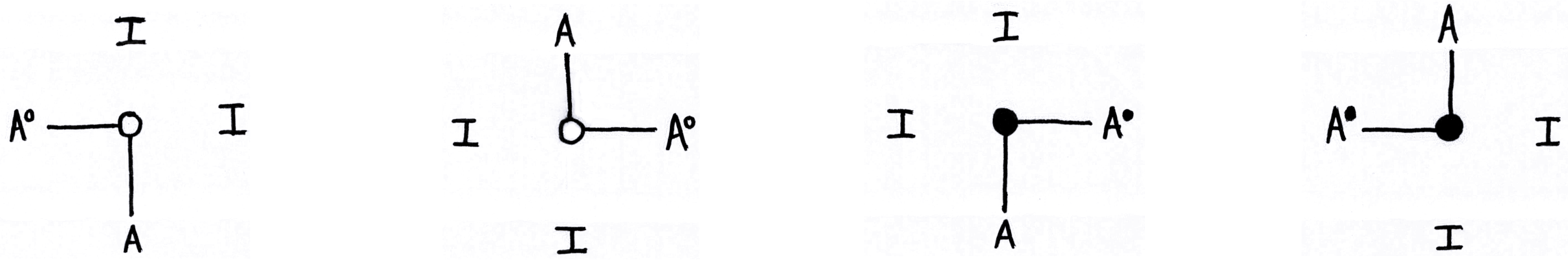}
  \]
  The corner cells are subject to the \emph{yanking equations}:
  \[
  \includegraphics[height=1cm,align=c]{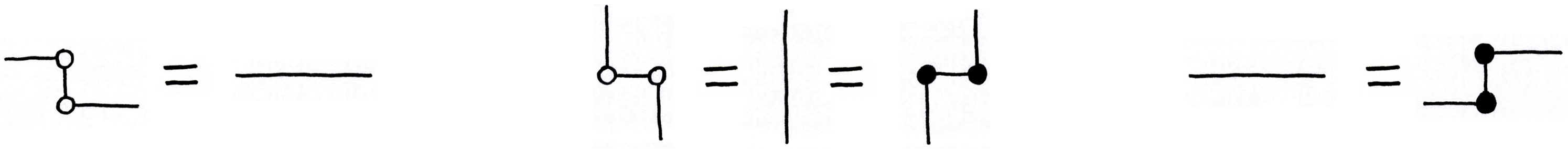}
  \]
  Intuitively, the corner cells send and receive resources along the left and right boundaries. The yanking equations allow us to carry out exchanges between horizontally composed cells, and tell us that being exchanged has no effect on the resources involved. 
  \end{itemize}
\end{definition}
For a precise development of free double categories see~\cite{Fio08}. Briefly, cells are formed from the generating cells by horizontal and vertical composition, subject to the axioms of a double category in addition to any generating equations. The corner structure has been heavily studied under various names including \emph{proarrow equipment}, \emph{framed bicategory}, \emph{connection structure}, and \emph{companion and conjoint structure}. A good resource is the appendix of~\cite{Shulman2008}.
 
Cells of $\corner{\A}$ can be understood as \emph{interacting} morphisms of $\A$. Each cell is a method of obtaining the resources of bottom boundary from those of the top boundary by participating in $\A$-valued exchanges along the left and right boundaries in addition to using the resource transformations supplied by $\A$. For example, if the morphisms of $\A$ describe the procedures involved in baking bread, we might have the following cells of $\corner{\A}$:
\begin{mathpar}
\includegraphics[height=1.7cm,align=c]{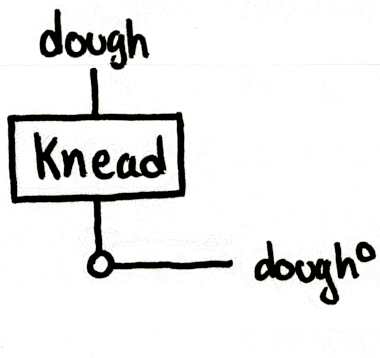}

\includegraphics[height=1.7cm,align=c]{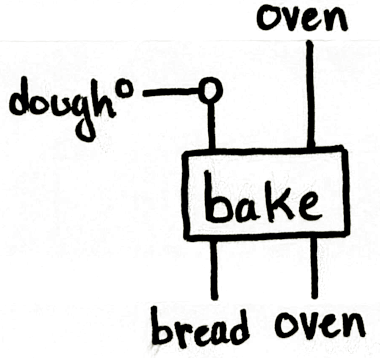}

\includegraphics[height=3cm,align=c]{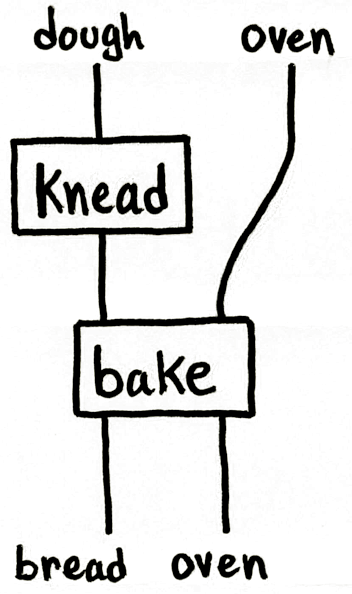}
\end{mathpar}
The cell on the left describes a procedure for transforming \texttt{dough} into nothing by \texttt{knead}ing it and sending the result away along the right boundary, and the cell in the middle describes a procedure for transforming an \texttt{oven} into \texttt{bread} and an \texttt{oven} by receiving \texttt{dough} along the left boundary and then using the \texttt{oven} to \texttt{bake} it. Composing these cells horizontally results in the cell on the right via the yanking equations. In this way the free cornering models process interaction, with the corner cells capturing the flow of information across components.

\subsection{Crossing Cells}\label{subsec:crossing-cells}
In this section we recall \emph{crossing cells}, an interesting bit of structure that exists in the free cornering of any symmetric monoidal category. We recall a few results concerning the crossing cells, which are quite well-behaved. Our purpose in doing so is mainly to extend these results later on when we add choice and iteration to the free cornering. The crossing cells will remain well-behaved, which is a sign that our notions of choice and iteration are formally coherent.

\begin{definition}[\cite{Nester2021a}]\label{def:crossing-cells}
  Let $\A$ be a symmetric monoidal category. For each $A \in \corner{\A}_H$ and each $U \in \corner{\A}_V$ we define a crossing cell $\chi_{U,A}$, pictured as in:
\[
\includegraphics[height=1.7cm]{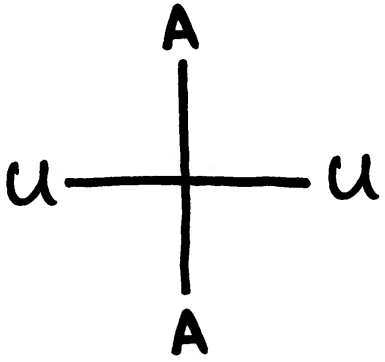}
\]
inductively as follows: define $\chi_{A^\circ,B}$ and $\chi_{A^\bullet,B}$ as in the diagrams below on the left and right, respectively:
\[
\includegraphics[height=1.7cm]{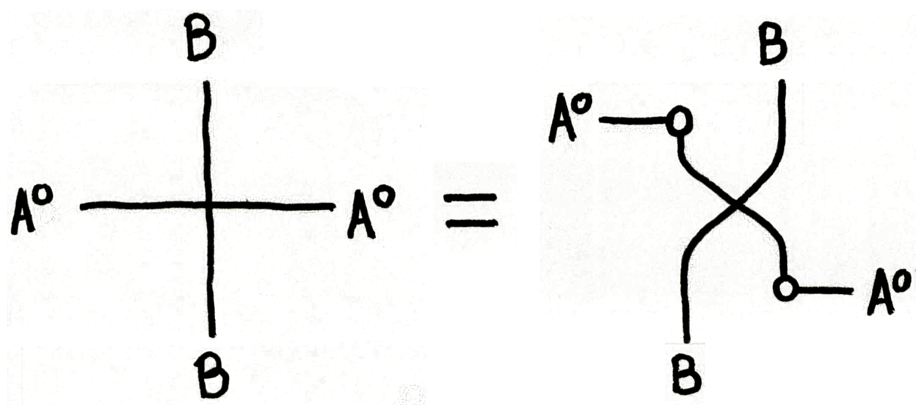}
\hspace{2cm}
\includegraphics[height=1.7cm]{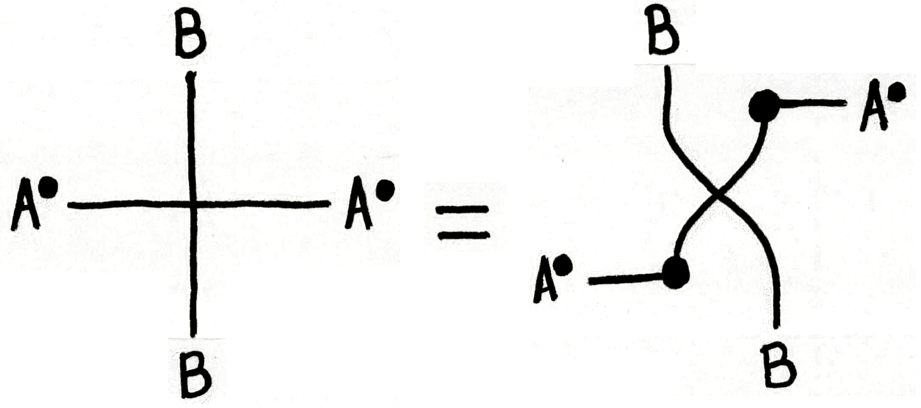}
\]
further, define $\chi_{I,A} = 1_A$ and $\chi_{U \otimes W,A} = \frac{\chi_{U,A}}{\chi_{W,A}}$, as in:
\begin{mathpar}
\includegraphics[height=1.6cm]{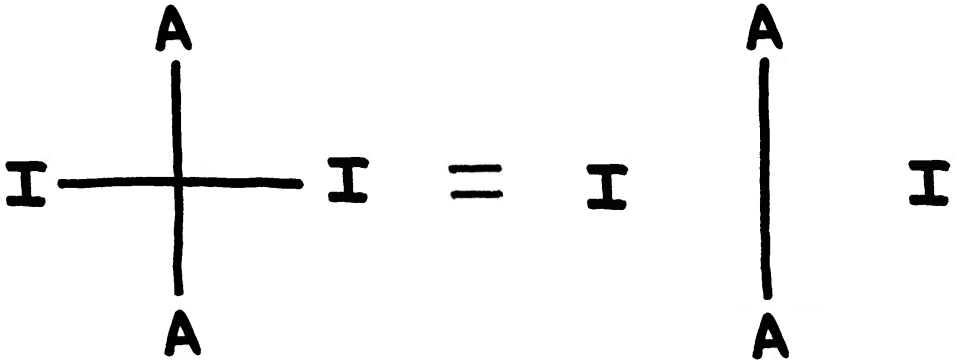}

\includegraphics[height=1.6cm]{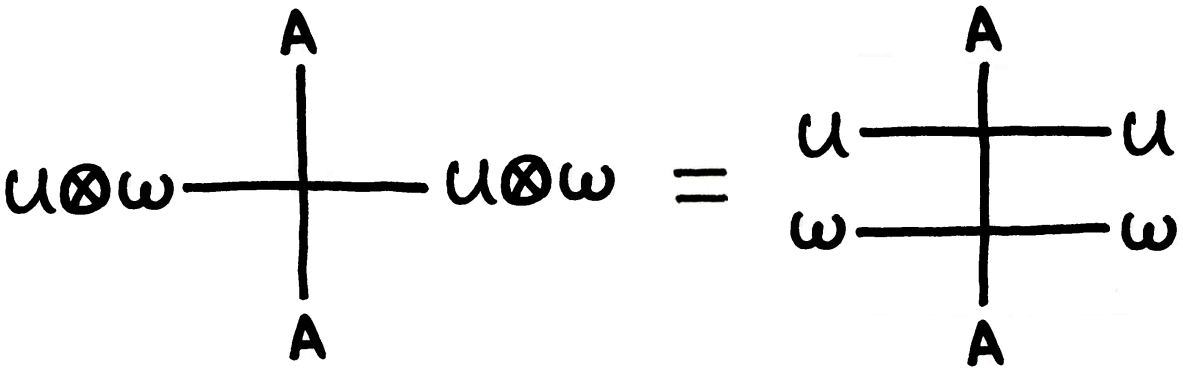}
\end{mathpar}
\end{definition}

We note that this definition differs slightly from that given in~\cite{Nester2021a, Nester2023}. In previous work on the free cornering, the definition of crossing cells included the assumption that they were coherent with respect to horizontal composition. We show that in fact, this can be derived:

\begin{lemma}\label{lem:coherent-crossing}
  Let $\A$ be a symmetric monoidal category. For $U \in \ex{A}$ and $A,B \in \A_0$ the following equations hold in $\corner{\A}$:
  \begin{enumerate}[(i)]
  \item $\chi_{U,{A \otimes B}} = \chi_{U,A} \mid \chi_{U,B}$
  \item $\chi_{U,I} = id_U$
  \end{enumerate}
\end{lemma}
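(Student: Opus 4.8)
The plan is to prove both equations by induction on the structure of $U$ in the free monoid $\ex{\A}$. Every exchange is either the unit $I$, a single polarized generator $C^\circ$ or $C^\bullet$, or a tensor product $V \otimes W$, and the defining clause $\chi_{V \otimes W, X} = \frac{\chi_{V,X}}{\chi_{W,X}}$ reduces the tensor case to the component cases. Consequently the content of the lemma lives entirely in the generator cases, while the unit and tensor cases are formal.

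First I would dispatch the tensor and unit cases, which are uniform. For (i), the defining clause and the induction hypothesis give $\chi_{V \otimes W, A \otimes B} = \frac{\chi_{V,A \otimes B}}{\chi_{W,A \otimes B}} = \frac{\chi_{V,A} \mid \chi_{V,B}}{\chi_{W,A} \mid \chi_{W,B}}$, and a single application of the interchange law rewrites this as $\frac{\chi_{V,A}}{\chi_{W,A}} \mid \frac{\chi_{V,B}}{\chi_{W,B}} = \chi_{V \otimes W, A} \mid \chi_{V \otimes W, B}$. For (ii) the induction hypothesis gives $\chi_{V \otimes W, I} = \frac{\chi_{V,I}}{\chi_{W,I}} = \frac{id_V}{id_W}$, which equals $id_{V \otimes W}$ since horizontal identity cells compose vertically as dictated by the vertical edge monoid. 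The unit base cases are immediate: for (i), $\chi_{I, A \otimes B} = 1_{A \otimes B} = 1_A \mid 1_B = \chi_{I,A} \mid \chi_{I,B}$, using the instance $\corner{1_A \otimes 1_B} = \corner{1_A} \mid \corner{1_B}$ together with $\corner{1_A} = 1_A$; and for (ii), $\chi_{I,I} = 1_I = id_I$ by the axiom identifying the horizontal and vertical identity cells of type $I$.

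The real work is in the generator base cases $U = C^\circ$ and, dually, $U = C^\bullet$. Here I would unfold the explicit definitions of the white and black crossing cells, which are assembled from the corner cells $\getl^C, \putr^C$ (resp. $\getr^C, \putl^C$), a pass-through wire for the crossed object, and a cell representing the symmetry $\sigma$ of $\A$ that routes $C$ from the left boundary to the right boundary past that object. For part (i) the claim is that routing $C$ past $A \otimes B$ agrees with routing it past $A$ and then past $B$; this should follow from the coherence of the symmetry (a braiding past a tensor product factors as braidings past each component) together with interchange and the equation $\corner{f \otimes g} = \corner{f} \mid \corner{g}$. For part (ii) one sets the crossed object to $I$, whereupon the pass-through wire and the symmetry degenerate and the surviving composite of a single $\getl^C$ above a single $\putr^C$ (resp. $\getr^C$, $\putl^C$) collapses to $id_{C^\circ}$ (resp. $id_{C^\bullet}$) by the yanking equations.

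I expect the generator base cases to be the only real obstacle, since they require honest diagrammatic computation with the corner cells and the symmetry rather than formal bookkeeping; the decisive inputs will be the yanking equations for part (ii) and the coherence of the braiding together with the interchange law for part (i). Once these base cases are checked, the induction closes mechanically.
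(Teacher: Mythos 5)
Your proposal is correct and follows essentially the same route as the paper: structural induction on $U$, with the unit and tensor cases handled formally by interchange and the identity axioms, and the generator cases $C^\circ$, $C^\bullet$ settled by unfolding the crossing-cell definition and computing with the corner cells and the symmetry. The only small imprecision is that the yanking equations are needed in the generator case of part (i) as well (to cancel the intermediate $\putr^C$/$\getl^C$ pair created by horizontal composition), not only in part (ii), alongside the braiding coherence $\sigma_{C,A\otimes B} = (\sigma_{C,A}\otimes 1_B)(1_A\otimes\sigma_{C,B})$ that you correctly identify.
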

\begin{proof}
  \begin{enumerate}[(i)]
    \item By structural induction on $U$. In case $U = C^\circ$ we have:
    \begin{mathpar}
      \includegraphics[height=1.6cm,align=c]{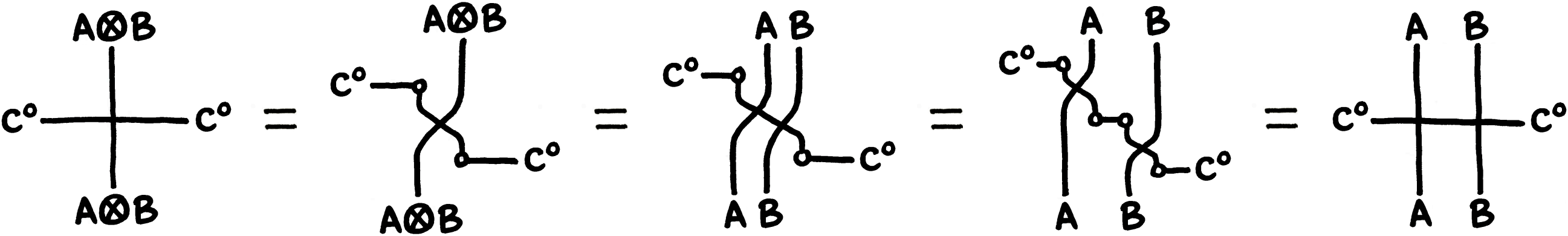}
    \end{mathpar}
    as required. The case for $U = C^\bullet$ is similar. If $U = I$ then we have:
    \[ \chi_{I,A \otimes B} = \corner{1_{A \otimes B}} = \corner{1_A} \mid \corner{1_B} = \chi_{I,A} \mid \chi_{I,B}\] 
    For the inductive case $U \otimes W$ we have
    \[ \chi_{U \otimes W,A \otimes B} = \frac{\chi_{U,A \otimes B}}{\chi_{W,A \otimes B}} = \frac{\chi_{U,A} \mid \chi_{U,B}}{\chi_{W,A} \mid \chi_{W,B}} = \frac{\chi_{U,A}}{\chi_{W,A}} \mid \frac{\chi_{U,B}}{\chi_{W,B}} = \chi_{U \otimes W,A} \mid \chi_{U \otimes W,B}\]
    The claim follows.
  \item By induction on the structure of $U$. If $U = A^\circ$ then we have:
    \begin{mathpar}
      \includegraphics[height=1.6cm,align=c]{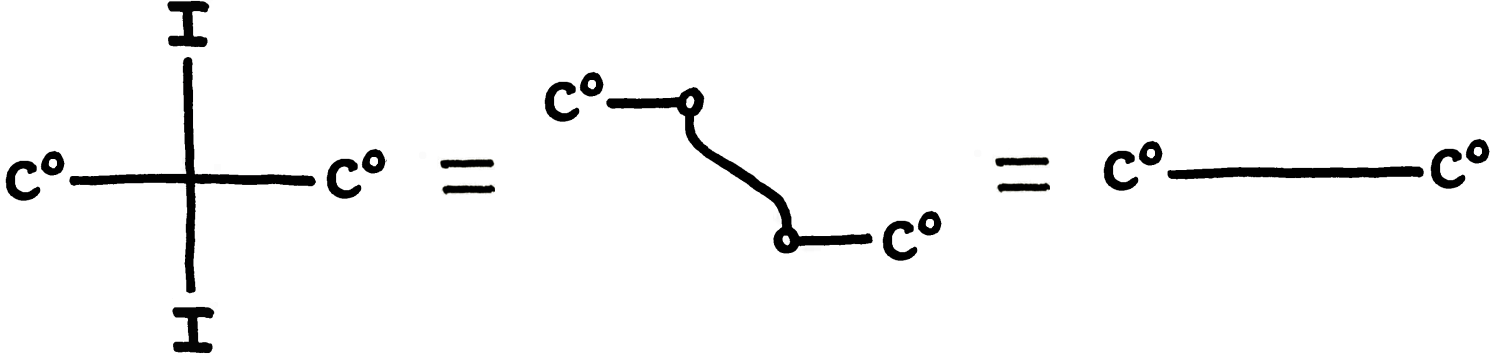}
    \end{mathpar}
    as required. The case for $U = A^\bullet$ is similar. If $U = I$ then we have:
    \[
    \chi_{U,I} = \chi_{I,I} = 1_I = id_I
    \]
    For $U \otimes W$, we have:
    \[
    \chi_{U \otimes W,I} = \frac{\chi_{U,I}}{\chi_{W,I}} = \frac{id_U}{id_W} = id_{U \otimes W}
    \]
    The claim follows.
  \end{enumerate}
\end{proof}

Additionally, the crossing cells carry interesting categorical structure. The core technical lemma underpinning this structure is as follows:
\begin{lemma}[\cite{Nester2021a}]\label{lem:crossing-swaps}
  For any cell $\alpha$ of $\corner{\A}$ we have
  \[
  \includegraphics[height=1.2cm]{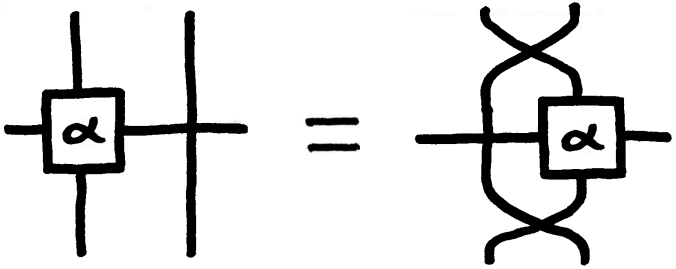}
  \]
\end{lemma}
We recapitulate the proof of this, as we will refer to it later on, when we extend the above lemma to the setting with choice and iteration.
\begin{proof}
  By structural induction on cells of $\corner{\A}$. For the $\circ$-corners we have:
  \[
  \includegraphics[height=1.2cm]{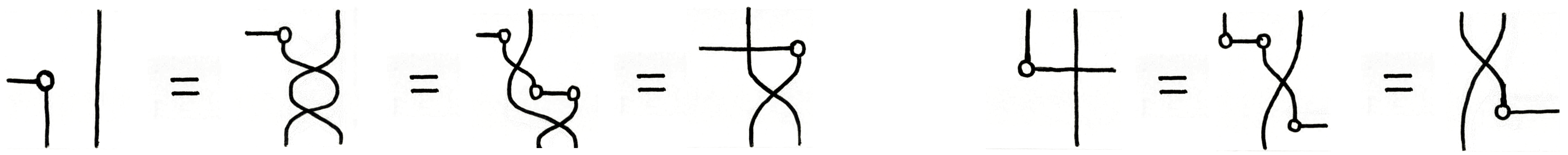}
  \]
  and for the $\bullet$-corners, similarly:
  \[
  \includegraphics[height=1.2cm]{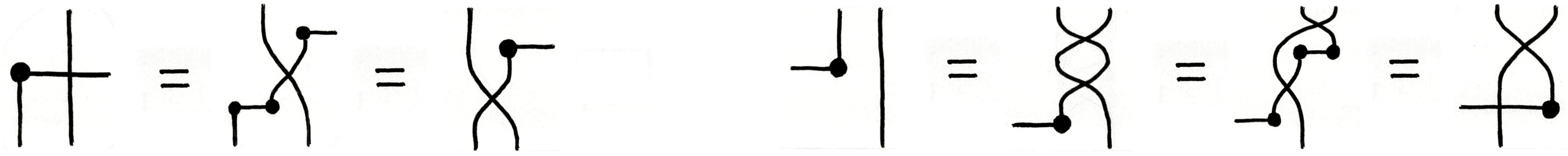}
  \]
  the final base cases are the $\corner{f}$ maps:
  \[
  \includegraphics[height=1.3cm]{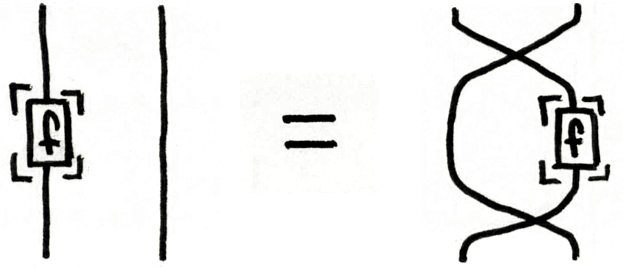}
  \]
  There are two inductive cases. For vertical composition, we have:
  \[
  \includegraphics[height=2.4cm,align=c]{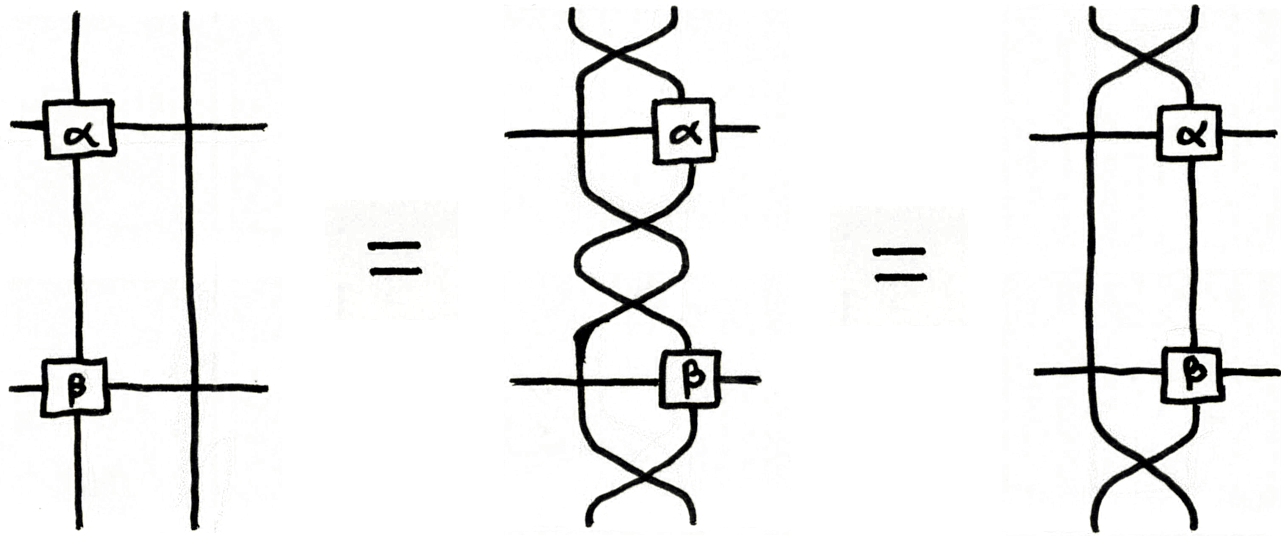}
  \]
  Horizontal composition is similarly straightforward, and the claim follows by induction.
\end{proof}

A particularly interesting consequence of Lemma~\ref{lem:crossing-swaps} is that for any symmetric monoidal category $\A$, $\corner{\A}$ is a monoidal double category in the sense of Shulman~\cite{Shulman2010}. That is, a pseudo-monoid object in the strict 2-category $\bv\mathsf{DblCat}$ of double categories, lax double functors, and vertical transformations. 
\begin{lemma}[\cite{Nester2021a}]\label{lem:monoidaldouble}
  If $\A$ is a symmetric monoidal category then $\corner{\A}$ is a monoidal double category. 
\end{lemma}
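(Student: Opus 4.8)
The plan is to exhibit $\corner{\A}$ as a pseudo-monoid in $\bv\mathsf{DblCat}$ by lifting the (strict) monoidal structure of $\A$. Concretely, I would provide a multiplication double functor $\otimes \colon \corner{\A} \times \corner{\A} \to \corner{\A}$, a unit double functor $J \colon \mathbf{1} \to \corner{\A}$ picking out the unit data, together with associator and unitor vertical transformations, and then discharge the pentagon and triangle coherences. Since the single object, the horizontal edge monoid $(\A_0, \otimes, I)$, and the vertical edge monoid $\ex{\A}$ are all genuine monoids, the tensor is forced and strictly associative and unital on objects and on both kinds of edge. The entire difficulty is therefore concentrated in defining $\otimes$ on cells and verifying that this assignment is functorial.

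On cells, given $\alpha \colon \cells{U}{A}{B}{V}$ and $\beta \colon \cells{U'}{A'}{B'}{V'}$ I must produce a cell $\alpha \otimes \beta \colon \cells{U \otimes U'}{A \otimes A'}{B \otimes B'}{V \otimes V'}$. The naive attempt of whiskering with identities and composing vertically fails to deliver the outer boundaries $U \otimes U'$ and $V \otimes V'$, because the left exchange $U'$ of $\beta$ enters on the far left while $\beta$ itself acts on the right-hand horizontal wires, and dually the right exchange $V$ of $\alpha$ must exit on the far right. This is exactly what the crossing cells are for: I would route $U'$ rightward across the horizontal wires of $\alpha$ and route $V$ rightward across the horizontal wires of $\beta$, using instances of $\chi_{U',-}$ and $\chi_{V,-}$, so that $\alpha$ and $\beta$ can be placed side by side while the outer boundaries emerge as required. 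The unit double functor $J$ sends the unique cell of $\mathbf{1}$ to $\square_I$.

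The key step, and the one I expect to be the main obstacle, is showing that this assignment underlies a lax (indeed pseudo) double functor, i.e. that it respects horizontal and vertical composition of cells up to coherent comparison cells. Here Lemma~\ref{lem:crossing-swaps} does essentially all of the work: the comparison between $(\alpha \otimes \beta) \mid (\gamma \otimes \delta)$ and $(\alpha \mid \gamma) \otimes (\beta \mid \delta)$, and likewise the compatibility with vertical composition, amounts precisely to sliding the relevant cells past the crossing-cell layers used for routing, which is the content of that lemma. The comparison cells themselves are composites of crossing cells; that they are invertible follows from the symmetry of $\A$, which furnishes inverse crossings, so the multiplication is pseudo rather than merely lax. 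The compatibility of the routing with the horizontal tensor and unit on the wires — needed so that the boundaries and whiskerings behave correctly — is supplied by Lemma~\ref{lem:coherent-crossing}.

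It then remains to produce the associator and unitors and to verify coherence. On edges these constraints are identities, since $\A$ and $\ex{\A}$ are strict; on cells they are again built from crossing cells, and their naturality as vertical transformations reduces once more to Lemma~\ref{lem:crossing-swaps}. The pentagon and triangle identities thereby reduce to equalities between diagrams of crossing cells, which I would discharge by the same structural induction on exchanges used in the proof of Lemma~\ref{lem:crossing-swaps}, together with the coherence recorded in Lemma~\ref{lem:coherent-crossing}. Assembling these data and verifications exhibits $\corner{\A}$ as a pseudo-monoid in $\bv\mathsf{DblCat}$, that is, as a monoidal double category.
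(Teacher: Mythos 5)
Your proposal matches the paper's argument: the paper likewise defines $\otimes$ on cells by using crossing cells to route the inner boundaries past each other, obtains pseudofunctoriality with comparison cells given by exchanging the two middle wires (justified by Lemma~\ref{lem:crossing-swaps}), and relies on Lemma~\ref{lem:coherent-crossing} for compatibility with the edge monoids. The only simplification you missed is that, as the paper notes, $\otimes$ is \emph{strictly} associative and unital despite being only pseudo-functorial, so the associator and unitors are identities and the pentagon and triangle coherences are trivial rather than requiring a further induction on crossing-cell diagrams.
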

\begin{proof}
  We give the action of the tensor product on cells:
  \[
    \includegraphics[height=1.7cm]{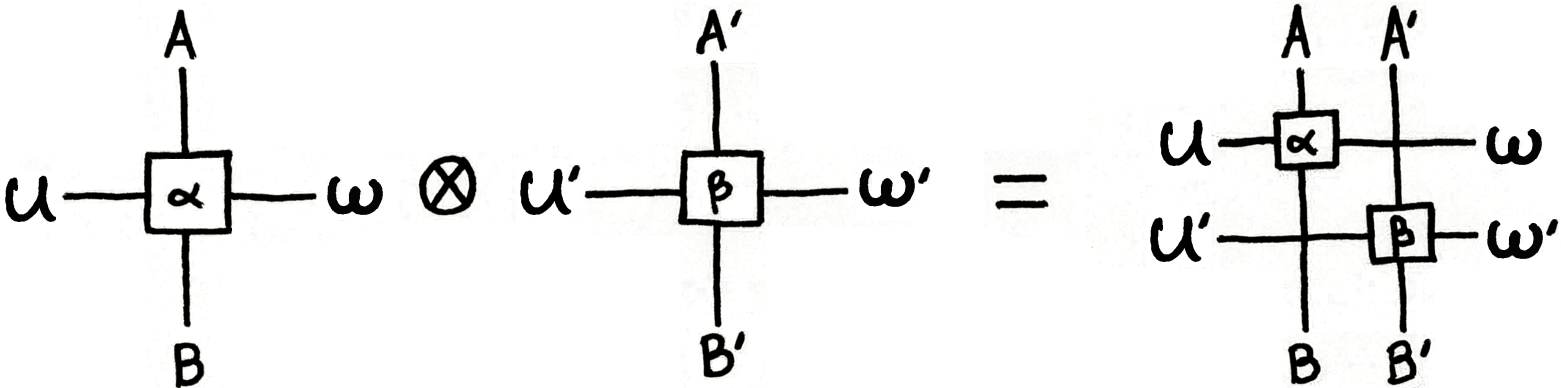}
  \]
  This defines a pseudofunctor, with the component of the required vertical transformation given by exchanging the two middle wires as in:
  \[
    \includegraphics[height=2.5cm]{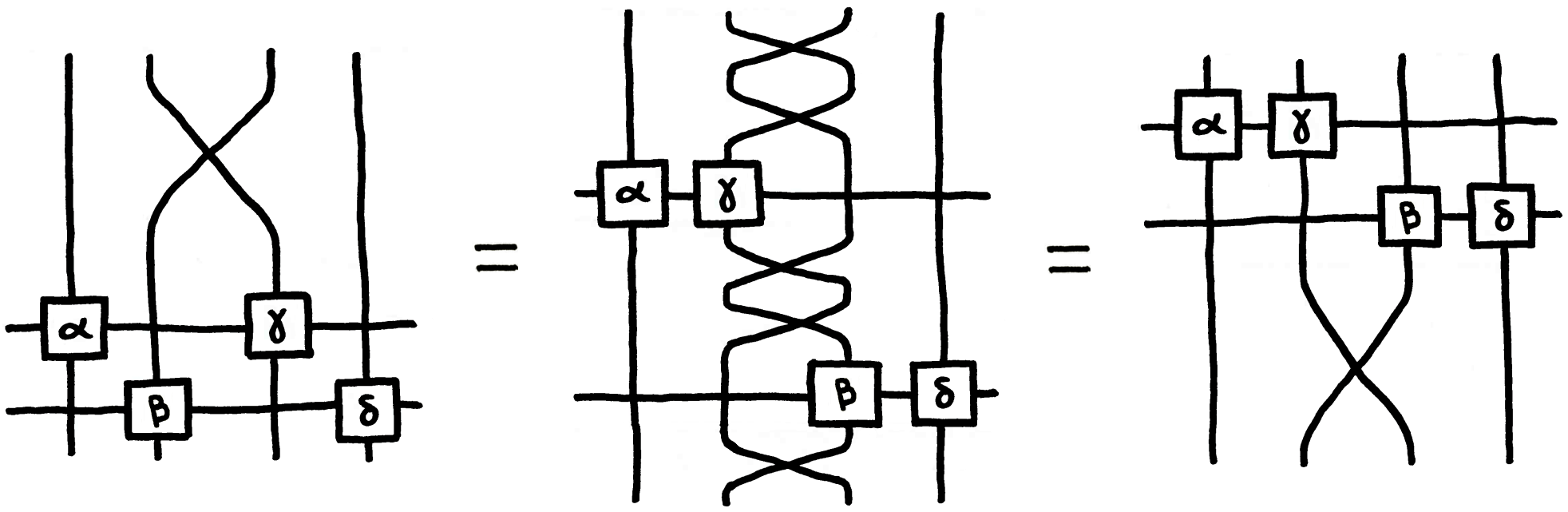}
  \]
  Notice that $\otimes$ is strictly associative and unital, in spite of being only pseudo-functorial. 
\end{proof}

This concludes our treatment of crossing cells in the free cornering. We proceed to give a model of the free cornering that we have recovered from the mathematical wilderness. 
\subsection{A Model: Stateful Transformations}\label{subsec:stateful-transformations}
In this section we construct a single-object double category of \emph{stateful transformations}, named for their resemblance to the \emph{stateful runners} studied by Uustalu in the context of monadic computational effects~\cite{Uustalu2015}. Our interest in the double category of stateful transformations is that it gives a model of the free cornering, exemplifying the corner cells and crossing structure in a more familiar setting. Stateful transformations will serve as a running example throughout our development. First, we require the notion of strong  functor. Recall:
\begin{definition}\label{def:tensorial-strength}
  Let $(\C,\otimes,I)$ be a monoidal category, and let $F : \C \to \C$ be a functor. Then a \emph{tensorial strength for $F$} consists of a natural transformation:
  \[
  \tau^F_{X,Y} : FX \otimes Y \to F(X \otimes Y)
  \]
  satisfying $\tau^F_{X,I} = 1_{FX}$ and also:
  \begin{mathpar}
    \begin{tikzcd}
      FX \otimes Y \otimes Z \ar[d,"\tau^F_{X,Y} \otimes 1_Z"'] \ar[dr,"\tau^F_{X,Y \otimes Z}"] \\
      F(X \otimes Y) \otimes Z \ar[r,"\tau^F_{X \otimes Y,Z}"'] & F(X \otimes Y \otimes Z)
    \end{tikzcd}
  \end{mathpar}
  A \emph{strong functor} $(F,\tau^F) : \C \to \C$ consists of a functor $F : \C \to \C$ together with a tensorial strength $\tau^F$ for $F$. 
\end{definition}
Let $\C^\C_\tau$ be the collection of strong functors $\C \to \C$. Then $\C^\C_\tau$ forms a monoid $(\C^\C_\tau,\circ,I)$. Given two strong functors $(F,\tau^F),(G,\tau^G) : \C \to \C$ we define $(F,\tau^F) \circ (G,\tau^G) = (G \circ F, \tau^{G\circ F})$ where $\tau^{G\circ F}_{X,Y} = \tau^G_{FX,Y}G(\tau^F_{X,Y})$. The unit $I$ is given by the identity functor $1_\C$ with strength $\tau^{1_\C}_{X,Y} = 1_{X \otimes Y}$. We write $F$ instead of $(F,\tau^F)$ when confusion is unlikely. The accompanying notion of natural transformation is:
\begin{definition}\label{def:strong-transformation}
  Let $(\C,\otimes,I)$ be a monoidal category, and let $(F,\tau^F),(G,\tau^G) : \C \to \C$ be strong functors. A \emph{strong natural transformation} $\alpha : (F,\tau^F) \to (G,\tau^G)$ is a natural transformation $\alpha : F \to G$ satisfying:
  \begin{mathpar}
    \begin{tikzcd}
      FX \otimes Y \ar[r,"\alpha_X \otimes 1_Y"] \ar[d,"\tau^F_{X,Y}"'] & GX \otimes Y \ar[d,"\tau^G_{X,Y}"] \\
      F(X \otimes Y) \ar[r,"\alpha_{X\otimes Y}"'] & G(X \otimes Y)
    \end{tikzcd}
  \end{mathpar}
\end{definition}

We will be concerned with strong functors over a cartesian closed category\footnote{In fact, for the purposes of this section it suffices to assume that our category is merely \emph{monoidal closed}. We make the stronger assumption of cartesian closure for continuity with later sections in which it is truly necessary.}  $(\C,\otimes,I)$. We write $X^A$ for the exponential, $\mathsf{ev}^B_A : B^A \otimes A \to A$ for the evaluation maps, and $\lambda[f] : B \to C^A$ for the name of $f : B \otimes A \to C$. We reiterate that here monoidal structure is assumed to be strict, and that this includes the cartesian monoidal structure in cartesian closed categories. Given an object $A$ of $\C$, we define endofunctors $A^{\circ} = (- \otimes A)$ and $A^{\bullet} = (-)^A$ of $\C$. The tensorial strengths for $A^\circ$ and $A^\bullet$ have components as in:
\begin{mathpar}
  \tau^{A^\circ}_{X,Y} = 1_X \otimes \sigma_{A,Y} : X \otimes A \otimes Y \to X \otimes Y \otimes A

  \tau^{A^\bullet}_{X,Y} = \lambda[(1_{X^A} \otimes \sigma_{Y,A})(\mathsf{ev}^A_X \otimes 1_Y)] : X^A \otimes Y \to (X \otimes Y)^A 
\end{mathpar}
Note that $A^{\circ}$ is left adjoint to $A^{\bullet}$.

We may now assemble the double category of stateful transformations:
\begin{definition}\label{def:stateful-transformations}
  Let $(\C,\otimes,I)$ be a cartesian closed category. The single-object double category $\mathsf{S}(\C)$ of \emph{stateful transformations in $\C$} has horizontal edge monoid $(\C_0,\otimes,I)$ given by the cartesian product structure of $\C$, and has vertical edge monoid $(\C^\C,\circ,I)$ the monoid of strong endofunctors on $\C$. The cells $\alpha : \mathsf{S}(\C)\cells{U}{A}{B}{W}$ are strong natural transformations:
  \[
  \alpha : (A^\circ \circ U, \tau^{A^\circ \circ U}) \to (W \circ B^\circ,\tau^{W \circ B^\circ})
  \]
  so in particular the components are of the form:
  \[
  \alpha_X : UX \otimes A \to W(X \otimes B)
  \]
  For horizontal composition, if we have $\alpha : \cells{F}{A}{B}{G}$ and $\beta : \cells{G}{A'}{B'}{H}$ then their horizontal composite $(\alpha \mid \beta) : \cells{F}{A \otimes A'}{B \otimes B'}{H}$ is given by:
  \[
  (\alpha | \beta)_X = (\alpha_X \otimes 1_{A'})(\beta_{X \otimes B}) : FX \otimes A \otimes A' \to H(X \otimes B \otimes B')
  \]
  \noindent and the horizontal identity cells $id_F : \cells{F}{I}{I}{F}$ are given by:   
  \[
  (id_F)_X = 1_{FX} : FX \otimes I = FX \to FX = F(X \otimes I)
  \]

  For vertical composition, if we have $\alpha : \cells{F}{A}{B}{G}$ and $\beta : \cells{H}{B}{C}{K}$ then their vertical composite $\frac{\alpha}{\beta} : \cells{F \circ H}{A}{C}{G \circ K}$ is given by
  \[
  \left(\frac{\alpha}{\beta}\right)_X = \alpha_{HX}G(\beta_X) : F(H(X)) \otimes A \to G(K(X \otimes C))
  \]
  \noindent and the vertical identity cells $1_A : \cells{I}{A}{A}{I}$ are given by:
  \[
  (1_A)_X = 1_{X \otimes A} : 1_\C(X) \otimes A = X \otimes A \to X \otimes A = 1_\C(X \otimes A)
  \]
\end{definition}
We show that this is indeed a double category:
\begin{lemma}
  $\mathsf{S}(\C)$ is well-defined.
\end{lemma}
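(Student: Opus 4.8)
The plan is to verify directly that $\mathsf{S}(\C)$ satisfies each requirement for a single-object double category laid out in Section~\ref{subsec:single-double}. Concretely, I would check in turn that: (1) the horizontal and vertical composites and the identity cells defined above are again strong natural transformations of the indicated type, so that the operations are genuinely defined on cell-sets; (2) horizontal and vertical composition are each associative and unital; (3) the interchange law holds; and (4) the horizontal and vertical identity cells of type $I$ coincide. Before any of this one observes that the stated formulas type-check: for $\alpha : \cells{F}{A}{B}{G}$ and $\beta : \cells{G}{A'}{B'}{H}$ the morphism $(\alpha_X \otimes 1_{A'})(\beta_{X \otimes B})$ really does have domain $FX \otimes A \otimes A'$ and codomain $H(X \otimes B \otimes B')$, and likewise the vertical composite $\alpha_{HX}G(\beta_X)$ has the domain and codomain recorded in the definition.

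For step (1) the naturality of each composite is immediate from the naturality of its components together with bifunctoriality of $\otimes$ and functoriality of the strong functors involved; the substantive part is compatibility with tensorial strength. For the horizontal composite this is a diagram chase in which the strength of the relevant $A^\circ$-type functor is expanded via its definition $\tau^{A^\circ}_{X,Y} = 1_X \otimes \sigma_{A,Y}$ and the composite-strength formula, and then reassembled using the strength conditions satisfied by $\alpha$ and $\beta$ together with the naturality of the symmetry $\sigma$. For the vertical composite one instead uses the explicit formula $\tau^{G \circ F}_{X,Y} = \tau^G_{FX,Y}\,G(\tau^F_{X,Y})$ for the strength of a composite strong functor, feeding the strength condition for $\beta$ through the functor $G$ (so that $G$ carries a commuting strength square to a commuting square) and then composing with the strength condition for $\alpha$. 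The identity cells $id_F$ and $1_A$ are the identity natural transformations on $F$ and on $A^\circ$ respectively, so their strength conditions hold trivially.

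Steps (2)--(4) are then largely bookkeeping. Associativity and unitality of horizontal composition reduce, after unfolding, to equalities of morphisms of $\C$ that follow from naturality of components, functoriality, and the strict unit and associativity laws of $\otimes$; the vertical case follows the same pattern, additionally using the strict associativity of composition of strong functors and the associativity clause built into the composite-strength formula. The interchange law is the one axiom requiring genuine care: unfolding both $\frac{\alpha \mid \gamma}{\beta \mid \delta}$ and $\frac{\alpha}{\beta} \mid \frac{\gamma}{\delta}$ yields two composites of component morphisms, and I would show they agree by repeatedly invoking naturality of $\beta$ and $\delta$ to slide the intervening morphisms past one another, again using bifunctoriality of $\otimes$. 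Finally, since $\otimes$ is strictly unital we have $(id_I)_X = 1_X = (1_I)_X$, so the two unit cells of type $I$ coincide. I expect the compatibility-with-strength verifications in step (1) and the interchange law to be the only places demanding real computation; everything else is forced by functoriality, naturality, and the strictness assumptions on $\C$.
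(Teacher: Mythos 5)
Your proposal is correct and follows essentially the same route as the paper: a direct verification of the double-category axioms, with associativity and unitality reduced to those of natural-transformation composition and the monoidal structure, and with the interchange law singled out as the one computation requiring care, established by sliding morphisms past one another via naturality (the paper's proof isolates exactly the naturality square of $\beta$ at the component $\gamma_X$ as the nontrivial commuting diamond). You are somewhat more explicit than the paper in checking that the composites and identities are again \emph{strong} natural transformations, which the paper subsumes under ``the rest of the requirements are easily seen to hold.''
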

\begin{proof}
  Horizontal and vertical composition are associative and unital because both composition of natural transformations and the cartesian product structure are associative and unital. The rest of the requirements on a double category are easily seen to hold, with the most involved being interchange, which we show holds explicitly. Given $\alpha : \cells{F}{A}{B}{G}$, $\beta : \cells{G}{A'}{B'}{H}$, $\gamma : \cells{F'}{B}{C}{G'}$ and $\delta : \cells{G'}{B'}{C'}{H'}$, then the interchange law $(\frac{\alpha}{\gamma} \vert \frac{\beta}{\delta}) = (\frac{\alpha \vert \beta}{\gamma \vert \delta})$ holds as follows:
  \[
  \xymatrix@C=0em{
    &
    & 	F(F'(X)) \otimes A \otimes A' 
    \ar^{\alpha_{F'(X)}\otimes A'}[d] 
    \ar^{(\alpha | \beta)_{F'X}}@/^1.5pc/[ddr] 
    \ar_{(\frac{\alpha}{\gamma})_X \otimes A'}@/_1.5pc/[ddl]
    \ar `r[rr] `[dddd]^-{(\frac{\alpha | \beta}{\gamma | \delta})_X}  [dddd]
    \ar `l[ddll] `[dddd]_{(\frac{\alpha}{\gamma} | \frac{\beta}{\delta})_X}  [dddd]
    &	& \\
    &	& 	G(F'(X) \otimes B) \otimes A' 
    \ar^{G(\gamma_X) \otimes A'}[dl] 
    \ar_{\beta_{F'X \otimes B}}[dr] 
    &	& \\
    &	G(G'(X \otimes C)) \otimes A'
    \ar^{\beta_{G'(X \otimes C)}}[dr]
    \ar_{(\frac{\beta}{\delta})_{X \otimes C}}@/_1.5pc/[ddr]
    & 	\text{}
    & 	H(F'(X) \otimes B \otimes B')
    \ar_{H(\gamma_X \otimes B')}[dl]
    \ar^{H(\gamma | \delta)}@/^1.5pc/[ddl] 
    & \\
    &	& 	H(G'(X \otimes C) \otimes B') 
    \ar^{H(\delta_{X \otimes C})}[d]
    &	& \\
    &	& 	H(H'(X \otimes C \otimes C')) 
    &	&
  }
  \]
  where the middle diamond commutes by naturality of $\beta$ and the rest of the diagram is obtained by unfolding definitions. 
\end{proof}

A first observation concerning $\mathsf{S}(\C)$ is that for each $f : A \to B$ of $\C$ there is a cell $[f] : \mathsf{S}(\C)\cells{I}{A}{B}{I}$ given by $[f]_X : (1_X \otimes f) : X \otimes A \to X \otimes B$, and that this defines an embedding $[-] : \C \to \bv\,\mathsf{S}(\C)$. Moreover, the category of strong endofunctors of $\C$ and strong natural transformations embeds into $\bh\,\mathsf{S}(\C)$, since a strong natural transformation $\alpha : (F,\tau^F) \to (G,\tau^G)$ is equivalently a cell $\alpha : \mathsf{S}(\C)\cells{F}{I}{I}{G}$. 

We define $\circ$-corner cells $\putr^A : \mathsf{S}(\C)\cells{I}{A}{I}{A^\circ}$ and $\getl^A : \mathsf{S}(\C)\cells{A^\circ}{I}{A}{I}$ to have identity maps as components, as in:
\begin{mathpar}
  (\putr^A)_X = 1_{X \otimes A} : 1_\C(X) \otimes A = X \otimes A \to  X \otimes A =  A^{\circ}(X \otimes I)

  (\getl^A)_X = 1_{X \otimes A} :  A^{\circ}(X) \otimes I = X \otimes A \to X \otimes A = 1_\C(X \otimes A)
\end{mathpar}
That the yanking equations hold is immediate. Next, we define $\bullet$-corner cells $\putl^A : \mathsf{S}(\C)\cells{A^\bullet}{A}{I}{I}$ and $\getr^A : \mathsf{S}(\C)\cells{I}{I}{A}{A^\bullet}$ using the closed structure, as in:

\begin{mathpar}
(\putl^A)_X = \ev^A_X : A^{\bullet}(X) \otimes A = X^A \otimes A \to X = 1_\C(X \otimes I)
  
(\getr^A)_X = \lambda[1_{X \otimes A}] : 1_\C(X) \otimes I = X \to (X \otimes A)^A = A^{\bullet}(X \otimes A)
\end{mathpar}
In other words, $(\putl^A)_X : A^{\circ}(A^{\bullet}X) \to X$ is and $(\getr^A)_X : X \to A^{\bullet}(A^{\circ}X)$ are the unit and counit of the adjunction $A^{\circ} \dashv A^{\bullet}$ given by the cartesian closed structure. The yanking equations are then the triangle equations of this adjunction. Explicitly:
\begin{mathpar}
  (\getr^A \mid \putl^A)_X = (\lambda[1_{X \otimes A}] \otimes 1_A) \ev^A_{X \otimes A} = 1_{X \otimes A} = (id_{A^\bullet})_X
  
  \left(\frac{\getr^A}{\putl^A}\right)_X = \lambda[1_{X^A \otimes A}] (\ev^A_X)^A = \lambda[1_{X^A \otimes A} \ev^A_X] = 1_{X^A} = (1_A)_X
\end{mathpar}
That the corner cells constitute strong natural transformations is straightforward, if slightly tedious, to verify. It follows that $\mathsf{S}(\C)$ is a proarrow equipment.

The fact that $\mathsf{S}(\C)$ is constructed from \emph{strong} functors is closely connected to the nature of crossing cells there. Given a strong functor $(F,\tau^F) : \C \to \C$ and an object $A$ of $\C$ the tensorial strength of $F$ defines a crossing cell $\chi_{F,A} : \mathsf{S}(\C)\cells{F}{A}{A}{F}$ with components $(\chi_{F,A})_X = \tau^F_{X,A} : FX \otimes A \to F(X \otimes A)$. These crossing cells are coherent with respect to both horizontal and vertical composition in $\mathsf{S}(\C)$ in the sense that:
\begin{mathpar}
  \chi_{F,I} = id_F

  \chi_{F,A \otimes B} = \chi_{F,A} \mid \chi_{F,B}

  \chi_{I,A} = 1_A

  \chi_{F \circ G,A} = \frac{\chi_{F,A}}{\chi_{G,A}}
\end{mathpar}
The crossing cells $\chi_{B^\circ,A}$ and $\chi_{B^\bullet,A}$ obtained in this manner are equal to those defined in terms of the corner cells and braiding, as in Definition~\ref{def:crossing-cells}.

Moreover, the equation from Lemma \ref{lem:crossing-swaps} concerning crossing cells:
\[
\includegraphics[height=1.2cm]{figs/crossing-swaps.png}
\]
holds for all cells $\alpha$ of $\mathsf{S}(\C)$ precisely because cells $\alpha$ are required to be \emph{strong} natural transformations. In particular, this means that $\mathsf{S}(\C)$ is also a monoidal double category.

We have seen that the double category of stateful transformations has corner and crossing cells, much as the free cornering does. We have previously mentioned that stateful transformations give a \emph{model} of the free cornering. What we mean by this is that there is a structure-preserving double functor from the free cornering of a cartesian closed category into the associated category of stateful transformations. That is, we have:
\begin{lemma}\label{lem:doublefunctor}
  Let $(\C,\otimes,I)$ be a cartesian closed category. Then we have:
  \begin{enumerate}[(i)]
  \item $\mathsf{S}(\C)$ is a proarrow eqipment.
  \item $\mathsf{S}(\C)$ is a monoidal double category. 
  \item There is a double functor $D : \corner{\C} \to \mathsf{S}(\C)$ defined as follows: on the horizontal edge monoid $D$ acts as the identity; on the vertical edge monoid $D$ sends $A^\circ$ and $A^\bullet$ in $\ex{\C}$ to the strong functors $A^\circ$ and $A^\bullet$ in $\C^\C_\tau$ detailed above, and is otherwise defined as in $D(I) = I$ and $D(U \otimes W) = D(U) \otimes D(W)$; on cells $D$ acts on the morphisms $\corner{f}$ as in $D(\corner{f}) = [f]$, and on the corner cells as in:
  \begin{mathpar}
    D(\putr^A) = \putr^A

    D(\getl^A) = \getl^A
    \\
    D(\putl^A) = \putl^A

    D(\getr^A) = \getr^A
  \end{mathpar}
    Moreover, $D : \corner{\C} \to \mathsf{S}(\C)$ preserves the proarrow equipment structure and monoidal double category structure, in the sense that it maps corner cells and crossing cells in $\corner{\C}$ to corner cells and crossing cells in $\mathsf{S}(\C)$. 
  \end{enumerate}
\end{lemma}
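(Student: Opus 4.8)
The plan is to dispatch parts (i) and (ii) using the structure already exhibited in the preceding discussion, and to devote the bulk of the argument to the construction of $D$ and its preservation of structure in part (iii). For (i), recall that in the single-object setting a proarrow equipment amounts to equipping each vertical edge with companion and conjoint data, which here is precisely the four families of corner cells $\getl^A$, $\putr^A$, $\getr^A$, $\putl^A$ together with the yanking equations. These have already been defined on $\mathsf{S}(\C)$ — the $\circ$-corners with identity components, the $\bullet$-corners as the unit and counit of the adjunction $A^\circ \dashv A^\bullet$ — and the yanking equations verified (immediately for the $\circ$-corners, and as the triangle identities of the adjunction for the $\bullet$-corners), so (i) follows. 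For (ii), I would observe that the tensorial strength of each strong endofunctor supplies crossing cells $\chi_{F,A}$ coherent with respect to both horizontal and vertical composition, and that the analogue of Lemma~\ref{lem:crossing-swaps} holds in $\mathsf{S}(\C)$ exactly because its cells are \emph{strong} natural transformations. Repeating the argument of Lemma~\ref{lem:monoidaldouble} with these crossing cells then exhibits $\mathsf{S}(\C)$ as a monoidal double category.

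The heart of the proof is (iii). Since $\corner{\C}$ is the \emph{free} double category on the stated generators and equations (see~\cite{Fio08}), it enjoys a universal property: to define a double functor out of it, it suffices to assign targets to the generating edges and cells and to check that the generating equations of $\corner{\C}$ become valid equations in $\mathsf{S}(\C)$ under that assignment. On the edge monoids the assignment respects the free monoid $\ex{\C}$ automatically, so I would concentrate on the generating equations for cells. The equations $[fg] = \frac{[f]}{[g]}$, $[1_A] = 1_A$, and $[f \otimes g] = [f] \mid [g]$ witnessing functoriality of the embedding $[-]$ reduce to short computations with the horizontal and vertical composition formulas of $\mathsf{S}(\C)$, while the images of the yanking equations are already known to hold there. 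The universal property then produces a unique strict double functor $D$ extending the assignment, and preservation of all composites and identities comes for free.

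Finally, I would verify that $D$ preserves the relevant structure. Preservation of corner cells is immediate, since $D$ is defined to send each corner cell to the corresponding corner cell of $\mathsf{S}(\C)$. Preservation of crossing cells, namely $D(\chi_{U,A}) = \chi_{D(U),A}$, I would establish by structural induction on $U$ following Definition~\ref{def:crossing-cells}: the base cases $U = A^\circ$ and $U = A^\bullet$ hold because the strength-defined crossing cells $\chi_{B^\circ,A}$ and $\chi_{B^\bullet,A}$ coincide with those built from the corner cells and braiding, as already noted; the case $U = I$ gives $1_A$ on both sides; and the composite case $U \otimes W$ follows from the coherence law $\chi_{F \circ G,A} = \frac{\chi_{F,A}}{\chi_{G,A}}$ in $\mathsf{S}(\C)$ together with the fact that $D$ preserves vertical composition. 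I expect this induction to be the main obstacle, as it is where the two genuinely different descriptions of crossing cells — the inductive one in $\corner{\C}$ and the strength-based one in $\mathsf{S}(\C)$ — must be reconciled; everything else amounts to routine unfolding of the definitions of composition in $\mathsf{S}(\C)$.
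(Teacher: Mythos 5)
Your proposal is correct and follows essentially the same route as the paper, which states this lemma without a written proof precisely because parts (i) and (ii) are established in the preceding discussion of corner and crossing cells in $\mathsf{S}(\C)$, and part (iii) follows from the universal property of the free double category together with the short computations and the induction on $U$ that you describe. One minor terminological slip: the proarrow equipment structure equips each \emph{horizontal} edge $A$ with a companion $A^\circ$ and conjoint $A^\bullet$ among the vertical edges, not the other way around --- but the four families of corner cells and yanking equations you point to are exactly the right data.
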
\qed

\begin{remark}\label{rem:effects-interpretation}
Strong functors, and in particular strong monads, are often used to model computational effects (see e.g.,~\cite{Moggi91,Plotkin01}). For example $A^\bullet$ is sometimes called the \emph{reader monad}: arrows $f : B \to C$ in the Kleisli category are given by arrows $f : B \to C^A = A^\bullet(C)$, which are understood as arrows $B \to C$ that \emph{read input} of type $A$. This input is understood to be provided by the environment of the program. Moreover, $A^\circ$ is the \emph{writer comonad}, $A^\bullet \circ A^\circ$ is the \emph{state monad}, and $A^\circ \circ A^\bullet$ is the \emph{store comonad}.

One way to think of cells $\alpha$ of $\mathsf{S}(\C)$ from the perspective of computational effects is as follows: the right boundary of $\alpha$ represents its \emph{environment}, that is, the context in which $\alpha$ executes. For example if the right boundary of $\alpha$ is $A^\bullet$ then $\alpha$ will read a value (supplied by the environment). The left boundary of $\alpha$ represents the \emph{interior} of $\alpha$, in the sense that $\alpha$ acts as the environment of its interior. For example if the left boundary of $\alpha$ is $A^\bullet$ then the interior of $\alpha$ will read a value, which $\alpha$ must supply. Here effects are understood to be triggered from the left, propagating outwards until resolved. 
\end{remark}

This concludes our discussion of $\mathsf{S}(\C)$ for the time being. We proceed to discuss the addition of choice to the free cornering.
\section{Adding Choice to the Free Cornering}\label{sec:corner-choice}
In this section we extend the free cornering of a symmetric monoidal category with a notion of protocol choice. In addition to symmetric monoidal structure we will require the base category to have distributive binary coproducts, which we review in Section~\ref{subsec:distributive-branching}. We also discuss the way in which this sort of category can be seen as an algebra of sequential branching programs. In Section~\ref{subsec:free-choice} we construct the \emph{free cornering with choice} over a suitable base and discuss its interpretation. In Section~\ref{subsec:choice-properties} we establish a number of elementary properties of the free cornering with choice. In Section~\ref{subsec:choice-crossing} we define crossing cells in the free cornering with choice, and show that they are well-behaved. This is mostly an extension of Section~\ref{subsec:crossing-cells} to the new setting, with the exception of Lemma~\ref{lem:coherent-oplus-choice}. Finally, in Section~\ref{subsec:stateful-choice} we show that with additional assumptions on the base category the double category of stateful transformations from Section~\ref{subsec:stateful-transformations} gives a model of the free cornering with choice. 

\subsection{Distributive Monoidal Categories and Branching Programs}\label{subsec:distributive-branching}
We begin by recapitulating the notion of a category with binary coproducts, largely in order to establish our notation for them:
\begin{definition}
  A category $\A$ is said to \emph{have binary coproducts} in case for each pair $A,B$ of objects of $\A$ there is is an object $A \oplus B$ of $\A$ together with morphisms $\sigma^{A,B}_0 : A \to A \oplus B$ and $\sigma^{A,B}_1 : B \to A \oplus B$ such that for any pair of morphims $f : A \to C$ and $g : B \to C$ there exists a unique morphism $[f,g] : A \oplus B \to C$ with the property that $\sigma^{A,B}_0[f,g] = f$ and $\sigma_1^{A,B}[f,g] = g$. We call $A \oplus B$ the \emph{coproduct of $A$ and $B$}, and call $[f,g]$ the \emph{copairing of $f$ and $g$}. We write $\sigma_0^{A,B} = \sigma_0$ and $\sigma_1^{A,B} = \sigma_1$ when it is unlikely to result in confusion. Note that a category with binary coproducts need not have an initial object.
\end{definition}

Next, we recall the notion of a distributive monoidal category, being a monoidal category with distributive binary coproducts:
\begin{definition}
  A \emph{distributive monoidal category} $(\A,\otimes,\oplus,I)$ is a symmetric monoidal category $(\A,\otimes,I)$ with binary coproducts $A \oplus B$ such that $\otimes$ distributes over $\oplus$. That is, for all objects $A,B,C$ of $\A$ the arrow  $\mu^r = [(\sigma_0 \otimes 1_C),(\sigma_1 \otimes 1_C)] : (A \otimes C) \oplus (B \otimes C) \to (A \oplus B) \otimes C$ has an inverse $\delta^r : (A \oplus B) \otimes C \to (A \otimes C ) \oplus (B \otimes C)$. Diagrammatically:
    \begin{mathpar}
      \includegraphics[height=1.6cm,align=c]{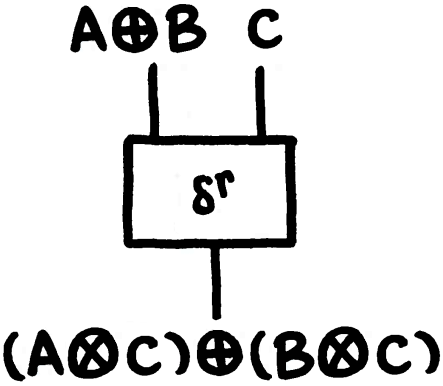}

      \includegraphics[height=1.6cm,align=c]{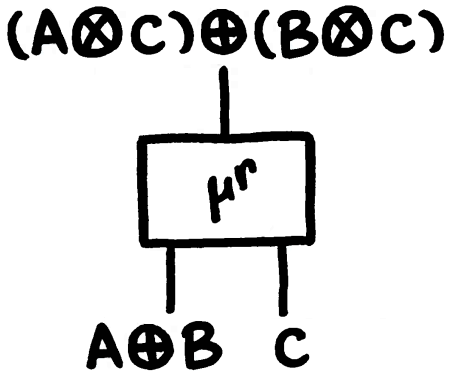}
    \end{mathpar}
Note that in any distributive monoidal category there is necessarily an inverse $\delta^l : C \otimes (A \oplus B) \to (C \otimes A) \oplus (C \otimes B)$ to the arrow $\mu^l = [(1_C \otimes \sigma_0),(1_C \otimes \sigma_1)] : (C \otimes A) \oplus (C \otimes B) \to C \otimes (A \oplus B)$. 
\end{definition}

The resource-theoretic understanding of symmetric monoidal categories extends to distributive monoidal categories, with $A \oplus B$ understood as the collection consisting of the contents of $A$ \emph{or} the contents of $B$. Notice that this interpretation is only really coherent in the presence of distributivity. 

Another way to understand distributive monoidal categories is that they model \emph{branching programs}. In any distributive monoidal category we may define an object $\mathsf{Bool} = I \oplus I$ of booleans with elements $\top = \sigma_0 : I \to \mathsf{Bool}$ and $\bot = \sigma_1 : I \to \mathsf{Bool}$ given by the coproduct injections. Then for any $f,g : A \to B$ the morphism $\delta^r[f,g] : \mathsf{Bool} \otimes A \to B$ models the conditional statement:
\[\texttt{if } b \texttt{ then } f(x) \texttt{ else } g(x)\]
In particular we have both of (see Lemma~\ref{lem:injections-distribute}):
\begin{mathpar}
  (\top \otimes 1_A)\delta^r[f,g] = \sigma_0^{A,A}[f,g] = f

  (\bot \otimes 1_A)\delta^r[f,g] = \sigma_1^{A,A}[f,g] = g
\end{mathpar}
which we should think of as program equivalences:
\begin{mathpar}
  \texttt{if } \top \texttt{ then } f(x) \texttt{ else } g(x) = f(x)

  \texttt{if } \bot \texttt{ then } f(x) \texttt{ else } g(x) = g(x)
\end{mathpar}

Before moving on we record a useful fact about coproduct injections in distributive monoidal categories for later use:
\begin{lemma}\label{lem:injections-distribute}
  In distributive monoidal category:
  \begin{mathpar}
    (\sigma_0^{A,B} \otimes 1_C)\delta^r = \sigma_0^{A \otimes C,B \otimes C}

    \text{and}

    (\sigma_1^{A,B} \otimes 1_C)\delta^r = \sigma_1^{A\otimes C,B \otimes C}
  \end{mathpar}
\end{lemma}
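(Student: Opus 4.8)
The plan is to obtain both equations directly from the universal property of the coproduct together with the fact that $\delta^r$ is, by definition, the two-sided inverse of $\mu^r$. There is essentially no content beyond bookkeeping with the diagrammatic composition order, so I expect no genuine obstacle; the proof should be a two-line calculation in each case.

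First I would unfold the definition of $\mu^r = [(\sigma_0 \otimes 1_C),(\sigma_1 \otimes 1_C)] : (A \otimes C) \oplus (B \otimes C) \to (A \oplus B) \otimes C$. Applying the defining equations of the copairing to the coproduct injections $\sigma_0^{A \otimes C, B \otimes C}$ and $\sigma_1^{A \otimes C, B \otimes C}$ into $(A \otimes C) \oplus (B \otimes C)$, we get $\sigma_0^{A \otimes C, B \otimes C}\mu^r = \sigma_0^{A,B} \otimes 1_C$ and $\sigma_1^{A \otimes C, B \otimes C}\mu^r = \sigma_1^{A,B} \otimes 1_C$.

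Next I would post-compose each of these identities with $\delta^r$ on the right. Since $\delta^r$ is the inverse of $\mu^r$ we have $\mu^r\delta^r = \id$, so the left-hand sides collapse: for the first equation, $\sigma_0^{A \otimes C, B \otimes C}\mu^r\delta^r = \sigma_0^{A \otimes C, B \otimes C}$, while the right-hand side becomes $(\sigma_0^{A,B} \otimes 1_C)\delta^r$. Reading off the resulting equality gives precisely the first claim, $(\sigma_0^{A,B} \otimes 1_C)\delta^r = \sigma_0^{A \otimes C, B \otimes C}$, and the argument for the second claim is verbatim the same with every subscript $0$ replaced by $1$.

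The only subtlety worth flagging is to keep the composition order straight: because composition is written diagrammatically, ``post-composing with $\delta^r$'' appends $\delta^r$ on the right, and the cancellation relies on $\mu^r\delta^r = \id$ rather than on $\delta^r\mu^r = \id$. No induction is needed, and distributivity enters only through the bare existence of $\delta^r = (\mu^r)^{-1}$, so nothing else is required.
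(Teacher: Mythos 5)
Your proof is correct and follows exactly the same route as the paper's: apply the copairing equations to get $\sigma_0^{A\otimes C,B\otimes C}\mu^r = \sigma_0^{A,B}\otimes 1_C$, then post-compose with $\delta^r$ and cancel via $\mu^r\delta^r = \id$. No differences worth noting.
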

\begin{proof}
  We have:
  \begin{align*}
    & \sigma_0^{A\otimes C,B \otimes C}\mu^r
    = \sigma_0^{A\otimes C,B \otimes C}[(\sigma_0^{A,B} \otimes 1_C),(\sigma_1^{A,B} \otimes 1_C)]
    = \sigma_0^{A,B} \otimes 1_C
  \end{align*}
  It follows immediately that:
  \begin{align*}
    & \sigma_0^{A \otimes C,B \otimes C}
    = \sigma_0^{A \otimes C,B \otimes C}\mu^r\delta^r
    = (\sigma_0^{A,B} \otimes 1_C)\delta^r
  \end{align*}
  Similarly, we have $(\sigma_1^{A,B} \otimes 1_C)\delta^r = \sigma_1^{A\otimes C,B \otimes C}$.
\end{proof}

Distributive monoidal categories are also a good place to model datatypes. For example, if $\A$ is distributive monoidal and $A$ is an object of $\A$, then we may model \emph{stacks of type $A$} as an object $S_A$ of $\A$ equipped with an isomorphism $S_A \cong I \oplus (A \otimes S_A)$ with components $\mathsf{pop} : S_A \to I \oplus (A \otimes S_A)$ and $[\mathsf{nil},\mathsf{push}] : I \oplus (A \otimes S_A) \to S_A$. Then for example the object $S_I$ of stacks of type $I$ is a model of the natural numbers with $\mathsf{nil} = \mathsf{zero}$ and $\mathsf{push} = \mathsf{succ}$. See~\cite{Walters1989} for a more in-depth discussion. 

\subsection{The Free Cornering With Choice}\label{subsec:free-choice}
In this section we extend the free cornering of a monoidal category with a notion of protocol choice. We begin by extending the monoid of exchanges (Definition~\ref{def:monoid-exchanges}) with binary operations $-+-$ and $-\times-$ representing branching protocols:
\begin{definition}\label{def:choice-exchanges}
  Let $\A$ be a symmetric monoidal category. The monoid $\ex{\A}_\oplus$ of \emph{$\A$-valued exchanges with choice} has elements generated by:
  \begin{mathpar}
    \inferrule{A \in \A_0}{A^\circ \in \ex{\A}_\oplus}
    
    \inferrule{A \in \A_0}{A^\bullet \in \ex{\A}_\oplus}

    \inferrule{\text{}}{I \in \ex{\A}_\oplus}

    \inferrule{U \in \ex{\A}_\oplus \\ W \in \ex{\A}_\oplus}{U \otimes W \in \ex{\A}_\oplus}
   
    \inferrule{U \in \ex{\A}_\oplus \\ W \in \ex{\A}_\oplus}{U \times W \in \ex{\A}_\oplus}
    
    \inferrule{U \in \ex{\A}_\oplus \\ W \in \ex{\A}_\oplus}{U + W \in \ex{\A}_\oplus}
  \end{mathpar}
  subject to the following equations:
  \begin{mathpar}
  I \otimes U = U

  U \otimes I = U

  (U \otimes W) \otimes V = U \otimes (W \otimes V)
  \end{mathpar}
\end{definition} 
We extend the interpretation of $\ex{\A}$ from Section~\ref{subsec:free-cornering} to an interpretation of $\ex{\A}_\oplus$, interpreting $-+-$ and $-\times-$ as \emph{choices}. Specifically, For any $U,W \in \ex{\A}_\oplus$ we interpret $U + W \in \ex{\A}_\oplus$ as an exchange which begins with the \emph{left} participant choosing whether the rest of the exchange will be of the form $U$ or of the form $W$, after which the exchange proceeds according to this choice. Dually, we interpret $U \times W \in \ex{\A}_\oplus$ in the same way, except that the \emph{right} participant chooses instead of the left participant. 

For example, suppose $A,B \in \A_0$. For each of the following exchanges, call the left participant $\alice$ and the right participant $\bob$, as before. Now, consider:
\begin{itemize}
\item To carry out $A^\circ \times A^\bullet$, first $\bob$ chooses which of $A^\circ$ and $A^\bullet$ will happen. If $\bob$ chooses $A^\circ$ then $\alice$ sends him an instance of $A$ and the exchange ends. If $\bob$ chooses $A^\circ$ then he sends $\alice$ an instance of $A$ and the exchange ends.

\item To carry our $(A^\circ \times A^\bullet) \otimes B^\bullet$, first $\alice$ and $\bob$ carry out $A^\circ \times A^\bullet$ as above, and then $\bob$ gives $\alice$ an instance of $B$. 
  
\item To carry out $A^\bullet + I$, first $\alice$ chooses which of $A^\bullet$ and $I$ will happen. If $\alice$ chooses $A^\bullet$ the $\bob$ sends her an instance of $A$ and the exchange ends. If $\alice$ chooses $I$ then the exchange ends immediately. 
\item To carry out $A^\circ + (A^\circ \times B^\circ) \in \ex{\A}_\oplus$, first $\alice$ chooses which of $A^\circ$ and $(A^\circ \times B^\circ)$ will happen. If $\alice$ chooses $A^\circ$, then she sends $\bob$ an instance of $A$ and the exchange ends. If $\alice$ chooses $A^\circ \times B^\circ$, then next $\bob$ chooses which of $A^\circ$ and $B^\circ$ will happen. If $\bob$ chooses $A^\circ$ then $\alice$ sends him an instance of $A$ and the exchange ends. If $\bob$ chooses $B^\circ$ then instead $\alice$ sends an instance of $B$ and the exchange ends. 
\end{itemize}

We proceed to extend the rest of the free cornering construction with choice:
\begin{definition}\label{def:free-cornering-choice}
  Let $\A$ be a distributive monoidal category. We define the \emph{free cornering with choice} of $\A$, written $\corner{\A}^{\oplus}$, to be the free single-object double category with horizontal edge monoid $(\A_0,\otimes,I)$, vertical edge monoid $\ex{\A}_\oplus$, and generating cells and equations consisting of:
  \begin{itemize}
  \item The generating cells and equations of $\corner{\A}$ (Definition~\ref{def:free-cornering}).
  \item For each $U,W \in \ex{\A}_\oplus$, horizontal projection cells $\pi_0 : \cells{U \times W}{I}{I}{U}$ and $\pi_1 : \cells{U \times W}{I}{I}{W}$. Further, for each pair of cells $\alpha \in \corner{\A}^\oplus\cells{V}{A}{B}{U}$ and $\beta \in \corner{\A}^\oplus\cells{V}{A}{B}{W}$ a unique cell $\alpha \times \beta \in \corner{\A}^\oplus\cells{V}{A}{B}{U \times W}$ satisfying:
    \begin{mathpar}
      \includegraphics[height=1.2cm,align=c]{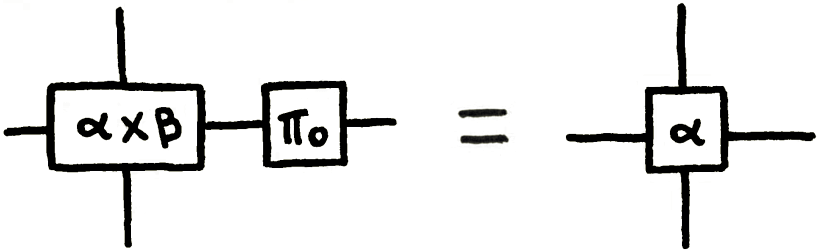}

      \includegraphics[height=1.2cm,align=c]{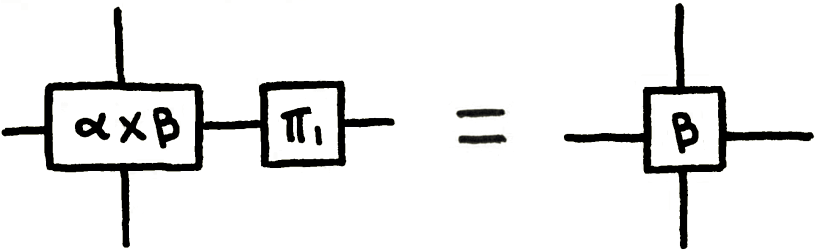}
    \end{mathpar}
  \item Dually, for each $U,W \in \ex{\A}_\oplus$, horizontal injection cells $\ip_0 : \cells{U}{I}{I}{U + W}$ and $\ip_1 : \cells{W}{I}{I}{U+W}$. Further, for each pair of cells $\alpha \in \corner{\A}^\oplus\cells{U}{A}{B}{V}$ and $\beta \in \corner{\A}^\oplus\cells{W}{A}{B}{V}$ a unique cell $\alpha + \beta \in \corner{\A}^\oplus\cells{U+W}{A}{B}{V}$ satisfying:
    \begin{mathpar}
      \includegraphics[height=1.2cm,align=c]{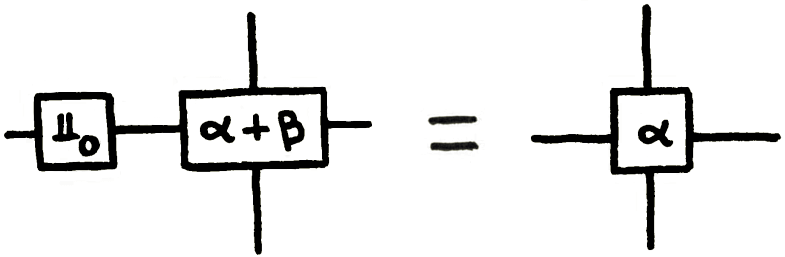}

      \includegraphics[height=1.2cm,align=c]{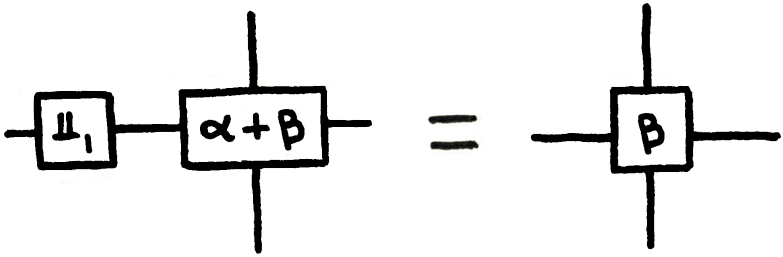}
    \end{mathpar}
  \item Finally, for each pair of cells $\alpha : \corner{\A}^\oplus\cells{U}{A}{C}{W}$ and $\beta : \corner{\A}^\oplus\cells{U}{B}{C}{W}$ a unique cell $[ \alpha , \beta ] : \corner{\A}^\oplus\cells{U}{A \oplus B}{C}{W}$ satisfying:
  \begin{mathpar}
  \includegraphics[height=1.2cm,align=c]{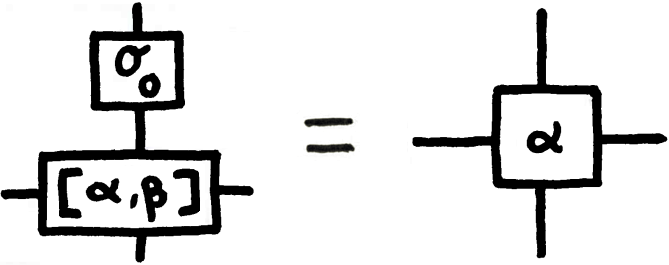}

  \includegraphics[height=1.2cm,align=c]{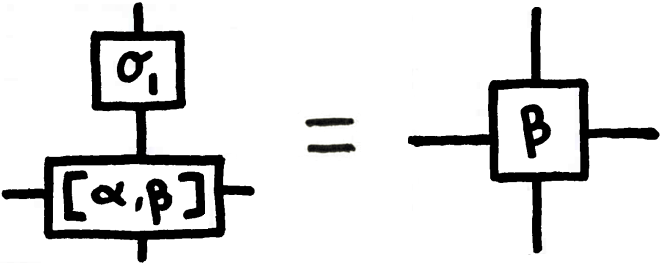}
  \end{mathpar}
  where $\corner{\sigma_0}$ and $\corner{\sigma_1}$ are given by the coproduct injections in $\A$.
\end{itemize}
\end{definition}

We extend our interpretation of cells of $\corner{\A}$ as interacting processes to cells of $\corner{\A}^\oplus$. Recall that $U \times W$ is the exchange in which the right participant chooses whether the exchange will be of the form $U$ or $W$. The projection cells $\pi_0 : \cells{U \times W}{I}{I}{U}$ and $\pi_1 : \cells{U \times W}{I}{I}{W}$ allow a cell, acting as the right participant in the exchange $U \times W$, to \emph{make} such choices. The corresponding cells $\alpha \times \beta : \cells{V}{A}{B}{U \times W}$ allow a cell, acting as the left participant in the exchange $U \times W$, to \emph{react} to such choices by specifying a response to each of the two possible choices. Similarly, $U + W$ is the exchange in which the left participant chooses whether the exchange will be of the form $U$ or $W$. The injections allow a cell, acting as the left participant, to make such choices, and the corresponding cells $\alpha + \beta : \cells{U+W}{A}{B}{V}$ to react to such choices by specifying a response to each possibility.

\begin{example}
  Suppose $\A$ contains a morphism $\texttt{bake} : \texttt{dough} \otimes \texttt{oven} \to \texttt{bread} \otimes \texttt{oven}$ (as in Section~\ref{subsec:free-cornering}), and define:
\[
\texttt{react} = \left(\getl^\texttt{bread} \mid 1_C\right) + \left(\frac{\getl^\texttt{dough} \mid 1_C}{\texttt{bake}}\right) : \floatcells{\texttt{bread}^\circ + \texttt{dough}^\circ}{\texttt{oven}}{\texttt{bread} \otimes \texttt{oven}}{I}
\]
Then $\texttt{react}$ describes a procedure for obtaining $\texttt{bread}$, assuming one posesses an $\texttt{oven}$, by participating in an exchange along the left boundary in which the counterparty chooses whether to supply $\texttt{bread}$ or $\texttt{dough}$. If they choose to supply $\texttt{bread}$ (via $\ip_0$), then the bread has been obtained. If they choose to supply $\texttt{dough}$ (via $\ip_1$), then we instead $\texttt{bake}$ the $\texttt{dough}$ to obtain $\texttt{bread}$. So for example:
\begin{mathpar}
  (\putr^\mathsf{bread} \mid \ip_0) \mid \texttt{react}
  =
  1_{\texttt{bread} \otimes \texttt{oven}}

  (\putr^\mathsf{dough} \mid \ip_1) \mid \texttt{react}
  =
  \texttt{bake}
\end{mathpar}
or, diagrammatically:
\begin{mathpar}
  \includegraphics[height=1.2cm,align=c]{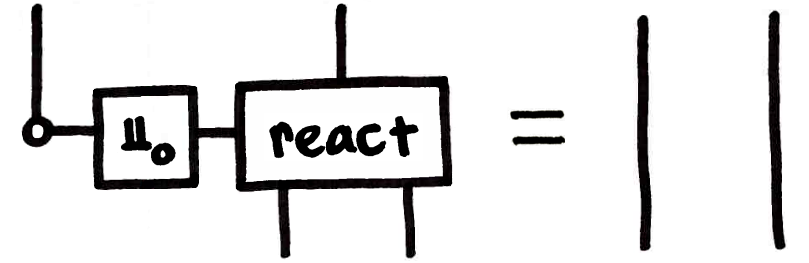}

  \includegraphics[height=1.2cm,align=c]{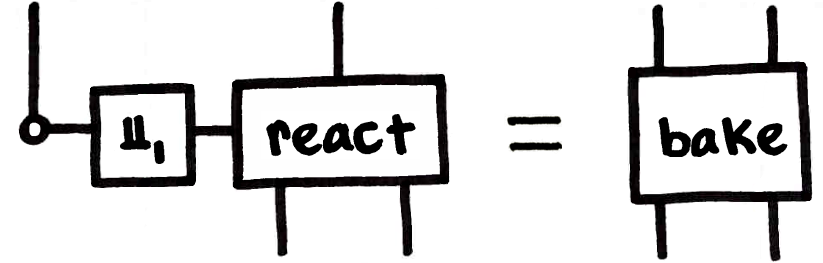}
\end{mathpar}
\end{example}
\begin{example}
  Consider $H : \cells{\texttt{dough}^\circ \times \texttt{oven}^\circ}{I}{\texttt{bread} \otimes \texttt{oven}}{\texttt{dough}^\bullet \times \texttt{oven}^\bullet}$ defined by
  \[
  H = \frac{\left(\pi_1 \mid \frac{\getl^\texttt{oven} \mid \getr^\texttt{dough}}{\sigma_{\texttt{oven},\texttt{dough}}}\right) \times \left(\pi_0 \mid \getl^\texttt{dough} \mid \getr^\texttt{oven}\right)}{\texttt{bake}}
  \]
  Then we have:
  \begin{mathpar}
    \includegraphics[height=1.2cm,align=c]{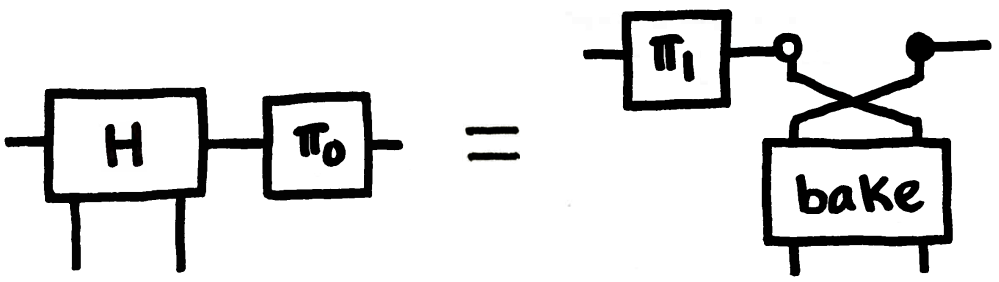}

    \includegraphics[height=1.2cm,align=c]{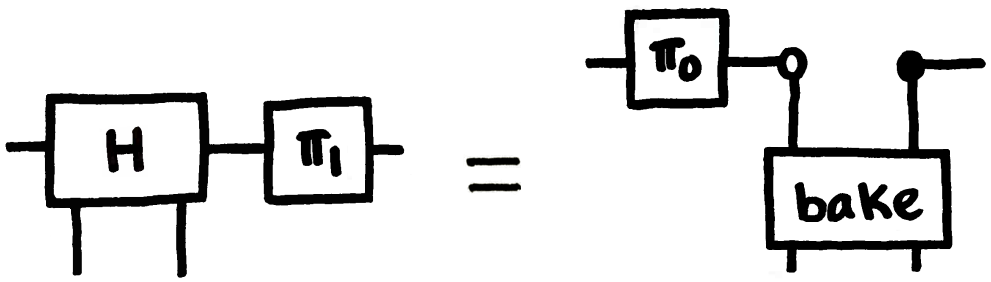}
  \end{mathpar}
  That is, if $\texttt{dough}$ is supplied along the right boundary, then $H$ chooses to obtain an $\texttt{oven}$ along the left boundary, and bakes bread. Otherwise an $\texttt{oven}$ is supplied along the right boundary, in which case $H$ chooses to obtain $\texttt{dough}$ along the left boundary, and bakes bread anyway. 
\end{example}
In this way, cells $\alpha + \beta$ and $\alpha \times \beta$ are understood as procedures that branch according to choices made externally as part of the exchange along their left an right boundary, respectively. We compare this to cells $[\alpha,\beta] : \cells{U}{A \oplus B}{C}{W}$, which we understand as procedures that branch according to their input, much as in Section~\ref{subsec:distributive-branching}. An important feature of $\corner{\A}^\oplus$ is that when such a procedure branches according to its input, this may be reflected in choices made along the left and right boundary. Explicitly, let $\alpha : \cells{U}{A}{C}{W}$ and $\beta : \cells {U'}{B}{C}{W'}$, and consider $[(\pi_0 \mid \alpha \mid \ip_0) , (\pi_1 \mid \beta \mid \ip_1)] : \cells{U + U'}{A \oplus B}{C}{W + W'}$. Then we have:
\begin{mathpar}
  \frac{\sigma_0}{[(\pi_0 \mid \alpha \mid \ip_0) , (\pi_1 \mid \beta \mid \ip_1)]}
  =
  \pi_0 \mid \alpha \mid \ip_0 

  \frac{\sigma_1}{[(\pi_0 \mid \alpha \mid \ip_0) , (\pi_1 \mid \beta \mid \ip_1)]}
  =
  \pi_1 \mid \beta \mid \ip_1
\end{mathpar}
and in this way the choice a procedure makes as part of some exchange along its left and/or right boundary may be determined by its inputs.
\begin{example}
  Suppose our base category has both an object $\texttt{bread}$ as well as an object $S_\texttt{bread} \cong I \oplus (\texttt{bread} \otimes S_\texttt{bread})$ of stacks of bread as in Section~\ref{subsec:distributive-branching}). Then consider the cell $H : \cells{\texttt{bread}^\circ \times I}{S_\texttt{bread}}{S_\texttt{bread}}{I}$ defined as in:
  \[
  H = \frac{\texttt{pop}}{\frac{\left[ (\pi_0 \mid \getl^\texttt{bread} \mid \texttt{nil}) , (\pi_1 \mid 1_{\texttt{bread} \otimes S_\texttt{bread}}) \right]}{\texttt{push}}}
  \]
  Then we have:
  \begin{mathpar}
    \includegraphics[height=1.2cm,align=c]{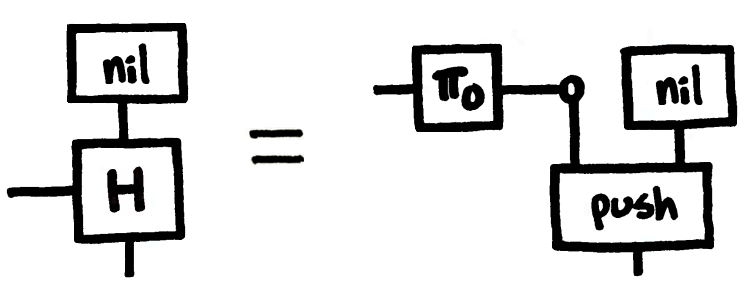}
        
    \includegraphics[height=1.2cm,align=c]{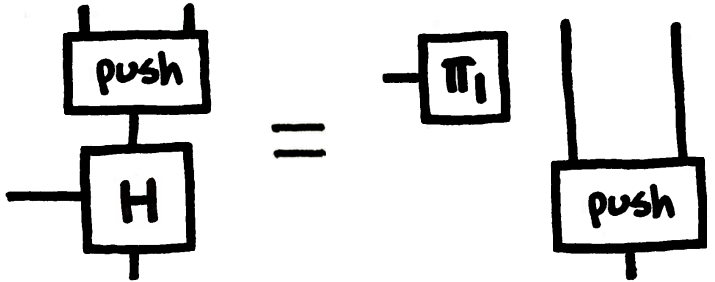}
  \end{mathpar}
  That is, if the input stack of \texttt{bread} is empty then $H$ chooses to obtain $\texttt{bread}$ along the left boundary. If the input stack of \texttt{bread} is nonempty then $H$ chooses to do nothing along the left boundary (presumably since it already has bread and does not need any more). 
\end{example}

\subsection{Elementary Properties}\label{subsec:choice-properties}
In this section we establish a number of elementary properties of the free cornering with choice. First, we observe that where our formation rule for cells $[\alpha,\beta]$ in $\corner{\A}^\oplus$ overlaps with the formation rule for copairing maps in $\A$, the two coincide:
\begin{lemma}\label{lem:copairing-coincide}
  For any $f : A \to C$ and $g : B \to C$ in $\A$, $\left[\,\corner{f},\corner{g}\,\right] = \corner{\,[f,g]\,}$ in $\corner{\A}^\oplus$. 
\end{lemma}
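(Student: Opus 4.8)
The plan is to invoke the \emph{uniqueness} clause in the formation rule for copairing cells in $\corner{\A}^\oplus$ (Definition~\ref{def:free-cornering-choice}). By that definition, $\left[\corner{f},\corner{g}\right] : \cells{I}{A \oplus B}{C}{I}$ is the unique cell satisfying $\frac{\corner{\sigma_0}}{\left[\corner{f},\corner{g}\right]} = \corner{f}$ and $\frac{\corner{\sigma_1}}{\left[\corner{f},\corner{g}\right]} = \corner{g}$. Since $\corner{[f,g]}$ has the same boundary $\cells{I}{A \oplus B}{C}{I}$, it suffices to show that $\corner{[f,g]}$ also satisfies these two equations; equality with $\left[\corner{f},\corner{g}\right]$ then follows at once.

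To verify the first equation I would chain two facts. First, the generating equation $\corner{hk} = \frac{\corner{h}}{\corner{k}}$ of Definition~\ref{def:free-cornering}, which expresses that $\corner{-}$ carries composition in $\A$ to vertical composition of cells, gives $\frac{\corner{\sigma_0}}{\corner{[f,g]}} = \corner{\sigma_0 [f,g]}$. Second, the universal property of the coproduct in $\A$ --- the defining identity $\sigma_0 [f,g] = f$ of the copairing, read in diagrammatic order --- lets us rewrite this to $\corner{f}$. Hence $\frac{\corner{\sigma_0}}{\corner{[f,g]}} = \corner{f}$, and the entirely symmetric computation with $\sigma_1$ yields $\frac{\corner{\sigma_1}}{\corner{[f,g]}} = \corner{g}$.

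I do not anticipate any genuine obstacle: the content of the lemma is precisely that the newly-introduced copairing on cells is coherent with the copairing already present in $\A$, and this coherence is forced the moment one observes that the embedding $\corner{-}$ sends the coproduct injections to the cells $\corner{\sigma_0},\corner{\sigma_1}$ appearing in the defining equations and sends composition to vertical composition. The only care required is bookkeeping --- confirming that $\corner{f}$, $\corner{g}$, and $\corner{[f,g]}$ all have both vertical boundaries equal to $I$ so that the types in the vertical composites line up, and keeping the dummy names $h,k$ of the functoriality equation distinct from the $f,g$ of the statement.
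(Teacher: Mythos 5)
Your proposal is correct and matches the paper's argument: the paper likewise shows that $\corner{[f,g]}$ satisfies the two defining equations of $[\corner{f},\corner{g}]$ (via functoriality of $\corner{-}$ and the coproduct identity $\sigma_i[f,g]$) and concludes by the uniqueness clause. No issues.
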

\begin{proof}
  We have:
  \begin{mathpar}
    \frac{\sigma_0}{\corner{\,[f,g]\,}} = \corner{f} = \frac{\sigma_0}{\left[\,\corner{f},\corner{g}\,\right]}

    \frac{\sigma_1}{\corner{\,[f,g]\,}} = \corner{g} = \frac{\sigma_1}{\left[\,\corner{f},\corner{g\,}\right]}
  \end{mathpar}
  and the claim follows.
\end{proof}
Next, we find that cells $[\alpha,\beta]$ enjoy certain absorption properties in $\corner{\A}^\oplus$:
\begin{lemma}\label{lem:copairing-absorbs}
  In $\corner{\A}^\oplus$:
  \begin{enumerate}[(i)]
  \item $\gamma \mid [\alpha,\beta] = \frac{\delta^l}{[(\gamma \mid \alpha),(\gamma \mid \beta)]}$
  \item $[\alpha,\beta] \mid \gamma = \frac{\delta^r}{[(\alpha \mid \gamma), (\beta \mid \gamma)]}$
  \item $\frac{[\alpha,\beta]}{\gamma} = [\frac{\alpha}{\gamma},\frac{\beta}{\gamma}]$
  \end{enumerate}
\end{lemma}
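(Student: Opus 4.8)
The plan is to prove all three equations by exploiting the defining universal property of the copairing cells: a cell with left boundary $A \oplus B$ is \emph{uniquely} determined by its vertical precomposites with the injection cells $\corner{\sigma_0}$ and $\corner{\sigma_1}$. Hence, to establish an equation whose right-hand side is literally of the form $[P,Q]$, it suffices to show that vertically precomposing the left-hand side with $\sigma_0$ yields $P$ and precomposing with $\sigma_1$ yields $Q$. The proofs of (i), (ii), and (iii) all reduce to this principle, combined with the interchange law and the defining equations $\frac{\sigma_0}{[\alpha,\beta]} = \alpha$ and $\frac{\sigma_1}{[\alpha,\beta]} = \beta$.

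Claim (iii) is the immediate case, since its right-hand side is already a copairing. Using associativity of vertical composition and the defining equation I would compute $\frac{\sigma_0}{\frac{[\alpha,\beta]}{\gamma}} = \frac{\frac{\sigma_0}{[\alpha,\beta]}}{\gamma} = \frac{\alpha}{\gamma}$, and symmetrically $\frac{\sigma_1}{\frac{[\alpha,\beta]}{\gamma}} = \frac{\beta}{\gamma}$; uniqueness of the copairing then yields $\frac{[\alpha,\beta]}{\gamma} = [\frac{\alpha}{\gamma},\frac{\beta}{\gamma}]$.

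For (i) and (ii) the right-hand side is not literally a copairing, because of the leading distributor $\delta^l$ (resp. $\delta^r$), so first I would strip it off using invertibility. Since $\delta^l$ and $\mu^l$ are mutually inverse in $\A$, we have $\frac{\mu^l}{\delta^l} = \corner{\mu^l\delta^l} = 1$ and $\frac{\delta^l}{\mu^l} = 1$, so vertical precomposition by $\mu^l$ is injective on cells, and the target equation of (i) is equivalent to $\frac{\mu^l}{\gamma \mid [\alpha,\beta]} = [(\gamma \mid \alpha),(\gamma \mid \beta)]$. The right-hand side is now a genuine copairing, so I apply the universal property. Using associativity together with the copairing identity $\sigma_0\mu^l = 1_D \otimes \sigma_0$ (whence $\frac{\sigma_0}{\mu^l} = 1_D \mid \corner{\sigma_0}$), then the interchange law, and finally the defining equation for $[\alpha,\beta]$, I obtain $\frac{\sigma_0}{\frac{\mu^l}{\gamma \mid [\alpha,\beta]}} = \frac{1_D \mid \corner{\sigma_0}}{\gamma \mid [\alpha,\beta]} = \frac{1_D}{\gamma} \mid \frac{\sigma_0}{[\alpha,\beta]} = \gamma \mid \alpha$, and the analogous computation for $\sigma_1$ gives $\gamma \mid \beta$. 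Claim (ii) is entirely parallel, using $\delta^r, \mu^r$ and the identity $\sigma_0\mu^r = \sigma_0 \otimes 1_D$ (which is essentially Lemma~\ref{lem:injections-distribute}), with the interchange law now splitting $\corner{\sigma_0} \mid 1_D$ against $[\alpha,\beta] \mid \gamma$.

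The only real work lies in the bookkeeping for (i) and (ii): correctly matching boundaries, selecting the appropriate distributor and copairing identity, and orienting the interchange law so that the injection lands on the copairing factor while the identity cell lands on $\gamma$. Once the distributor has been removed via invertibility, everything collapses to the universal property plus interchange, so I do not anticipate any genuine obstruction beyond keeping the types straight.
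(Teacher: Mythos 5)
Your proposal is correct and follows essentially the same route as the paper: establish (iii) directly from the uniqueness clause of the copairing, and for (i) and (ii) cancel the distributor via its inverse $\mu^l$ (resp.\ $\mu^r$) and then check the two $\sigma_i$-precomposites using interchange and Lemma~\ref{lem:injections-distribute}. The only difference is that you spell out the intermediate step $\frac{\sigma_0}{\mu^l} = 1_D \mid \corner{\sigma_0}$ explicitly, which the paper leaves implicit.
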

\begin{proof}
  \begin{enumerate}[(i)]
  \item We have:
    \begin{mathpar}
      \frac{\sigma_0}{\frac{\mu^l}{\gamma \mid [\alpha,\beta]}}
      = \gamma \mid \frac{\sigma_0}{[\alpha,\beta]}
      = \gamma \mid \alpha
      = \frac{\sigma_0}{[(\gamma \mid \alpha),(\gamma \mid \beta)]}

      \frac{\sigma_1}{\frac{\mu^l}{\gamma \mid [\alpha,\beta]}}
      = \gamma \mid \frac{\sigma_1}{[\alpha,\beta]}
      = \gamma \mid \beta
      = \frac{\sigma_1}{[(\gamma \mid \alpha),(\gamma \mid \beta)]}
    \end{mathpar}
    and so we have $\frac{\mu^l}{\gamma \mid [\alpha,\beta]} = [(\gamma \mid \alpha),(\gamma \mid \beta)]$. Precomposing vertically with $\delta^l$ on both sides proves the claim.
  \item Similar to (i).
  \item We have:
    \begin{mathpar}
      \frac{\sigma_0}{\frac{[\alpha,\beta]}{\gamma}}
      = \frac{\alpha}{\gamma}
      = \frac{\sigma_0}{[\frac{\alpha}{\gamma},\frac{\beta}{\gamma}]}

      \frac{\sigma_1}{\frac{[\alpha,\beta]}{\gamma}}
      = \frac{\beta}{\gamma}
      = \frac{\sigma_1}{[\frac{\alpha}{\gamma},\frac{\beta}{\gamma}]}
    \end{mathpar}
    and the claim follows.
  \end{enumerate}
\end{proof}
While (iii) is analogous to the naturality of the codiagonal map in a category with finite coproducts, (i) and (ii) do not seem to admit similar analogies. 

When restricted to the category of horizontal cells, the axioms concerning cells $\alpha \times \beta$ and $\alpha + \beta$  are precisely the axioms for binary products and binary coproducts. That is, we have:
\begin{lemma}\label{lem:branching-products-coproducts}
  We have:
  \begin{enumerate}[(i)]
  \item $\bh\,\corner{\A}^{\oplus}$ has binary products $U \stackrel{\pi_0}{\from}U \times W \stackrel{\pi_1}{\to} W$.
  \item $\bh\,\corner{\A}^{\oplus}$ has binary coproducts $U \stackrel{\ip_0}{\to} U + W \stackrel{\ip_1}{\from} W$.
  \end{enumerate}
\end{lemma}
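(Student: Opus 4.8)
The plan is to show that, once the category of horizontal cells is unwound, the universal properties of the binary product and coproduct in $\bh\,\corner{\A}^\oplus$ become literally the defining equations of the cells $\alpha \times \beta$ and $\alpha + \beta$ from Definition~\ref{def:free-cornering-choice}. Recall that the objects of $\bh\,\corner{\A}^\oplus$ are the elements of $\ex{\A}_\oplus$, that a morphism $U \to W$ is a cell $\cells{U}{I}{I}{W}$, that the identity on $U$ is $id_U$, and that composition is horizontal composition of cells. The one piece of bookkeeping to keep straight is orientation: a morphism $V \to U$ is a cell with $V$ on its \emph{left} boundary and $U$ on its \emph{right} boundary, so that composing with a projection $\pi_i$ on the right, or with an injection $\ip_i$ on the left, is in each case an instance of horizontal composition.

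For (i), I would take $U \times W$ together with $\pi_0 : \cells{U \times W}{I}{I}{U}$ and $\pi_1 : \cells{U \times W}{I}{I}{W}$ as the candidate product cone. Given a pair of morphisms $\alpha : V \to U$ and $\beta : V \to W$ of $\bh\,\corner{\A}^\oplus$ --- that is, cells $\cells{V}{I}{I}{U}$ and $\cells{V}{I}{I}{W}$ --- I instantiate the formation rule for $\alpha \times \beta$ with $A = B = I$. This produces a cell $\alpha \times \beta : \cells{V}{I}{I}{U \times W}$, i.e.\ a morphism $V \to U \times W$, whose two defining equations become exactly $(\alpha \times \beta) \mid \pi_0 = \alpha$ and $(\alpha \times \beta) \mid \pi_1 = \beta$; this is the existence half of the universal property. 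The uniqueness half is immediate, since Definition~\ref{def:free-cornering-choice} posits $\alpha \times \beta$ as the \emph{unique} cell satisfying those equations, so any $h : V \to U \times W$ with $h \mid \pi_0 = \alpha$ and $h \mid \pi_1 = \beta$ must coincide with $\alpha \times \beta$.

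For (ii), the argument is formally dual. I would take $U + W$ with injections $\ip_0 : \cells{U}{I}{I}{U + W}$ and $\ip_1 : \cells{W}{I}{I}{U + W}$ as the candidate coproduct cocone, and given $\alpha : U \to V$ and $\beta : W \to V$ in $\bh\,\corner{\A}^\oplus$, the cell $\alpha + \beta : \cells{U + W}{I}{I}{V}$ supplied by Definition~\ref{def:free-cornering-choice} (again specialised to $A = B = I$) serves as the mediating morphism, its defining equations reading $\ip_0 \mid (\alpha + \beta) = \alpha$ and $\ip_1 \mid (\alpha + \beta) = \beta$, with uniqueness furnished by the corresponding uniqueness clause.

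I do not expect a genuinely hard step here: the content of the lemma is precisely the observation, already flagged in the text preceding it, that the axioms governing $\alpha \times \beta$ and $\alpha + \beta$ restrict to the product and coproduct axioms on horizontal cells. The only point requiring care is to note that the universal property quantifies over exactly the horizontal cells, which form the special case $A = B = I$ of the more general --- and hence stronger --- uniqueness assertion built into the definition, so that nothing beyond the definition itself is needed.
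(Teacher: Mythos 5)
Your proposal is correct and matches the paper's reasoning exactly: the paper states this lemma without a separate proof, remarking only that ``when restricted to the category of horizontal cells, the axioms concerning cells $\alpha \times \beta$ and $\alpha + \beta$ are precisely the axioms for binary products and binary coproducts,'' which is precisely your specialisation of Definition~\ref{def:free-cornering-choice} to $A = B = I$. Your extra care about orientation and about uniqueness being inherited from the stronger clause in the definition is sound and adds nothing that would conflict with the paper.
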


The category of horizontal cells $\bh\,\corner{\A}$ of the free cornering can be understood as a category of exchanges, a perspective developed in \cite{Nester2021a,Nester2023}. In particular, isomorphic objects of $\bh\,\corner{\A}$ correspond to exchanges of $\ex{\A}$ that are \emph{morally equivalent} (Lemma 3 of \cite{Nester2021a}). We show that $\bh\,\corner{\A}^\oplus$ contains two novel pairs of such morally equivalent exchanges:
\begin{lemma}\label{lem:moral-equivalence-choice}
  In $\bh\,\corner{\A}^\oplus$:
  \begin{enumerate}[(i)]
  \item $(A \oplus B)^\circ \cong A^\circ + B^\circ$ and $(A \oplus B)^\bullet \cong A^\bullet \times B^\bullet$
  \item $(A \times B) \times C \cong A \times (B \times C)$ and $(A + B) + C \cong A + (B + C)$
  \end{enumerate}
\end{lemma}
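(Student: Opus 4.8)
The plan is to exhibit, in each case, an explicit pair of horizontal cells in opposite directions and verify that their two composites are the relevant horizontal identity cells $id_U$, using the defining equations for projection/injection/copairing cells from Definition~\ref{def:free-cornering-choice} together with the uniqueness clauses in those definitions. Recall that a horizontal cell $U \to W$ in $\bh\,\corner{\A}^\oplus$ is a cell of type $\cells{U}{I}{I}{W}$, that composition is horizontal composition, and that isomorphism in $\bh\,\corner{\A}^\oplus$ is exactly what ``morally equivalent exchange'' means. So each claimed $\cong$ amounts to producing two cells and checking two equations.

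For part (i), consider $(A \oplus B)^\circ \cong A^\circ + B^\circ$. In one direction I would use the copairing-style machinery: the exchange $A^\circ + B^\circ$ offers a left-participant choice, and I would build a cell $A^\circ + B^\circ \to (A\oplus B)^\circ$ by applying the $+$-elimination (the unique cell $\alpha + \beta$) to the two cells $\getl^A\,;\,\corner{\sigma_0}\,;\,\putr^{A\oplus B}$ and $\getl^B\,;\,\corner{\sigma_1}\,;\,\putr^{A\oplus B}$ (composed appropriately so that receiving an $A$ or $B$, injecting it into $A \oplus B$ via $\corner{\sigma_0}$ or $\corner{\sigma_1}$, and sending it out matches up). In the other direction I would receive an $A \oplus B$ along the left via $\getl^{A\oplus B}$, then use the cell $[-,-]$ of Definition~\ref{def:free-cornering-choice} to case-split on the coproduct, sending the $A$-branch out through $\ip_0\,;\,\putr^A$-style composites and the $B$-branch through $\ip_1$. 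Verifying the two round-trips are identities reduces to the $\beta$-equations for $+$ and for $[-,-]$ (each composite with $\ip_0/\ip_1$ or with $\corner{\sigma_0}/\corner{\sigma_1}$ picks out the correct branch), followed by the yanking equations to cancel the $\getl/\putr$ pairs; the ``identity'' conclusion then follows from the uniqueness clauses. The dual isomorphism $(A \oplus B)^\bullet \cong A^\bullet \times B^\bullet$ is handled symmetrically, swapping the roles of $\circ/\bullet$, of $+/\times$, of injections/projections, and of $\getr/\putl$.

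For part (ii) the two associativity isomorphisms are formal consequences of the universal properties established in Lemma~\ref{lem:branching-products-coproducts}: since $\bh\,\corner{\A}^\oplus$ has binary products given by $\times$ and binary coproducts given by $+$, the canonical associator maps $(U \times W)\times V \to U \times (W \times V)$ and $(U + W) + V \to U + (W + V)$ built from projections/pairings (resp. injections/copairings) are isomorphisms by the standard argument that binary products and coproducts are associative up to canonical iso. I would simply write down the mediating cells using $\pi_i$ and the $\times$-introduction cells (resp. $\ip_i$ and $+$-introduction cells) and invoke Lemma~\ref{lem:branching-products-coproducts}, rather than reprove associativity by hand.

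The main obstacle I expect is part (i): unlike part (ii), it genuinely mixes two different pieces of structure --- the \emph{communication} choice operators $+,\times$ on the vertical boundary with the \emph{sequential} coproduct $\oplus$ on the horizontal boundary (via $[-,-]$, $\corner{\sigma_0}$, $\corner{\sigma_1}$) --- so the round-trip verifications require interleaving the branch-selection equations with the corner yanking equations, and I must be careful that the side on which the choice is made (left participant for $+$, case-split on input for $[-,-]$) lines up correctly. The distributivity of $\A$ plays no direct role here since everything happens at the level of unit horizontal boundaries, but keeping track of which equation (the $+$/$\times$ computation rules versus the coproduct rules for $[-,-]$) justifies each step is where the care is needed.
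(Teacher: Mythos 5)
Your proposal is correct and follows essentially the same route as the paper: the paper's mutually inverse cells are exactly your two candidates (the $+$-elimination of $\getl^A;\corner{\sigma_0};\putr^{A\oplus B}$ and $\getl^B;\corner{\sigma_1};\putr^{A\oplus B}$ in one direction, and $\getl^{A\oplus B}$ followed by $[\putr^A \mid \ip_0,\putr^B \mid \ip_1]$ in the other), verified via the computation rules for $+$ and $[-,-]$, yanking, and the uniqueness clauses, with part (ii) likewise dispatched by citing Lemma~\ref{lem:branching-products-coproducts}. Your observation that distributivity of $\A$ is not directly invoked in these verifications also matches the paper's proof.
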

\begin{proof}
  \begin{enumerate}[(i)]
  \item Let $\gamma = \frac{\getl^A}{\sigma_0} + \frac{\getl^B}{\sigma_1} : \cells{A^\circ + B^\circ}{I}{A \oplus B}{I}$. That is, $\gamma$ is the unique cell such that:
\begin{mathpar}
  \includegraphics[height=1.6cm,align=c]{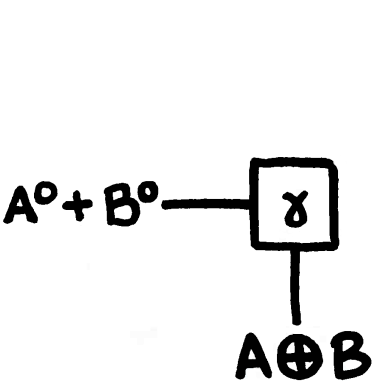}
  
  \includegraphics[height=1.2cm,align=c]{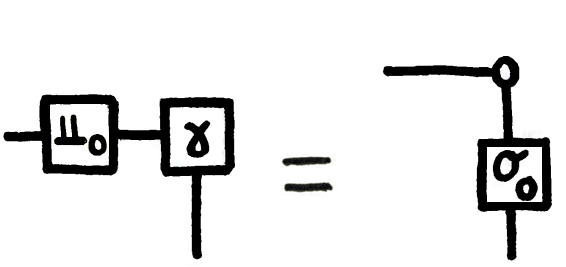}

  \includegraphics[height=1.2cm,align=c]{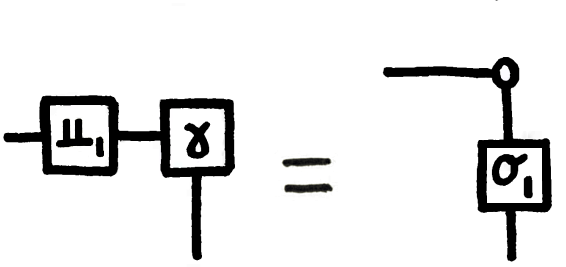}
\end{mathpar}
Next, define $\delta = \left[ \putr^A \mid \ip_0 , \putr^B \mid \ip_1 \right] : \cells{I}{A \oplus B}{I}{A^\circ \otimes B^\circ}$. That is, $\delta$ is the unique cell such that:
\begin{mathpar}
  \includegraphics[height=1.6cm,align=c]{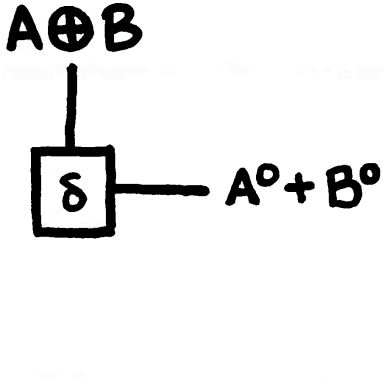}

  \includegraphics[height=1.2cm,align=c]{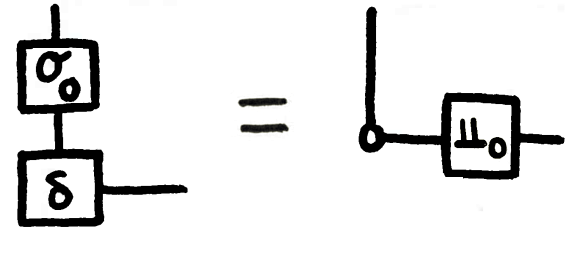}

  \includegraphics[height=1.2cm,align=c]{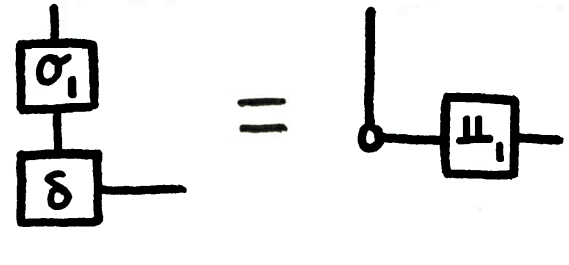}
\end{mathpar}
Then we have
\begin{mathpar}
  \includegraphics[height=1.6cm,align=c]{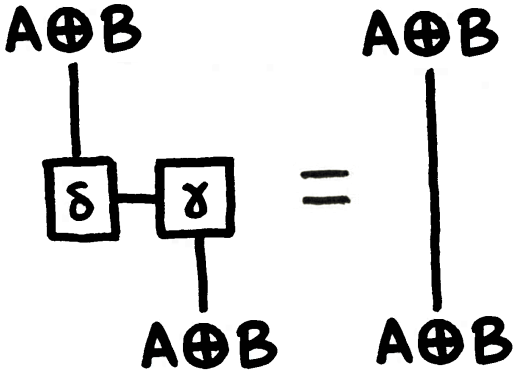}
\end{mathpar}
by the universal property of $\oplus$, since we have both of:
\begin{mathpar}
  \includegraphics[height=1.2cm,align=c]{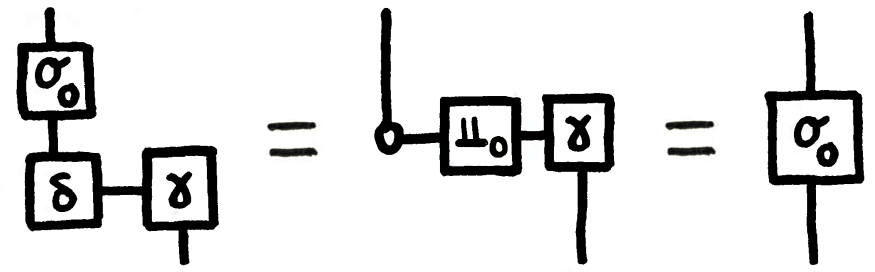}

  \includegraphics[height=1.2cm,align=c]{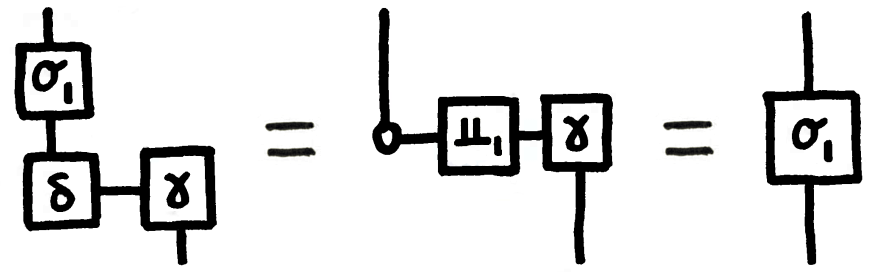}
\end{mathpar}
Similarly, we have:
\begin{mathpar}
  \includegraphics[height=1.6cm,align=c]{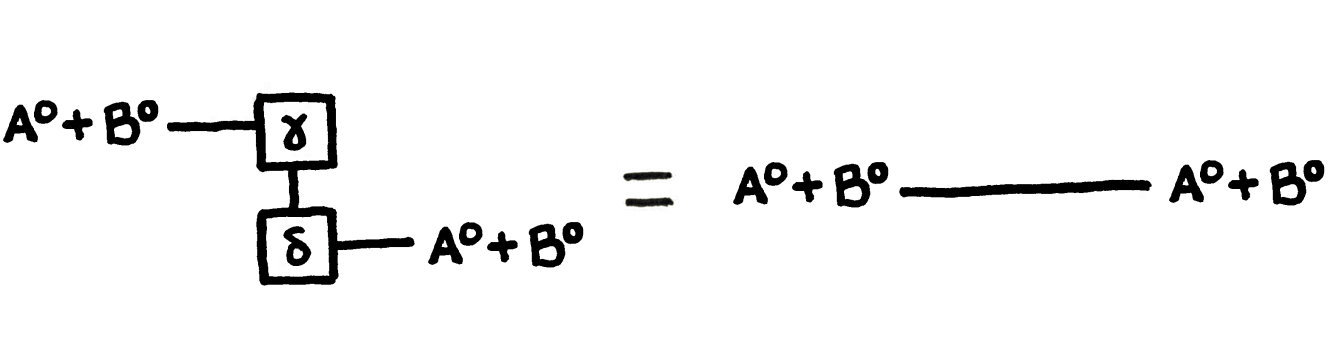}
\end{mathpar}
by the universal property of $+$, as in:
\begin{mathpar}
  \includegraphics[height=1.2cm,align=c]{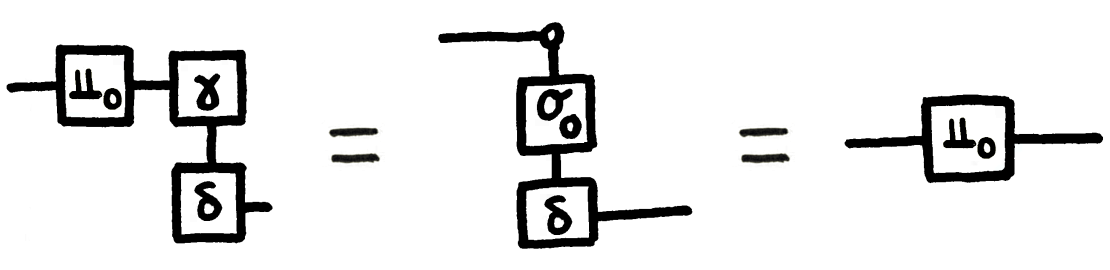}

  \includegraphics[height=1.2cm,align=c]{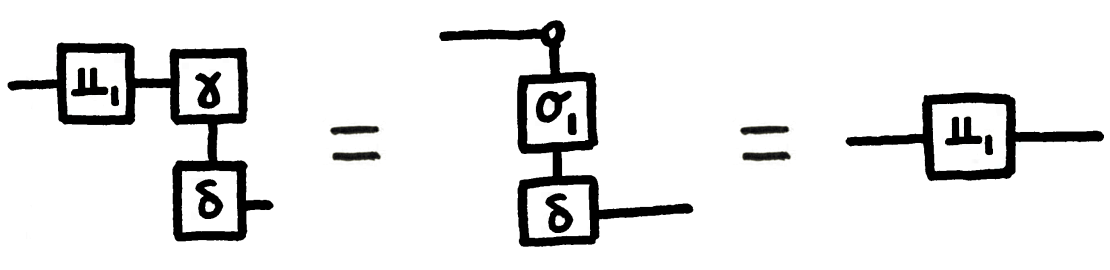}
\end{mathpar}
Then the following arrows of $\bh\,\corner{\A}^\oplus$ are mutually inverse:
\begin{mathpar}
  \includegraphics[height=1.6cm,align=c]{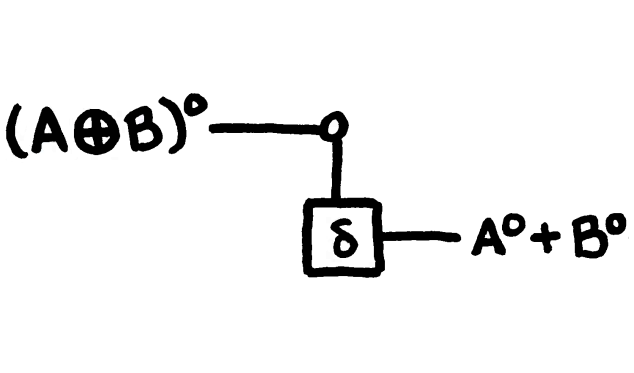}

  \includegraphics[height=1.6cm,align=c]{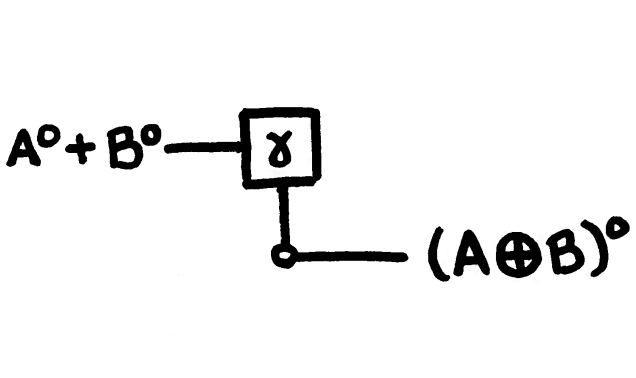}
\end{mathpar}
and the claim follows. The proof that $(A \oplus B)^\bullet = A^\bullet \times B^\bullet$ is similar. 
\item Follows immediately from Lemma~\ref{lem:branching-products-coproducts}.
  \end{enumerate}
\end{proof}
This makes intuitive sense: from the resource-theoretic perspective, an instance of $A \oplus B$ is either an instance of $A$ or an instance of $B$. Then it certainly ought to be the case that $\alice$ giving $\bob$ an instance of $A \oplus B$ is the same as $\alice$ choosing whether to give $\bob$ an instance of $A$ or to give $\bob$ an instance of $B$. The above lemma tells us that this is indeed the case. 
\subsection{Crossing Cells}\label{subsec:choice-crossing}
We extend Definition~\ref{def:crossing-cells} to obtain crossing cells in the free cornering with choice:
\begin{definition}\label{def:choice-crossing}
  Let $\A$ be a distributive monoidal category. For each $A \in \corner{\A}^\oplus_H$ and each $U \in \corner{\A}^\oplus_V$ we define a crossing cell $\chi_{U,A} : \cells{U}{A}{A}{U}$ by induction on the structure of $U$. The cases for $A^\circ$, $A^\bullet$, $I$, and $U \otimes W$ are as in Definition~\ref{def:crossing-cells}. For $U+W$ we define $\chi_{U + W,A} = (\chi_{U,A} \mid \ip_0) + (\chi_{W,A} \mid \ip_1)$. That is, $\chi_{U+W,A}$ is the unique cell satisfying:
\begin{mathpar}
  \includegraphics[height=1.2cm,align=c]{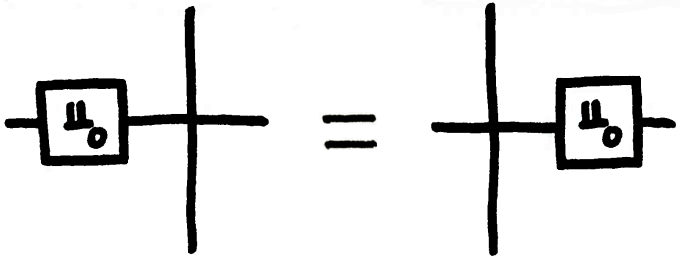}

  \includegraphics[height=1.2cm,align=c]{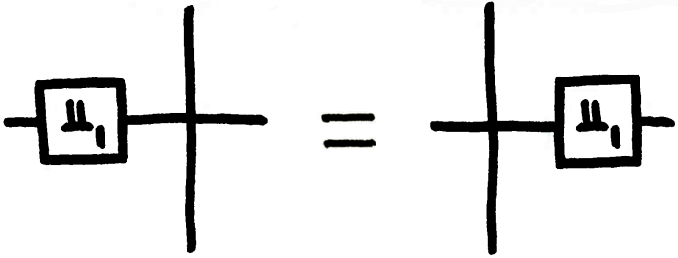}
\end{mathpar}
Similarly, for $U \times W$ we define $\chi_{U \times W,A} = (\pi_0 \mid \chi_{U,A}) \times (\pi_1 \mid \chi_{W,A})$, so $\chi_{U \times W,A}$ is the unique cell satisfying:
\begin{mathpar}
  \includegraphics[height=1.2cm,align=c]{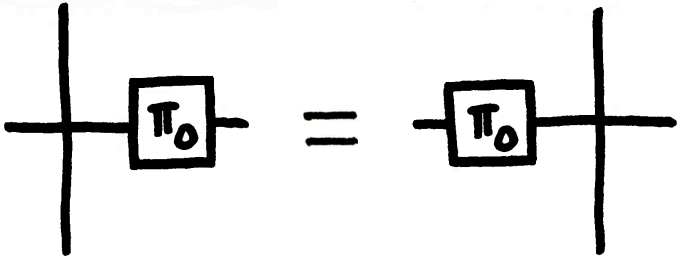}

  \includegraphics[height=1.2cm,align=c]{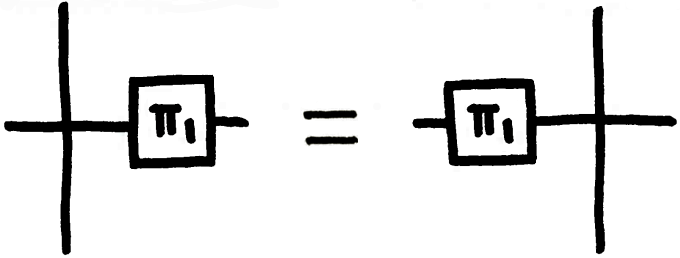}
\end{mathpar}
\end{definition}

We show that the crossing cells remain well-behaved. First, the crossing cells remain coherent with respect to horizontal composition in the free cornering with choice:
\begin{lemma}\label{lem:coherent-crossing-choice}
  For $U \in \ex{A}_\oplus$ and $A,B \in \A_0$ we have 
  \begin{enumerate}[(i)]
  \item $\chi_{U,{A \otimes B}} = \chi_{U,A} \mid \chi_{U,B}$
  \item $\chi_{U,I} = id_U$
  \end{enumerate}
\end{lemma}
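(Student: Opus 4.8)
The plan is to prove (i) and (ii) simultaneously by structural induction on $U \in \ex{\A}_\oplus$, following the template of Lemma~\ref{lem:coherent-crossing}. The cases $U = C^\circ$, $U = C^\bullet$, $U = I$, and $U = V \otimes W$ are handled exactly as there, since Definition~\ref{def:choice-crossing} leaves the crossing cells on these constructors unchanged; in particular the $\otimes$ case again reduces to a single application of the interchange law. Thus everything reduces to treating the two genuinely new constructors, $U = V + W$ and $U = V \times W$.

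For part (i) in the case $U = V + W$, I would argue via the uniqueness clause in the universal property of $\alpha + \beta$ (Definition~\ref{def:free-cornering-choice}): $\chi_{V+W,A\otimes B}$ is characterized as the unique cell whose horizontal composites with the injections satisfy $\ip_0 \mid \chi_{V+W,A\otimes B} = \chi_{V,A\otimes B}\mid\ip_0$ and $\ip_1 \mid \chi_{V+W,A\otimes B} = \chi_{W,A\otimes B}\mid\ip_1$. It therefore suffices to verify that the candidate $\chi_{V+W,A}\mid\chi_{V+W,B}$ satisfies these same two equations. Computing $\ip_0 \mid (\chi_{V+W,A}\mid\chi_{V+W,B})$, I would reassociate the horizontal composite and apply the defining equations of $\chi_{V+W,A}$ and $\chi_{V+W,B}$ from Definition~\ref{def:choice-crossing} to slide $\ip_0$ past both factors, arriving at $(\chi_{V,A}\mid\chi_{V,B})\mid\ip_0$; the inductive hypothesis for $V$ then rewrites $\chi_{V,A}\mid\chi_{V,B}$ as $\chi_{V,A\otimes B}$, yielding exactly $\chi_{V,A\otimes B}\mid\ip_0$. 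The $\ip_1$ equation is identical with $W$ in place of $V$, and the case $U = V\times W$ is formally dual, using the projections $\pi_0,\pi_1$ and composition on the right in place of the injections and composition on the left.

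For part (ii) I would proceed the same way. In the case $U = V + W$ the target $\chi_{V+W,I}$ is again pinned down by its composites with $\ip_0$ and $\ip_1$, so it suffices to show that $id_{V+W}$ satisfies those defining equations. Here both sides collapse by unitality of horizontal composition together with the inductive hypotheses $\chi_{V,I} = id_V$ and $\chi_{W,I} = id_W$: one checks $\ip_0 \mid id_{V+W} = \ip_0 = id_V \mid \ip_0 = \chi_{V,I}\mid\ip_0$, and symmetrically for $\ip_1$. The case $U = V\times W$ is dual, with $id_{V\times W}$ and the projections.

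I do not anticipate a deep obstacle; the argument is routine but requires careful bookkeeping. The single point that demands attention is that the crossing cells on $+$ and $\times$ are specified only through the \emph{uniqueness} afforded by their universal properties, so each step must be phrased as verifying that a candidate cell satisfies the characterizing equations, rather than as a direct rewrite of the crossing cell. Keeping the horizontal reassociations and the placement of the $\ip_i$ and $\pi_i$ straight---in particular confirming that all boundaries line up so that every horizontal composite is well-formed---is where essentially all of the care is needed.
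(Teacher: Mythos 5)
Your proposal is correct and follows essentially the same route as the paper's proof: both treat only the new constructors $+$ and $\times$, compute the horizontal composites with the injections (resp.\ projections) using the defining equations of the crossing cells together with the inductive hypothesis, and conclude by the uniqueness clause of the universal property of $-+-$ (resp.\ $-\times-$). The only difference is cosmetic --- you push $\ip_0$ through the candidate $\chi_{V+W,A}\mid\chi_{V+W,B}$ while the paper starts from $\ip_0\mid\chi_{V+W,A\otimes B}$ and meets in the middle.
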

\begin{proof}
  We provide the inductive cases necessary to extend the proof of Lemma~\ref{lem:coherent-crossing} to account for the new structure.
  \begin{enumerate}[(i)]
  \item In the inductive case for $U + W$ we have:
    \[
    \ip_0 \mid \chi_{U+W,A \otimes B} = \chi_{U,A \otimes B} \mid \ip_0 = \chi_{U,A} \mid \chi_{U,B} \mid \ip_0 = \ip_0 \mid \chi_{U+W,A} \mid \chi_{U+W,B}
    \]
    Similarly, we have $\ip_1 \mid \chi_{U+W,A \otimes B} = \ip_1 \mid \chi_{U+W,A} \mid \chi_{U+W,B}$. It follows by the universal property of $-+-$ that $\chi_{U+W,A\otimes B} = \chi_{U+W,A} \mid \chi_{U+W,B}$. The inductive case for $U\times W$ is similar. 
  \item In the inductive case for $U + W$ we have:
    \[
    \ip_0 \mid \chi_{U+W,I} = \chi_{U,I} \mid \ip_0 = id_U \mid \ip_0 = \ip_0 \mid id_{U+W}
    \]
    Similarly, we have $\ip_1 \mid \chi_{U+W,I} = \ip_1 \mid id_{U+W}$. It follows that $\chi_{U+W,I} = id_{U+W}$, as required. The case for $U \times W$ is similar. 
  \end{enumerate}
\end{proof}

Further, we find that the core technical lemma concerning crossing cells still holds:
\begin{lemma}~\label{lem:choice-crossing-swaps}
  For any cell $\alpha$ of $\corner{\A}^\oplus$ we have
  \[
  \includegraphics[height=1.2cm]{figs/crossing-swaps.png}
  \]
\end{lemma}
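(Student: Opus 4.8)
The plan is to extend the structural induction used to prove Lemma~\ref{lem:crossing-swaps} to the enlarged collection of cells of $\corner{\A}^\oplus$. Every such cell is built from the generators of $\corner{\A}$ together with the projection cells $\pi_0,\pi_1$, the injection cells $\ip_0,\ip_1$, and the cell-formers $-\times-$, $-+-$, and $[-,-]$, using horizontal and vertical composition. For the generators and composites already present in $\corner{\A}$ the equation holds by exactly the argument given for Lemma~\ref{lem:crossing-swaps}, since those cells have vertical boundaries in $\ex{\A}$ and the crossing cells indexing them are unchanged; the two inductive cases for horizontal and vertical composition likewise carry over verbatim. Thus it remains only to supply the new base and inductive cases.

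For the base cases we must treat the projection and injection cells, and here there is essentially nothing to prove: the inductive clauses of Definition~\ref{def:choice-crossing} defining $\chi_{U\times W,A}$ and $\chi_{U+W,A}$ \emph{are} precisely the instances of the crossing-swap equation for $\pi_0,\pi_1$ and for $\ip_0,\ip_1$. In other words, the defining equations for the new crossing cells were chosen exactly so that they slide past the projections and injections in the required way, so these base cases are immediate.

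For the inductive cases of $\alpha+\beta$ and $\alpha\times\beta$ I would argue by the universal properties of $-+-$ and $-\times-$. To establish the equation for $\alpha+\beta : \cells{U+W}{A}{B}{V}$, compose both sides horizontally on the left with $\ip_0$ and with $\ip_1$; using the injection base case to convert $\chi_{U+W,-}$ into $\chi_{U,-}$ or $\chi_{W,-}$, together with the defining equations $\ip_0\mid(\alpha+\beta)=\alpha$ and $\ip_1\mid(\alpha+\beta)=\beta$, each side reduces to the crossing-swap equation for $\alpha$ or $\beta$, which holds by the induction hypothesis. Uniqueness in the universal property of $-+-$ then forces the two sides to agree. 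The case of $\alpha\times\beta$ is dual, composing on the right with $\pi_0$ and $\pi_1$ and using $(\alpha\times\beta)\mid\pi_0=\alpha$ and $(\alpha\times\beta)\mid\pi_1=\beta$.

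The delicate case is $[\alpha,\beta]:\cells{U}{A\oplus B}{C}{W}$, whose top boundary carries a coproduct of the base category. The strategy is again to exploit a universal property, this time that of $\oplus$, by composing vertically on top with the injections $\corner{\sigma_0}$ and $\corner{\sigma_1}$ and invoking $\frac{\corner{\sigma_0}}{[\alpha,\beta]}=\alpha$ and $\frac{\corner{\sigma_1}}{[\alpha,\beta]}=\beta$. Since $\corner{\sigma_0}$ and $\corner{\sigma_1}$ are of the form $\corner{f}$, their crossing-swap equations are already available as base cases of Lemma~\ref{lem:crossing-swaps}, so each side should collapse to the crossing-swap equation for $\alpha$ or $\beta$, and uniqueness for $\oplus$ concludes. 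I expect this to be the main obstacle: one must verify that crossing a wire past $[\alpha,\beta]$ is genuinely compatible with the way crossing cells interact with $\oplus$ on the horizontal boundary, and it is here that an auxiliary fact relating crossing cells to $\oplus$ (cf.\ Lemma~\ref{lem:coherent-oplus-choice}) is likely to be required to make the reduction through $\corner{\sigma_0}$ and $\corner{\sigma_1}$ go through cleanly. The remaining bookkeeping is routine.
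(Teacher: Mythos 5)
Your proposal follows the paper's proof essentially step for step: the paper likewise extends the structural induction of Lemma~\ref{lem:crossing-swaps}, treats the new base cases as immediate from the defining equations of $\chi_{U+W,A}$ and $\chi_{U\times W,A}$, and handles $\alpha+\beta$, $\alpha\times\beta$, and $[\alpha,\beta]$ via the universal properties of $+$, $\times$, and $\oplus$ respectively. One correction to your remark on the delicate case: the auxiliary fact needed for $[\alpha,\beta]$ is \emph{not} Lemma~\ref{lem:coherent-oplus-choice} --- that lemma is proved after the present one and concerns the crossing cell $\chi_{U,A\oplus B}$, which never appears here, since the wire crossing past $[\alpha,\beta]$ is an arbitrary object $D$ rather than the coproduct on its top boundary. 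The real issue is that the top boundary of the two sides is $(A\oplus B)\otimes D$ rather than a literal coproduct, so the universal property of $\oplus$ must be transported along the distributivity isomorphism: the paper first verifies the equation after vertical precomposition with $\corner{\sigma_0}\mid 1_D$ and $\corner{\sigma_1}\mid 1_D$ (using naturality of the braiding in $\A$, interchange, and the induction hypothesis for $\alpha$ and $\beta$), and then precomposes with $\corner{\delta^r}$ to conclude, in the same spirit as Lemma~\ref{lem:copairing-absorbs}(ii) and Lemma~\ref{lem:injections-distribute}.
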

\begin{proof}
  By structural induction on cells of $\corner{\A}^\oplus$. The base cases and the (inductive) cases for cells $\alpha \mid \beta$ and $\frac{\alpha}{\beta}$ are as in the proof of Lemma~\ref{lem:crossing-swaps}. The remaining inductive cases are as follows: For cells $\alpha + \beta$, we have:
  \begin{mathpar}
    \includegraphics[height=1.2cm,align=c]{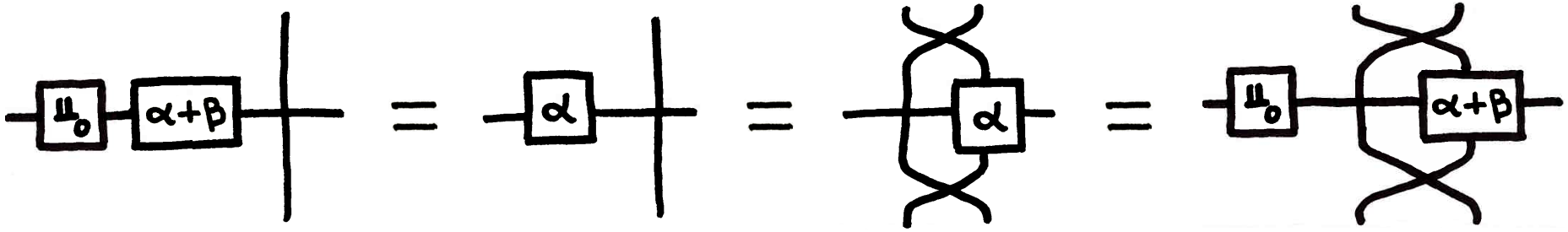}

    \includegraphics[height=1.2cm,align=c]{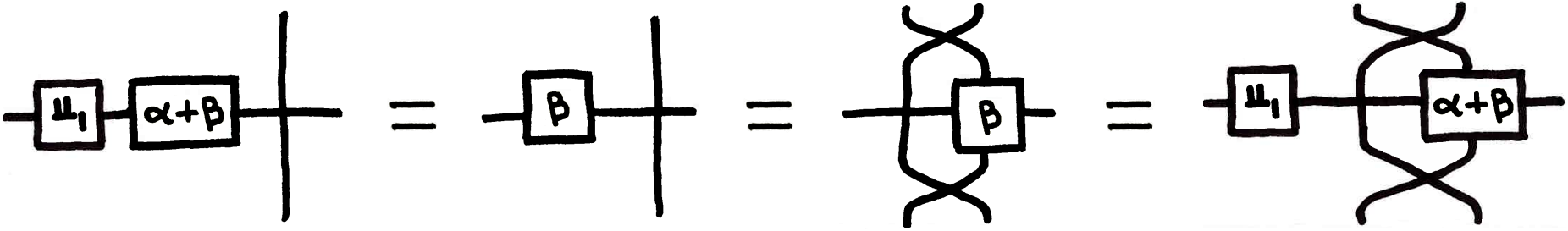}
  \end{mathpar}
  and then by the universal property of $+$ we have
  \[
  \includegraphics[height=1.2cm,align=c]{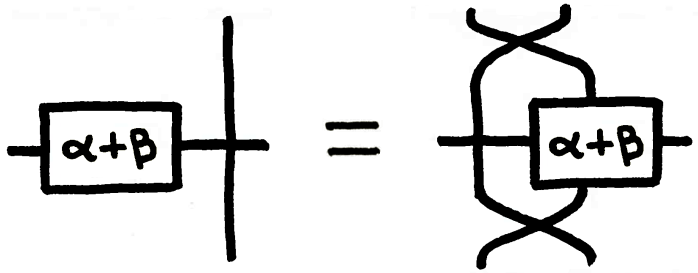}
  \]
  as required. The case for cells $\alpha \times \beta$ is similar. In the inductive case for cells $[\alpha,\beta]$ we have:
  \begin{mathpar}
    \includegraphics[height=1.6cm,align=c]{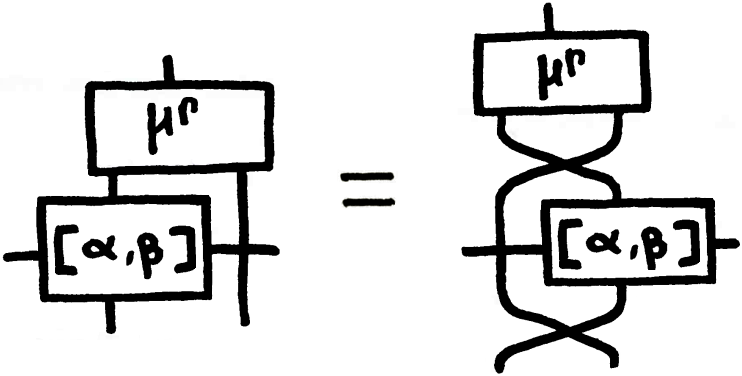}
  \end{mathpar}
  by the universal property of $\oplus$ as in:
  \begin{mathpar}
    \includegraphics[height=2cm,align=c]{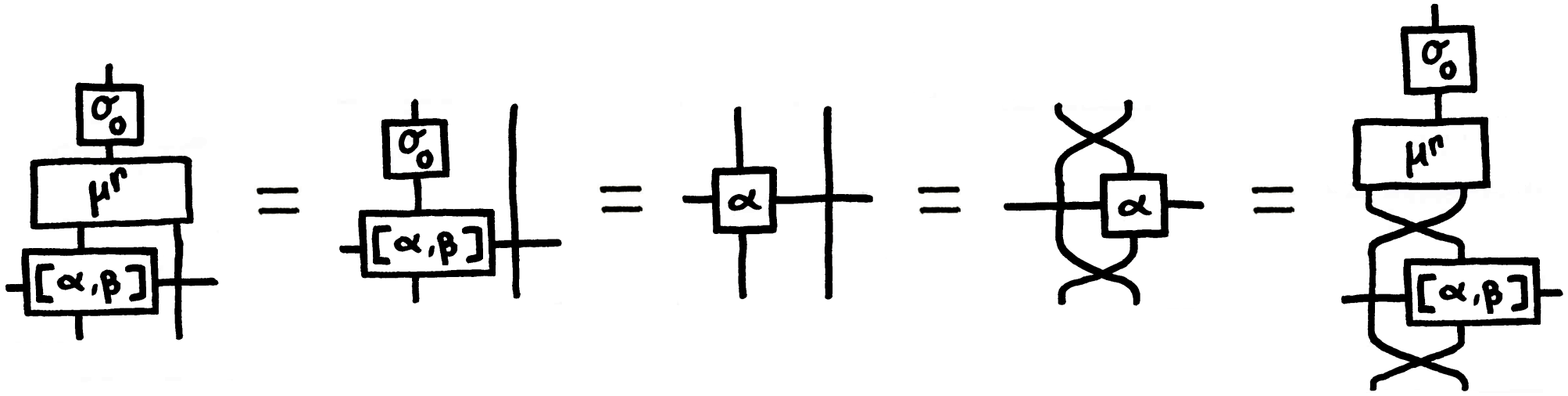}
  \end{mathpar}
  with the case for $\sigma_1$ being similar. Precomposing vertically with $\delta_r$ yields:
  \begin{mathpar}
    \includegraphics[height=1.2cm,align=c]{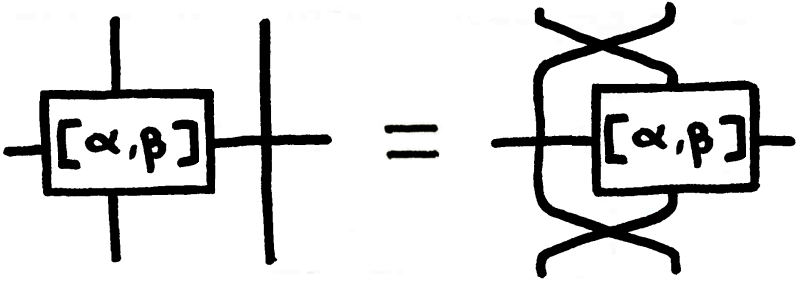}
  \end{mathpar}
\end{proof}

Consequently, $\corner{\A}^\oplus$ is a monoidal double category with the tensor product of cells and proof as in Lemma~\ref{lem:monoidaldouble}. We record:
\begin{lemma}\label{lem:choicemonoidaldouble}
  If $\A$ is a distributive monoidal category then $\corner{\A}^\oplus$ is a monoidal double category. 
\end{lemma}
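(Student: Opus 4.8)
The plan is to mimic the proof of Lemma~\ref{lem:monoidaldouble} essentially verbatim, the one new ingredient being that the technical lemma it rests on (Lemma~\ref{lem:crossing-swaps}) has now been upgraded to Lemma~\ref{lem:choice-crossing-swaps} for the setting with choice. Recall that a monoidal double category is a pseudo-monoid object in $\bv\mathsf{DblCat}$, so what I must produce is a tensor product $\otimes$ on $\corner{\A}^\oplus$ that is a lax double functor, together with associativity and unit coherence data, and then check the pseudo-monoid axioms. On the edge monoids I define $\otimes$ exactly as before: on horizontal edges it is the monoidal product of $\A$, and on vertical edges it is the operation $\otimes$ of $\ex{\A}_\oplus$. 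On cells I take the same definition as in Lemma~\ref{lem:monoidaldouble}, namely the composite that inserts a crossing cell to shuffle the two middle boundaries past one another. Since the crossing cells $\chi_{U,A}$ are now defined for \emph{all} $U \in \ex{\A}_\oplus$ by Definition~\ref{def:choice-crossing}, and satisfy the coherence equations of Lemma~\ref{lem:coherent-crossing-choice}, this prescription makes sense for arbitrary cells of $\corner{\A}^\oplus$.

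Next I would verify that $\otimes$ so defined is a lax double functor. The laxity comparison is the globular (vertical) transformation whose components exchange the two middle wires, exactly as in the proof of Lemma~\ref{lem:monoidaldouble}. The essential content of this being a well-defined vertical transformation is that it is natural with respect to \emph{every} cell of $\corner{\A}^\oplus$, and this is precisely the statement of Lemma~\ref{lem:choice-crossing-swaps}. Crucially, the inductive proof of that lemma already treats all of the new generating cells of $\corner{\A}^\oplus$ --- the copairings $\alpha + \beta$ and $\alpha \times \beta$, the projections $\pi_0,\pi_1$ and injections $\ip_0,\ip_1$, and the copairings $[\alpha,\beta]$ --- so the verification of lax double functoriality carries over unchanged, with functoriality with respect to horizontal and vertical composition reducing as before to the coherence of crossing cells (Lemma~\ref{lem:coherent-crossing-choice}) together with the swap lemma.

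As in the base case, $\otimes$ is strictly associative and unital on objects, edges, and cells, so the associator and unitor of the pseudo-monoid may be taken to be identities, and the pentagon and triangle coherence axioms then hold trivially. This is the observation recorded at the end of Lemma~\ref{lem:monoidaldouble} (that $\otimes$ is strictly associative and unital in spite of being only pseudo-functorial), and nothing about the added choice structure disturbs it.

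I expect no genuine obstacle here: the hard work has been front-loaded into Lemma~\ref{lem:choice-crossing-swaps}, so that the remaining checks are all routine. The only point demanding any care is confirming that the new inductive cases established in that lemma are exactly the ones required to make the laxity transformation natural at the choice-theoretic cells, rather than merely at the cells inherited from $\corner{\A}$; but since those cases are precisely what Lemma~\ref{lem:choice-crossing-swaps} supplies, the present lemma follows immediately.
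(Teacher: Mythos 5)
Your proposal is correct and follows exactly the paper's route: the paper proves this lemma simply by invoking the tensor product construction and argument of Lemma~\ref{lem:monoidaldouble}, with Lemma~\ref{lem:choice-crossing-swaps} replacing Lemma~\ref{lem:crossing-swaps} as the naturality input for the laxity transformation. Your additional remarks about strict associativity and the coherence of crossing cells match the paper's (largely implicit) reasoning.
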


Further, we find that crossing cells in the free cornering with choice are coherent with respect to $\oplus$ in the following sense:
\begin{lemma}\label{lem:coherent-oplus-choice}
  In $\corner{\A}^\oplus$, $\chi_{U,A \oplus B} = \left[\frac{\chi_{U,A}}{\sigma_0},\frac{\chi_{U,B}}{\sigma_1}\right]$. That is, $\chi_{U,A \oplus B}$ is the unique cell such that:
  \begin{mathpar}
    \frac{\sigma_0}{\chi_{U,A\oplus B}} = \frac{\chi_{U,A}}{\sigma_0}

    \frac{\sigma_1}{\chi_{U,A\oplus B}} = \frac{\chi_{U,B}}{\sigma_1}
  \end{mathpar}
\end{lemma}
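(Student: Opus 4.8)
The plan is to derive the identity from the universal property of $\oplus$ together with the crossing-swaps lemma, avoiding any fresh induction. By Definition~\ref{def:free-cornering-choice}, the copairing $[\gamma,\delta]$ is the \emph{unique} cell satisfying $\frac{\sigma_0}{[\gamma,\delta]} = \gamma$ and $\frac{\sigma_1}{[\gamma,\delta]} = \delta$. Taking $\gamma = \frac{\chi_{U,A}}{\sigma_0}$ and $\delta = \frac{\chi_{U,B}}{\sigma_1}$, it therefore suffices to check that $\chi_{U,A \oplus B}$ itself satisfies the two displayed equations $\frac{\sigma_0}{\chi_{U,A\oplus B}} = \frac{\chi_{U,A}}{\sigma_0}$ and $\frac{\sigma_1}{\chi_{U,A\oplus B}} = \frac{\chi_{U,B}}{\sigma_1}$; the equality $\chi_{U,A\oplus B} = [\gamma,\delta]$ is then forced by uniqueness. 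This reduces the lemma to the two equations in its statement, whose boundaries I would first confirm: $\frac{\chi_{U,A}}{\sigma_0} : \cells{U}{A}{A \oplus B}{U}$ and $\frac{\chi_{U,B}}{\sigma_1} : \cells{U}{B}{A \oplus B}{U}$ are exactly the data a copairing into $A \oplus B$ requires.

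Next I would observe that each equation is an instance of Lemma~\ref{lem:choice-crossing-swaps}. The coproduct injection $\sigma_0 : A \to A \oplus B$ of the distributive monoidal base $\A$ determines a cell $\corner{\sigma_0} : \cells{I}{A}{A \oplus B}{I}$ with trivial vertical boundaries. Specialising Lemma~\ref{lem:choice-crossing-swaps} to $\alpha = \corner{\sigma_0}$ — this is essentially the $\corner{f}$ base case appearing in its proof — yields $\frac{\chi_{U,A}}{\sigma_0} = \frac{\sigma_0}{\chi_{U,A \oplus B}}$, since the bottom boundary $A$ of $\chi_{U,A}$ matches the top boundary of $\corner{\sigma_0}$, the top boundary $A \oplus B$ of $\chi_{U,A\oplus B}$ matches its bottom boundary, and the vertical boundaries remain $U$ throughout ($U \otimes I = U = I \otimes U$). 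Taking $\alpha = \corner{\sigma_1}$ gives the companion equation, completing the argument.

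The only real subtlety here is bookkeeping rather than mathematics: one must verify that the specialisation of the crossing-swaps lemma to $\corner{\sigma_i}$ typechecks and produces exactly the displayed equations. Should a more self-contained argument be preferred, I would instead prove the two equations directly by structural induction on $U$: the cases $U = I$ and $U = V \otimes V'$ are routine (the former by unitality of vertical composition, the latter by two applications of the inductive hypothesis and associativity, exactly as in Lemma~\ref{lem:coherent-crossing-choice}), the corner cases $U = C^\circ, C^\bullet$ are the genuinely computational base cases, and the branching cases $U = V + V'$ and $U = V \times V'$ would be dispatched by appealing once more to the universal properties of $-+-$ and $-\times-$. I expect the corner base cases to be the main obstacle in this alternative, which is precisely why routing the proof through Lemma~\ref{lem:choice-crossing-swaps} — where that computation has already been carried out — is preferable.
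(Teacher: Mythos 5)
Your reduction via the uniqueness clause of the copairing is fine: it does suffice to verify $\frac{\sigma_0}{\chi_{U,A\oplus B}} = \frac{\chi_{U,A}}{\sigma_0}$ and $\frac{\sigma_1}{\chi_{U,A\oplus B}} = \frac{\chi_{U,B}}{\sigma_1}$, and your boundary bookkeeping is correct. The gap is in the claim that these equations are instances of Lemma~\ref{lem:choice-crossing-swaps}. That lemma is not the ``vertical naturality'' $\frac{\chi_{V,A}}{\corner{f}} = \frac{\corner{f}}{\chi_{V,B}}$ you are attributing to it. Note first that no equation of the shape $\frac{\chi_{V,A}}{\alpha} = \frac{\alpha}{\chi_{V,B}}$ can even be stated for arbitrary cells $\alpha : \cells{U}{A}{B}{W}$, since the two sides would have left boundaries $V \otimes U$ and $U \otimes V$, which differ in the free monoid of exchanges; yet the lemma is stated for all cells and its proof has base cases for the corner cells, which have nontrivial vertical boundaries. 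What the crossing-swaps lemma actually asserts is the equation letting a \emph{horizontal} wire $C$ slide past $\alpha$, relating $\chi_{U,C}$ and $\chi_{W,C}$ for the \emph{left and right} boundaries of $\alpha$ at a fixed $C$ (modulo braidings on the top and bottom); this is exactly the tensorial-strength square, which is why Section~\ref{subsec:stateful-transformations} says it holds in $\Dst{\C}$ ``precisely because cells are strong natural transformations.'' Specialised to a cell with trivial vertical boundaries such as $\corner{\sigma_0} : \cells{I}{A}{A\oplus B}{I}$, it degenerates via $\chi_{I,C} = 1_C$ to naturality of the braiding in $\A$, and says nothing whatsoever about $\chi_{U,A}$ versus $\chi_{U,A\oplus B}$ for nontrivial $U$. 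So your primary route does not produce the two required equations.

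What you need is the naturality of $\chi_{U,-}$ in its \emph{horizontal} argument with respect to the cells $\corner{\sigma_i}$, and this is not proved anywhere earlier in the paper --- establishing it (for the injections) is precisely the content of this lemma. Your fallback is therefore not an optional alternative but the actual proof, and it is the one the paper gives: a structural induction on $U$ with computational base cases at $C^\circ$ and $C^\bullet$ (unfolding the definition of $\chi_{C^\circ,-}$ in terms of corners and the braiding, then using naturality of $\sigma_{C,-}$ in $\A$ and functoriality of $\corner{-}$), trivial cases at $I$ and $U \otimes W$, and universal-property arguments for $U + W$ and $U \times W$. Those corner computations have not ``already been carried out'' in the proof of the crossing-swaps lemma --- the $\corner{f}$ base case there is a different, essentially trivial computation --- so they must be supplied here.
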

\begin{proof}
  By structural induction on $U$. In case $U = C^\circ$ we have $\frac{\sigma_0}{\chi_{C^\circ,A \oplus B}} = \frac{\chi_{C^\circ,A}}{\sigma_0}$ as in:
  \begin{mathpar}
    \includegraphics[height=1.6cm,align=c]{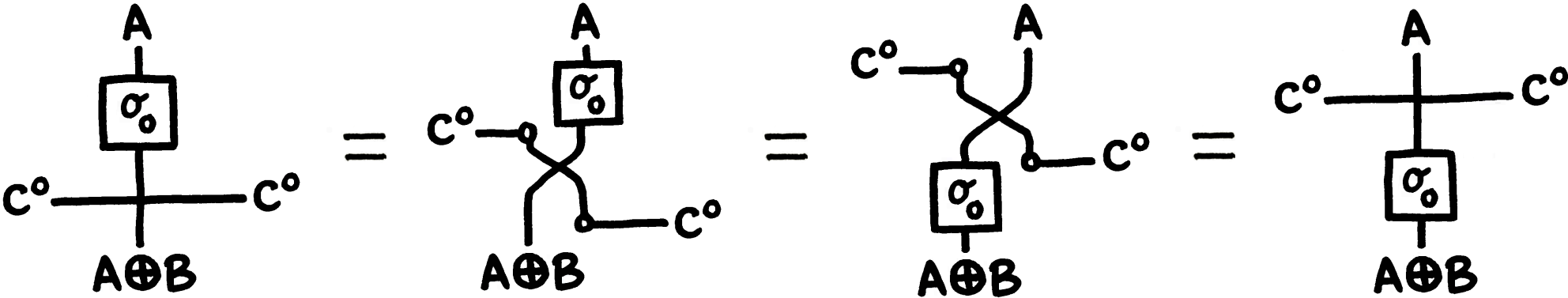}
  \end{mathpar}
  Similarly $\frac{\sigma_1}{\chi_{C^\circ,A \oplus B}} = \frac{\chi_{C^\circ,B}}{\sigma_1}$, and an analogous argument can be made for $U = C^\bullet$. If $U = I$ then we have:
  \begin{align*}
    & \frac{\sigma_0}{\chi_{I,A \oplus B}} = \frac{\sigma_0}{1_{A \oplus B}} = \frac{1_A}{\sigma_0} = \frac{\chi_{I,A}}{\sigma_0}
  \end{align*}
  Similarly $\frac{\sigma_1}{\chi_{I,A\oplus B}} = \frac{\chi_{I,B}}{\sigma_1}$. For $U \otimes W$ we have:
  \begin{align*}
    & \frac{\sigma_0}{\chi_{U \otimes W, A \oplus B}}
    = \frac{\sigma_0}{\frac{\chi_{U,A\oplus B}}{\chi_{W,A \oplus B}}}
    = \frac{\frac{\chi_{U,A}}{\chi_{W,A}}}{\sigma_0}
    = \frac{\chi_{U \otimes W,A}}{\sigma_0}
  \end{align*}
  Similarly, we have $\frac{\sigma_1}{\chi_{U \otimes W, A \oplus B}} = \frac{\chi_{U \otimes W,B}}{\sigma_1}$. For $U + W$ we have $\ip_0 \mid \frac{\sigma_0}{\chi_{U+W,A\oplus B}} = \ip_0 \mid \frac{\chi_{U+W,A}}{\sigma_0}$ as in:
  \begin{mathpar}
    \includegraphics[height=1.6cm,align=c]{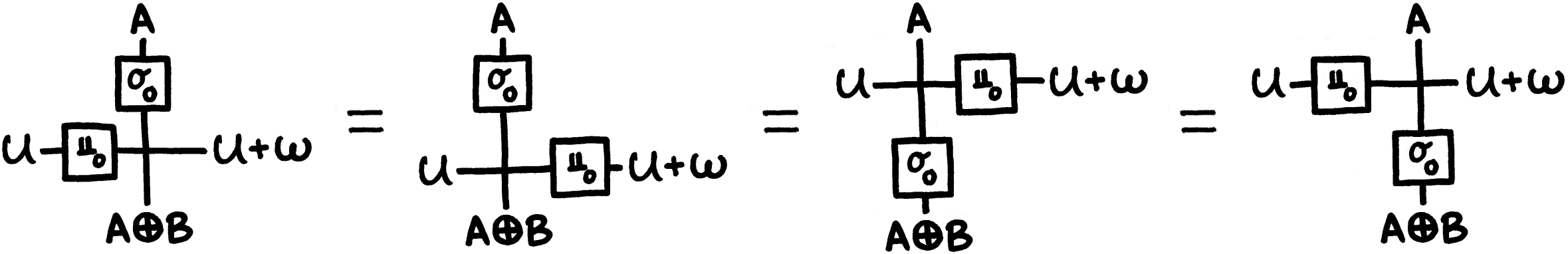}
  \end{mathpar}
  An analogous argument gives $\ip_1 \mid \frac{\sigma_0}{\chi_{U+W,A\oplus B}} = \ip_1 \mid \frac{\chi_{U+W,A}}{\sigma_0}$, and so $\frac{\sigma_0}{\chi_{U+W,A \oplus B}} = \frac{\chi_{U+W,A}}{\sigma_0}$ by the universal property of $+$. Similarly, we have $\frac{\sigma_1}{\chi_{U+W,A\oplus B}} = \frac{\chi_{U+W,B}}{\sigma_1}$. The case for $U \times W$ is similar to the case for $U + W$. 
\end{proof}

\subsection{A Model: Stateful Choice}\label{subsec:stateful-choice}
In this section we return to the double category of stateful transformations defined in Section~\ref{subsec:stateful-transformations} over a cartesian closed category $\C$. We show that if $\C$ has binary coproducts $\oplus$ which are distributive with respect to the cartesian product $\otimes$ then we can define the branching protocols of the free cornering with choice in the double category $\Dst{\C}$. In keeping with our strictness assumptions, the categorical product $\otimes$ is assumed to be strictly associative. As before, we do not need to ask this of our binary coproducts $\oplus$. Of course, the distributive laws hold only up to isomorphism. 

Let $\pl^{X,Y} : X \otimes Y \to X$ and $\pr^{X,Y} : X \otimes Y \to Y$ be the projection maps, and note that they are natural in $X$ and $Y$. For $f : A \to B$ and $g : A \to C$, we write $\langle f,g \rangle : A \to B \otimes C$ to be the unique morphism such that $\langle f,g \rangle \pl = f$ and $\langle f,g \rangle \pr = g$. We note that the injections $\sigma_0^{X,Y} : X \to X \oplus Y$, $\sigma_1^{X,Y} : Y \to X \oplus Y$ given by the coproduct structure are also natural in $X$ and $Y$. Finally, both $\otimes$ and $\oplus$ give bifunctors $\C$ sending $f : A \to C$ and $g: B \to D$ to $(f \otimes g) =  \langle \pl \, f , \pr \, g \rangle : A \otimes B \to C \otimes D$ and $(f \oplus g) = [f \, \sigma_0 , g \, \sigma_1]$. 

Given two strong endofunctors $(F, \tau^F)$ and $(G, \tau^F)$, we define their product $(F \times G, \tau^{F \times G})$ as follows: the endofunctor $F \times G$ is given by $(F \times G)(X) = FX \otimes GX$ and $(F \times G)(f) = F(f) \otimes G(f)$; and the strength $\tau^{F \times G}$ is given by:
\[
(F \times G)(X) \otimes A \xrightarrow{\langle (\pl \otimes A), (\pr \otimes A)\rangle} FX \otimes A \otimes GX \otimes A \xrightarrow{\tau^F_{X,A} \otimes \tau^G_{X, A}} (F \times G)(X \otimes A)
\]

Similarly, the endofunctor $F + G$ is given by $(F + G)(X) = FX \oplus GX$ and $(F + G)(f) = F(f) \oplus G(f)$. Its strength $\tau^{F+G}$ is given by:
\[
(F+G)(X) \otimes A \xrightarrow{\delta^r_{FX,GX,A}} (FX \otimes A) \oplus (GX \otimes A) \xrightarrow{\tau^F_{X,A} \oplus \tau^G_{X,A}} (F+G)(X \otimes A)
\]

These two constructions have the same universal properties as the corresponding definitions of choice protocols in Definition \ref{def:free-cornering-choice}.
Specifically, we have $2$-cells $\pi_0^{F,G} : \cells{F \times G}{1}{1}{F}$ and $\pi_1^{F,G} : \cells{F \times G}{1}{1}{F}$ with components $\pl^{FX,GX}$ and $\pr^{FX,GX}$ respectively. Given $\alpha : \cells{F}{A}{B}{G}$ and $\beta : \cells{F}{A}{B}{H}$, we define $(\alpha \times  \beta) : \cells{F}{A}{B}{G \times H}$ by:
\[
(\alpha \times  \beta)_X = \langle \alpha_X,\beta_X \rangle :  FX \otimes A \to (G \times H)(X \otimes B)
\]

Dually, we have $2$-cells $\ip_0^{F,G} : \cells{F}{1}{1}{F+G}$ and $\ip_1^{F,G} : \cells{G}{1}{1}{F+G}$ with components $\sigma^{FX,GX}_0$ and $\sigma^{FX,GX}_1$ respectively. Given $\alpha : \cells{F}{A}{B}{H}$ and $\beta : \cells{G}{A}{B}{H}$, we define $(\alpha \times  \beta) : \cells{F+G}{A}{B}{H}$ by using the distributivity natural transformation:
\[
(\alpha + \beta)_X = \delta^r[\alpha_X,\beta_X] : (F+G)(X) \otimes A \to H(X \otimes B)
\]

Finally, we have vertical injection cells $[\sigma_0^{A,B}] : \cells{I}{A}{A \oplus B}{I}$ and $[\sigma_1^{A,B}] : \cells{I}{B}{A \oplus B}{I}$. Given $\alpha : \cells{F}{A}{C}{G}$ and $\beta : \cells{F}{B}{C}{G}$, we define the corresponding copairing cell $[\alpha , \beta] : \cells{F}{A \oplus B}{C}{H}$ as in:
\[
[\alpha , \beta]_X = \delta^l[\alpha_X,\beta_X] : FX \otimes (A \oplus B) \to G(X \otimes C)
\]

These constructions have universal properties inherited from the distributive cartesian structure of $\C$, and it is straightforward to show that the projections and injections are strong. We show that these constructions are well-defined in the sense that they give 2-cells of $\mathsf{S}(\C)$:
\begin{lemma}
  If $\alpha$ and $\beta$ are $2$-cells of $\mathsf{S}(\C)$, then $\alpha \times \beta$, $\alpha + \beta$ and $[\alpha , \beta]$ are $2$-cells of $\mathsf{S}(\C)$.
\end{lemma}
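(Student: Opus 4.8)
The plan is to unwind what it means to be a $2$-cell of $\mathsf{S}(\C)$ and verify the two conditions it entails, separately for each of the three constructions. Recall that a cell $\gamma : \mathsf{S}(\C)\cells{U}{A}{B}{W}$ is by definition a \emph{strong} natural transformation $A^\circ \circ U \to W \circ B^\circ$. Thus for each of $\alpha \times \beta$, $\alpha + \beta$, and $[\alpha,\beta]$ I must check (a) that the stated family of components is natural in $X$, and (b) that it satisfies the strength square of Definition~\ref{def:strong-transformation} relative to the tensorial strengths of the relevant source and target composite functors. These strengths are assembled from the explicit formulas for $\tau^{F \times G}$ and $\tau^{F+G}$ recorded just above the statement, together with the composition formula for strengths from Section~\ref{subsec:stateful-transformations}. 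Throughout I will take as given, as the paragraph preceding the lemma asserts, that the projections, injections, and distributivity isomorphisms are themselves strong and natural.

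Condition (a) reduces in every case to the naturality of $\alpha$ and $\beta$ via the universal property of the relevant (co)product. For $(\alpha \times \beta)_X = \langle \alpha_X, \beta_X\rangle$ one postcomposes the candidate naturality square with the projections $\pl,\pr$ and appeals to uniqueness of pairing; for $(\alpha + \beta)_X = \delta^r[\alpha_X,\beta_X]$ and $[\alpha,\beta]_X = \delta^l[\alpha_X,\beta_X]$ one combines naturality of copairing with the naturality of $\delta^r$ and $\delta^l$. Condition (b) for the product is similarly clean and I would dispatch it next: the two legs of its strength square are maps into the product $(G \times H)(X\otimes B\otimes Y)$, so I postcompose each with $\pl$ and $\pr$, and using the explicit formula for $\tau^{G \times H}$ together with strength of the projections, each leg collapses onto the corresponding leg of the strength square for $\alpha$ (resp.\ $\beta$), which holds by hypothesis. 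Uniqueness of pairing closes this case.

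The sum and copairing cases are where I expect the real obstacle, since the distributivity isomorphisms $\delta^r$ and $\delta^l$ occur both inside the components \emph{and} inside the source/target strengths (the formula for $\tau^{F+G}$ is itself built from $\delta^r$). Here the two legs of the strength square emanate from a domain carrying a coproduct factor --- $(FX \oplus GX)\otimes A\otimes Y$ for $\alpha+\beta$, and $FX\otimes(A\oplus B)\otimes Y$ for $[\alpha,\beta]$ --- so I would establish equality by precomposing with the appropriate injections $\sigma_0,\sigma_1$ (tensored with identities) and invoking the universal property of $\oplus$. The crucial manoeuvre is that each such injection must be commuted past a distributivity isomorphism, which is exactly what Lemma~\ref{lem:injections-distribute} and its left-handed mirror supply, via $(\sigma_0 \otimes 1)\delta^r = \sigma_0$ and the dual identity for $\delta^l$. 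Once the injections are slid through, both $\delta^r[\alpha_X,\beta_X]$ and the strengths unfold so that the square reduces again to the strength squares for $\alpha$ and $\beta$. I therefore anticipate the crux to be precisely this bookkeeping --- correctly commuting $\delta^r,\delta^l$ with the injections and with the explicitly constructed composite strengths --- after which the verification is a diagram chase requiring no further ideas.
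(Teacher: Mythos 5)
Your proposal is correct, but it takes a genuinely different route from the paper's. The paper argues \emph{structurally}: it shows (via two commutative diagrams) that the pairing $\langle -,-\rangle$ and copairing $[-,-]$ of strong natural transformations are again strong, observes that $\delta^r$ and $\delta^l$ are themselves strong because their inverses $\mu^r,\mu^l$ are assembled from $\sigma_0,\sigma_1$ and $[-,-]$, and then concludes by noting that $\alpha\times\beta$, $\alpha+\beta$, and $[\alpha,\beta]$ are composites of strong natural transformations, which are closed under composition. You instead verify the strength square of Definition~\ref{def:strong-transformation} \emph{directly} for each of the three cells, reducing the equality of the two legs via the universal properties of the (distributed) products and coproducts: postcomposing with $\pl,\pr$ in the product case, and precomposing with injections and sliding them through $\delta^r,\delta^l$ via Lemma~\ref{lem:injections-distribute} and its left-handed analogue in the coproduct and copairing cases. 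You correctly identify that this injection-versus-distributivity bookkeeping is the only nontrivial step; note that for $[\alpha,\beta]$ the coproduct sits in the middle of $FX\otimes(A\oplus B)\otimes Y$, so you need distributivity on both sides, but this causes no difficulty in a distributive monoidal category. The paper's approach buys modularity (the strength of $\delta^r,\delta^l$ and of (co)pairings are reusable facts, and the slick inverse argument for $\delta^r,\delta^l$ avoids any chase), while yours avoids ever having to formulate $\delta^r,\delta^l$ as standalone strong transformations between named functors, at the cost of a longer hands-on diagram chase. One small caveat: you attribute to the paragraph preceding the lemma the assertion that the distributivity isomorphisms are strong, but the paper only asserts this for the projections and injections; fortunately your argument uses only the naturality of $\delta^r,\delta^l$ and the injection identities, so nothing is actually missing.
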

\begin{proof}
  We show that the results of the operations are strong by showing that they are a composition of strong natural transformations. First, note that the pairing and copairing operations $\langle -,- \rangle$ and $[-,-]$ when applied to strong natural transformations $\alpha, \beta$  create strong natural transformations, as in:
  \begin{mathpar}
  \begin{tikzcd}[column sep=large]
			FX \otimes A 
			\ar[r,"{\langle \alpha_X, \beta_X \rangle \otimes 1_A}"]
			\ar[dd,"{\tau^F_{X,A}}"']
			\ar[dr,"{\langle \alpha_X \otimes 1_A, \beta_X \otimes 1_A \rangle}" description] 
			&	(G \times H)(X) \otimes A
			\ar[d,"{\langle \pl \otimes 1_A, \pr \otimes 1_A\rangle}"]\\
			&	GX \otimes A \otimes HX \otimes A
			\ar[d,"{\tau^G_{X,A} \otimes \tau^H_{X,A}}"]\\
			F(X \otimes A)
			\ar[r,"{\langle \alpha_{X \otimes A}, \beta_{X \otimes A} \rangle}"']
			&	(G \times H)(X \otimes A)
  \end{tikzcd}
  
  \begin{tikzcd}[column sep=large]
			(F+G)(X) \otimes A
			\ar[r,"{[\alpha_X , \beta_X] \otimes 1_A}"]
			\ar[d,"{\delta^r}"']
			&	HX \otimes A
			\ar[dd,"{\tau^H_{X,A}}"]\\
			(FX \otimes A) \oplus (GX \otimes A)
			\ar[d,"{[\tau^F_{X,A} , \tau^G_{X,A}]}"']
			\ar[ur,"{[\alpha_X \otimes 1_A , \beta_X \otimes 1_A]}" description]
			& \\
			(F+G)(X \otimes A)
			\ar[r,"{[ \alpha_{X \otimes A}, \beta_{X \otimes A} ]}"']
			&	H(X \otimes A)
  \end{tikzcd}
  \end{mathpar}
	Secondly, given strong endofunctors $F$ and $G$, and objects $A$ and $B$, both of the natural transformations $\delta^r : (FX \oplus GX) \otimes A \to (FX \otimes A) \oplus (GX \otimes A)$ and $\delta^l : FX \otimes (A \oplus B) \to (FX \otimes A) \oplus (FX \otimes B)$ are strong since their inverses $\mu^r$ and $\mu^l$ can be constructed using $\sigma_0$, $\sigma_1$ and $[-,-]$.
	We conclude that $\alpha \times \beta$, $\alpha + \beta$ and $[\alpha , \beta]$ are strong since they are compositions of strong natural transformations.
\end{proof}

As before, we have that $\mathsf{S}(\C)$ is a proarrow equipment and also a monoidal double category. Moreover, the double functor of Lemma~\ref{lem:doublefunctor} extends to the setting with choice:
\begin{lemma}\label{lem:doublefunctorchoice}
  Let $\C$ be a cartesian closed category with distributive binary coproducts. Then the double functor $D : \corner{\C} \to \mathsf{S}(\C)$ extends to a double functor $D : \corner{\C}^\oplus \to \mathsf{S}(\C)$ as follows: for the vertical edge monoid we take $D(U + W) = DU + DW$ and $D(U \times W) = DU \times DW$; and for the new cells we take $D([\alpha,\beta]) = [D(\alpha),D(\beta)]$, $D(\alpha + \beta) = D(\alpha) + D(\beta)$, and $D(\alpha \times \beta) = D(\alpha) \times D(\beta)$. 
\end{lemma}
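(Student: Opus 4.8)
The plan is to invoke the universal property of $\corner{\C}^\oplus$ as a freely generated double category. A double functor out of a free double category is completely determined by its action on the generating edges and cells, and such an assignment extends to a genuine double functor precisely when the images of the generating cells satisfy, in the codomain, every equation imposed on the generators in the source. The action on the horizontal edge monoid and on the cells inherited from $\corner{\C}$ is already fixed by Lemma~\ref{lem:doublefunctor}, and the extension to the vertical edge monoid is unproblematic: the only monoid equations in $\ex{\C}_\oplus$ are the unit and associativity laws for $\otimes$, which $D$ respects exactly as in Lemma~\ref{lem:doublefunctor} (sending $\otimes$ to composition $\circ$ of strong functors, which is strictly associative and unital in $\mathsf{S}(\C)$), while $\times$ and $+$ carry no equations and are simply sent to the product $F \times G$ and coproduct $F + G$ of strong functors constructed in this section. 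Thus the entire content of the lemma reduces to checking that the generating equations of Definition~\ref{def:free-cornering-choice} governing the new cells hold after applying $D$, functoriality on all composites being then automatic from the free structure.

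There are three families of equations to discharge. First, for the product cells we must verify $(D\alpha \times D\beta) \mid \pi_0^{DU,DW} = D\alpha$ and the companion equation with $\pi_1$; since $D$ preserves horizontal composition this is exactly the statement that $D\alpha \times D\beta$ composed with the projections recovers its factors, which holds by the universal property of the product of strong functors --- componentwise, $\langle (D\alpha)_X, (D\beta)_X\rangle$ followed by $\pl$ (resp. $\pr$) is $(D\alpha)_X$ (resp. $(D\beta)_X$). Dually, for the coproduct cells we check $\ip_0^{DU,DW} \mid (D\alpha + D\beta) = D\alpha$ and its partner, which reduces componentwise to $\sigma_i$ followed by $\delta^r[(D\alpha)_X,(D\beta)_X]$, using the distributive structure. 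Finally, for the input-branching cells we must check $\frac{D(\corner{\sigma_0})}{[D\alpha,D\beta]} = D\alpha$ and the $\sigma_1$ analogue; here we use that $D(\corner{\sigma_0}) = [\sigma_0]$ (the vertical injection cell), so that the left-hand side is the vertical composite of $[\sigma_0]$ with the copairing $[D\alpha,D\beta]$, whose component $\delta^l[(D\alpha)_X,(D\beta)_X]$ precomposed with the injection recovers $(D\alpha)_X$ by the defining property of $[-,-]$ and the inverse relationship between $\delta^l$ and $\mu^l$.

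The genuinely delicate point, and the one I expect to be the main obstacle, is the copairing case: it interleaves the \emph{vertical} copairing operation (built from the distributor $\delta^l$) with the images of the coproduct-injection morphisms of $\C$, so one must confirm that $D$'s identification $D(\corner{\sigma_i}) = [\sigma_i]$ is compatible with the $\delta^l$-based definition of $[D\alpha,D\beta]_X$ in $\mathsf{S}(\C)$, i.e. that the relevant triangle for the distributive isomorphism commutes at the level of components. Once the universal properties of the $\mathsf{S}(\C)$-level product, coproduct, and copairing are in hand --- all established earlier in this section --- each verification is a routine componentwise computation with $\langle-,-\rangle$, $[-,-]$, $\delta^r$, $\delta^l$, and the (co)projections. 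With all generating equations verified, the universal property of the free cornering with choice yields the desired extension of $D$.
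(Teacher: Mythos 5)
Your proposal is correct and follows exactly the route the paper intends: the paper states this lemma without proof, having just established that $\times$, $+$, and $[-,-]$ in $\mathsf{S}(\C)$ are given by strong natural transformations and ``have the same universal properties as the corresponding definitions of choice protocols in Definition~\ref{def:free-cornering-choice}'', which is precisely the freeness argument you spell out, including the componentwise checks via $\langle-,-\rangle$, $\delta^r$, $\delta^l$, and Lemma~\ref{lem:injections-distribute}. The only caveat worth noting is that since Definition~\ref{def:free-cornering-choice} imposes \emph{uniqueness} clauses (conditional equations) and not just equations, well-definedness of $D$ on the quotient needs the full universal properties (uniqueness included) of the codomain constructions, which you do invoke in your closing paragraph even though your opening framing mentions only ``equations''.
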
\qed

We remark that this double functor maps the cells related to branching communication protocols in $\corner{\C}^\oplus$ to cells carrying similar structure in $\mathsf{S}(\C)$, and speculate that it will therefore preserve whatever kind of structure that turns out to be. In this sense, we consider $\mathsf{S}(\C)$ to give a model of $\corner{\C}^\oplus$.

\begin{remark}\label{rem:effects-interp-choice}
  We may consider the interpretation of branching protocols given by $-+-$ and $-\times-$ from the perspective of computational effects, extending Remark~\ref{rem:effects-interpretation}. As a computational effect, $F+G$ may be triggered by any program which would trigger $F$ or $G$. Dually, to resolve an effect $F+G$ its environment must be able to resolve both $F$ and $G$ independently. Similarly, to trigger $F\times G$ a program must be ready for the environment to resolve either of $F$ or $G$, and dually to resolve $F\times G$ the environment must be able to resolve either $F$ or $G$ independently. For $F+G$ the choice comes from the interior, while for $F \times G$ the choice comes from the environment.
\end{remark}
  
This concludes our discussion of the free cornering with choice. We proceed to add a notion of protocol iteration the free cornering.

\section{Adding Iteration to the Free Cornering}\label{sec:corner-iteration}
In this section we extend the free cornering with choice to include a notion of protocol iteration. In Section~\ref{subsec:free-iteration} we construct the \emph{free cornering with iteration} over a distributice monoidal category and discuss its interpretation. In Section~\ref{subsec:iteration-properties} we establish a number of elementary properties of the free cornering with iteration, and in Section~\ref{subsec:iteration-crossing} we define crossing cells in the free cornering with iteration, and show that they are well-behaved (extending Section~\ref{subsec:choice-crossing} to the new setting). Finally, in Section~\ref{subsec:stateful-iteration} we discuss iteration in terms of the double category of stateful transformations. 

\subsection{The Free Cornering with Iteration}\label{subsec:free-iteration}
In this section we extend the free cornering with choice to include a notion of protocol iteration. We begin by extending the monoid of exchanges with choice (Definition~\ref{def:choice-exchanges}) with unary operations $(-)^+$ and $(-)^\times$ representing iterated protocols:
\begin{definition}\label{def:iteration-exchanges}
    Let $\A$ be a symmetric monoidal category. The monoid $\ex{\A}_*$ of \emph{$\A$-valued exchanges with choice and iteration} has elements generated by:
  \begin{mathpar}
    \inferrule{A \in \A_0}{A^\circ \in \ex{\A}_*}
      
    \inferrule{A \in \A_0}{A^\bullet \in \ex{\A}_*}

    \inferrule{\text{}}{I \in \ex{\A}_*}

    \inferrule{U \in \ex{\A}_* \\ W \in \ex{\A}_*}{U \otimes W \in \ex{\A}_*}
     
    \inferrule{U \in \ex{\A}_* \\ W \in \ex{\A}_*}{U \times W \in \ex{\A}_*}
    
    \inferrule{U \in \ex{\A}_* \\ W \in \ex{\A}_*}{U + W \in \ex{\A}_*}
    
    \inferrule{U \in \ex{\A}_*}{U^\times \in \ex{\A}_*}

    \inferrule{U \in \ex{\A}_*}{U^+ \in \ex{\A}_*}
  \end{mathpar}
  subject to the following equations:
  \begin{mathpar}
  I \otimes U = U

  U \otimes I = U

  (U \otimes W) \otimes V = U \otimes (W \otimes V)

  U^\times = I \times (U \otimes U^\times)

  U^+ = I + (U \otimes U^+)
  \end{mathpar}
Notice in particular that  $\ex{\A}_\oplus$ embeds into $\ex{\A}_*$.
\end{definition}

We extend our interpretation of elements of $\ex{\A}_\oplus$ to elements of $\ex{\A}_*$. To do so we must interpret $U^\times$ and $U^+$. We begin with $U^\times$. Our interpretation of $U^\times$ is informed by the equation $U^\times = I \times (U \otimes U^\times)$. Recall that $V \times W$ is the protocol in which the right participant chooses whether to continue as $V$ or $W$. It follows that $U^\times$ is the protocol in which the right participant chooses whether to continue as $I$ or $U \otimes U^\times$. Thus, $U^\times$ is the protocol in which the right participant chooses whether to do nothing, or to do $U$ and then $U^\times$ again. Put another way, $U^\times$ is the iterated version of $U$, in which the right participant decides when to stop iterating. Our interpretation of $U^+$ is dual. Everything is as above, except that the roles of the right and left participant are swapped. 

For example, suppose $A,B \in \A_0$. For each of the following exchanges, call the left participant $\alice$ and the right participant $\bob$, as before. Now, consider:
\begin{itemize}
\item To carry out $(A^\circ)^\times = I \times (A^\circ \otimes (A^\circ)^\times) \in \ex{\A}_*$, first $\bob$ chooses which of $I$ and $A^\circ \otimes (A^\circ)^\times$ will happen. If $\bob$ chooses $I$ then the exchange ends. If $\bob$ chooses $A^\circ \otimes (A^\circ)^\times$ then $\alice$ sends $\bob$ and instance of $A$ and the two of them carry out $(A^\circ)^\times$ again from the beginning. In other words, $\bob$ can request any number of instances of $A$ from $\alice$.
\item To carry out $(A^\circ \times B^\bullet)^+ \in \ex{\A}_*$, first $\alice$ chooses which of $I$ and $(A^\circ \times B^\bullet) \otimes (A^\circ \times B^\bullet)^+$ will happen. If $\alice$ chooses $I$ then the exchange ends. If $\alice$ chooses $(A^\circ \times B^\bullet) \otimes (A^\circ \times B^\bullet)^+$ then $\bob$ chooses which of $A^\circ$ and $B^\bullet$ will happen, with $\alice$ sending $\bob$ an instance of $A$ if $\bob$ chooses $A^\circ$ and $\bob$ sending $\alice$ an instance of $B$ if he chooses $B^\bullet$. Then, the two of them carry out $(A^\circ \times B^\bullet)^+$ again from the beginning.
\end{itemize}

The key idea in our notion of iteration is the equation $U^\times = I \times (U \otimes U^\times)$, which allows us to use $\pi_0 : U^\times \to I$ and $\pi_1 : U^\times \to U \otimes U^\times$ to express properties of $U^\times$ . Dually, we will be able to use $\ip_0 : I \to U^+$ and $\ip_1 : U \otimes U^+ \to U^+$ to express properties of $U^+$. We proceed to extend the free cornering with the notion of iteration suggested by this:

\begin{definition}\label{def:iter-doub}
  Let $\A$ be a distributive monoidal category. We define the \emph{free cornering with iteration of $\A$}, written $\corner{\A}^*$, to be the free single-object double category with horizontal edge monoid $(\A_0,\otimes,I)$, vertical edge monoid $\ex{\A}_*$, and generating cells and equations consisting of:
  \begin{itemize}
  \item The generating cells and equations of $\corner{\A}^\oplus$ (Definition~\ref{def:free-cornering-choice}).

\item For each trio of cells $\alpha : \corner{\A}^*\cells{V}{A}{A}{U}$, $f : \corner{\A}^*\cells{W}{A}{B}{K}$, and $g : \corner{\A}^*\cells{W}{I}{I}{V \otimes W}$ a unique cell $\alpha^\times_{f,g} : \corner{\A}^*\cells{W}{A}{B}{U^\times \otimes K}$ satisfying:
    \begin{mathpar}
      \includegraphics[height=1.2cm,align=c]{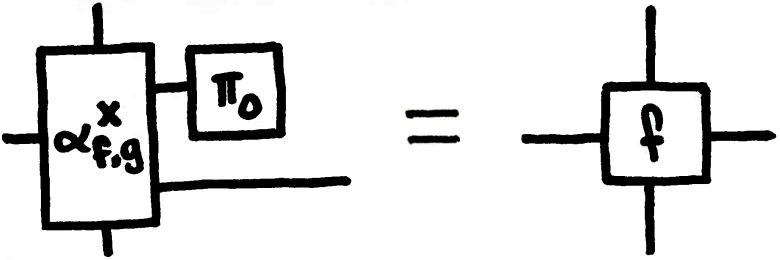}

      \includegraphics[height=1.2cm,align=c]{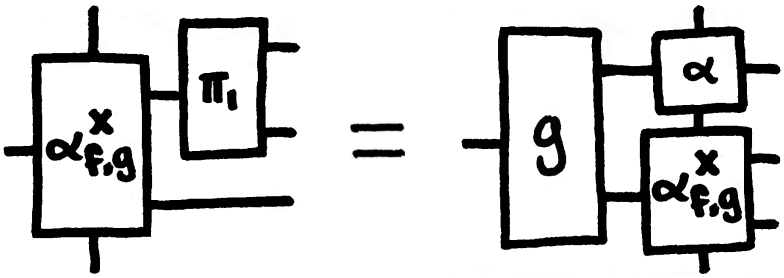}
    \end{mathpar}
  \item Dually, for each trio of cells $\alpha : \corner{\A}^*\cells{U}{A}{A}{V}$, $f : \corner{\A}^*\cells{K}{A}{B}{W}$, and $g : \corner{\A}^*\cells{V \otimes W}{I}{I}{W}$ a unique cell $\alpha^+_{f,g} : \corner{\A}^*\cells{U^+ \otimes K}{A}{B}{W}$ satisfying:
  \begin{mathpar}
    \includegraphics[height=1.2cm,align=c]{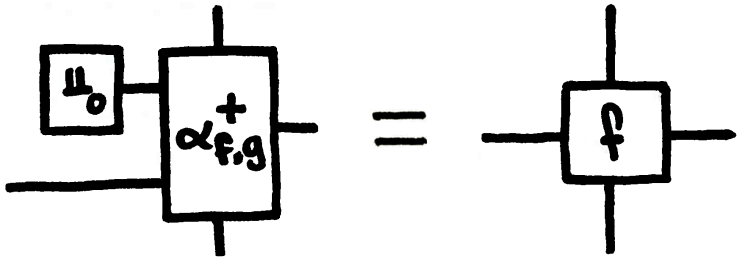}

    \includegraphics[height=1.2cm,align=c]{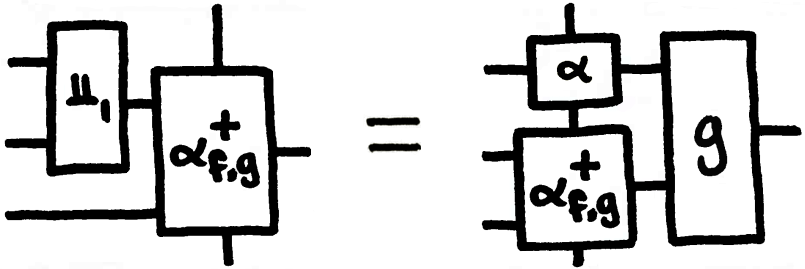}
  \end{mathpar}
\end{itemize}
\end{definition}

We extend our interpretation of cells of $\corner{\A}^\oplus$ as interacting processes to cells of $\corner{\A}^*$. Recall that $U^\times = I \times (U \otimes U^\times)$ is the exchange in which the right participant chooses whether the exchange is over (via $\pi_0$), or is to continue as $U \otimes U^\times$ (via $\pi_1$). The cells $\alpha^\times_{f,g}$ enable our procedures to be the left participant of such exchanges, reacting to the choices of the right participant as specified by the equations. Of course, $\alpha^+_{f,g}$ is the dual version for exchanges $U^+$, with the roles of the left and right participants swapped.

\begin{remark}\label{rem:simple-passive-iteration}
  In developing an intuition about this it is helpful to consider the following, simpler forms of these cell formation rules: Let $\alpha : \cells{U}{A}{A}{W}$. We define:
\begin{mathpar}
  \alpha^\times = \alpha^\times_{(\pi_0 \vert 1_A),\pi_1} : \cells{U^\times}{A}{A}{W^\times}

  \alpha^+ = \alpha^+_{(1_A \vert \ip_0),\ip_1} : \cells{U^+}{A}{A}{W^+}
\end{mathpar}
Then $\alpha^\times$ and is the unique cell such that:
\begin{mathpar}
  \alpha^\times \mid \pi_0 = \pi_0 \mid 1_A

  \alpha^\times \mid \pi_1 = \pi_1 \mid \frac{\alpha}{\alpha^\times}
\end{mathpar}
and similarly $\alpha^+$ is the unique cell such that:
\begin{mathpar}
  \ip_0 \mid \alpha^+ = 1_A \mid \ip_0

  \ip_1 \mid \alpha^+ = \frac{\alpha}{\alpha^+} \mid \ip_1
\end{mathpar}
Interpreted as an interacting process, $\alpha^\times$ reacts to the choice (made along the right boundary) to stop iterating by doing nothing to its inputs and propagating this choice along its left boundary. Similarly, $\alpha^\times$ reacts to the choice to continue iterating, performing $U$ once before performing $U^\times$ again, by acting as $\alpha$ once and then continuing as $\alpha^\times$, propagating this choice along its left boundary. The interpretation of $\alpha^+$ is similar. We note that both $(-)^+$ and $(-)^\times$ give functors $\bh\,\corner{\A}^* \to \bh\,\corner{\A}^*$. 
\end{remark}

\begin{remark}\label{rem:iter-coinduction}
  Before moving on we note that cells $\alpha^\times_{f,g}$, and $\alpha^+_{f,g}$ admit a coinductive reasoning principle. For $\alpha^\times_{f,g}$, we have that if $\gamma$ satisfies:
  \begin{mathpar}
    \gamma \mid \frac{\pi_0}{id} = f

    \gamma \mid \frac{\pi_1}{id} = g \mid \frac{\alpha}{\gamma}
  \end{mathpar}
  then $\gamma = \alpha^\times_{f,g}$, which we say holds by \emph{coinduction}. The coinductive reasoning principle for cells $\alpha^+_{f,g}$ is similar. 
\end{remark}

\begin{example}[Mealy Machines]
  Say that a \emph{mealy machine in a monoidal category $\A$} consists of a morphism $m : A \otimes S \to S \otimes B$. Then the classical notion of mealy machine is recoverable by considering mealy machine in the category of finite sets, with $S$ the set of states, $A$ the input alphabet, and $B$ the output alphabet. Mealy machines are usually understood to operate on a sequence of inputs drawn from $A$, producing a sequence of outputs drawn from $B$. The state of the machine is fed forward to future iterations.

  If $m : A \otimes S \to S \otimes B$ in $\A$ then let $M : \cells{A^\circ}{S}{S}{B^\circ}$ be the cell:
  \begin{mathpar}
    \includegraphics[height=1.2cm,align=c]{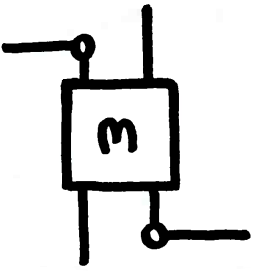}
  \end{mathpar}
  Then the cell $M^+ : \cells{(A^\circ)^+}{S}{S}{(B^\circ)^+}$ exhibits the behaviour of the process that the Mealy machine $m$ is intended to define, as in:
  \begin{mathpar}
    \includegraphics[height=1.2cm,align=c]{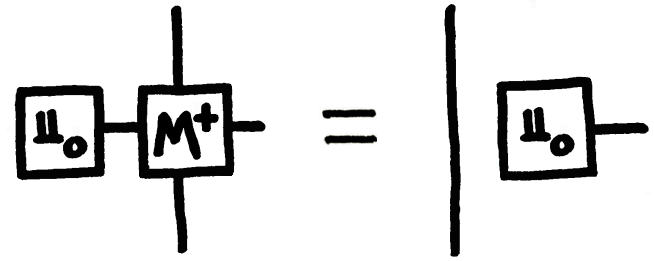}

    \includegraphics[height=1.5cm,align=c]{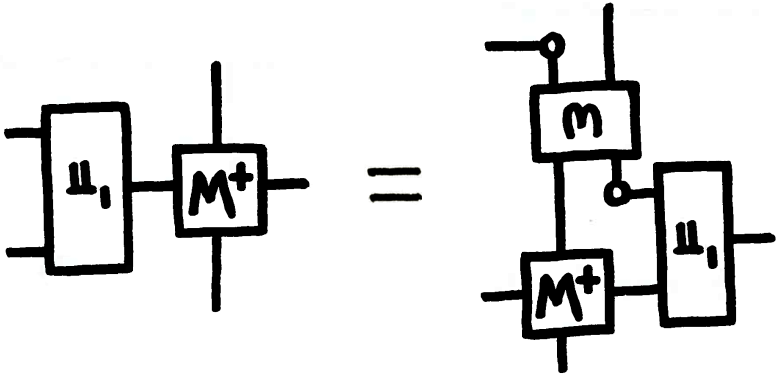}
  \end{mathpar}
  that is, if the there is no more input then the machine produces no more output, and if there is further input then the machine produces output accoring to $m$ and updates its internal state. 
\end{example}

\begin{example}[Memory Cell]
  Consider the cell $H = (\frac{\putr^A}{\getr^A})^\times : \cells{I}{A}{A}{(A^\circ \otimes A^\bullet)^\times}$. This cell behaves as follows:
  \begin{mathpar}
    \includegraphics[height=1.2cm,align=c]{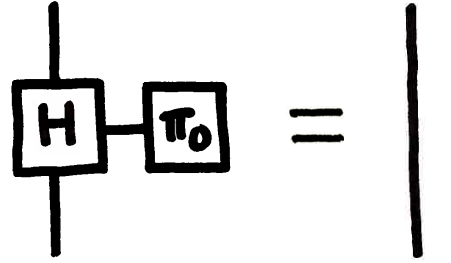}

    \includegraphics[height=1.2cm,align=c]{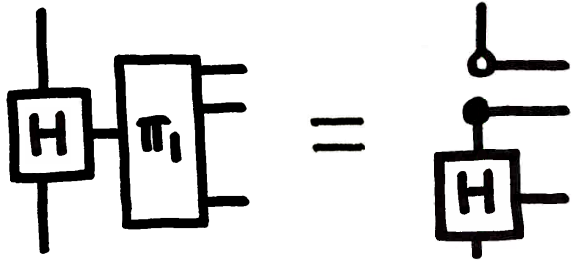}
  \end{mathpar}
  Think of $H$ as a simple sort of memory cell that stores a value of type $A$. When called upon by the environment along its right boundary, the cells outputs its contents, waits for its new contents to be supplied, and waits for further instructions (above right). The cell can also be told to stop (above left). 
\end{example}

\begin{example}\label{ex:bread}
  Suppose our base category $\A$ has objects $\texttt{bread}$ and $\texttt{\$}$, as well as objects representing stacks of each: $S_\texttt{bread} \cong I \oplus (\texttt{bread} \otimes S_\texttt{bread})$ and $S_\texttt{\$} \cong I \oplus (\texttt{\$} \otimes S_\texttt{\$})$ as in Section~\ref{subsec:distributive-branching}. We construct a process that sells bread, $\texttt{sales} : \corner{\A}^*\cells{(\texttt{\$}^\circ \otimes (\texttt{\$}^\bullet \times \texttt{bread}))^+}{S_\texttt{bread} \otimes S_\texttt{\$}}{S_\texttt{bread} \otimes S_\texttt{\$}}{I}$ as follows: Let $\texttt{sale} : \corner{\A}^*\cells{\texttt{\$}^\circ \otimes (\texttt{\$}^\bullet \times \texttt{bread})}{S_\texttt{bread} \otimes S_\texttt{\$}}{S_\texttt{bread} \otimes S_\texttt{\$}}{I}$ be the cell below on the left, with $\gamma_0$ and $\gamma_1$ the cells below on the right.
  \begin{mathpar}
    \includegraphics[height=1.7cm,align=c]{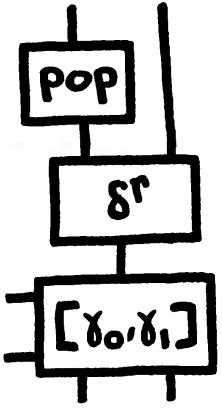}

    \includegraphics[height=1.2cm,align=c]{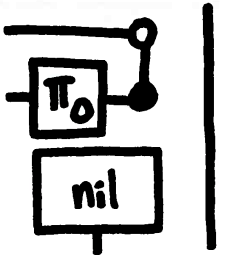}

    \includegraphics[height=1.2cm,align=c]{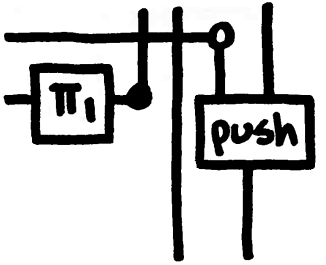}
  \end{mathpar}
  Then define $\texttt{sales} = \texttt{sale}^+_{1_{S_\texttt{bread}\otimes S_\texttt{\$}}, \square_I}$. We have: 
  \begin{mathpar}
    \includegraphics[height=1.2cm,align=c]{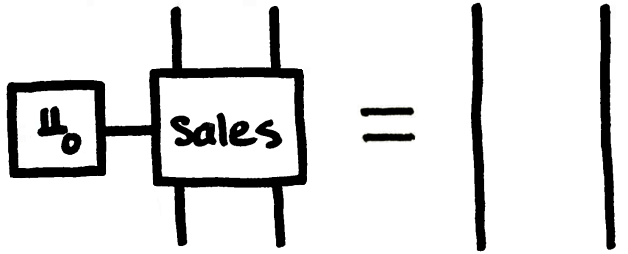}

    \includegraphics[height=1.7cm,align=c]{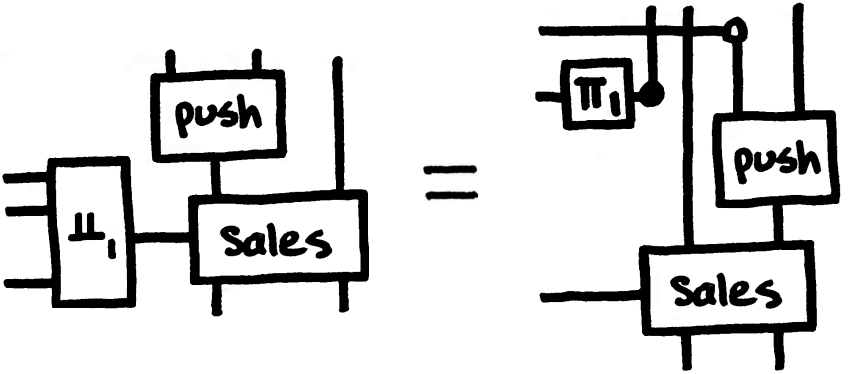}

    \includegraphics[height=1.7cm,align=c]{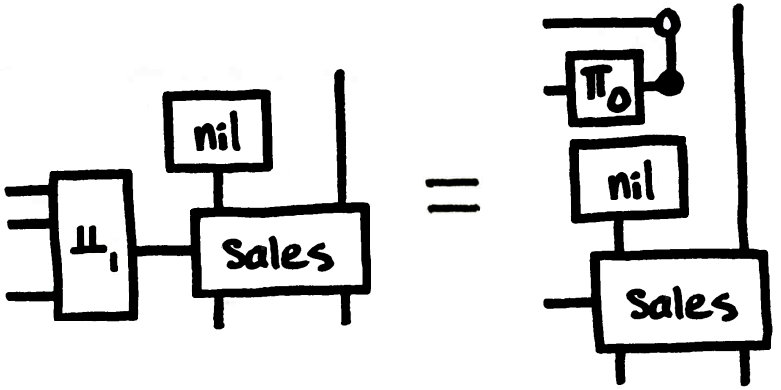}
  \end{mathpar}
  We imagine the left boundary of \texttt{sales} as aa sort of queue of customers, waiting to purchase bread. If there are no more customers ($\ip_0$) then \texttt{sales} simply retains its stacks of \texttt{bread} and \texttt{\$}. If there is at least one more customer ($\pi_1$) then \texttt{sales} receives $\texttt{\$}$ from the first customer in line. If no bread is available (the stack of bread is $\texttt{nil}$) then the money is returned. Otherwise the customer receives the first piece of bread on the stack. This process is repeated until no customers remain.
\end{example}

\subsection{Elementary Properties}\label{subsec:iteration-properties}
We proceed to establish some elementary properties of the free cornering with iteration. First we note that the properties of $\corner{\A}^\oplus$ established in Section~\ref{subsec:choice-properties} all hold in the free cornering with iteration, specifically Lemmas~\ref{lem:copairing-coincide}, \ref{lem:copairing-absorbs}, \ref{lem:branching-products-coproducts}, and \ref{lem:moral-equivalence-choice} all hold in $\corner{\A}^*$. 

Moving on to elementary properties of $\corner{\A}^*$ specifically, we show that $U^\times$ and $U^+$ arise as (co)algebras of a functor on the category of horizontal cells:
\begin{lemma}\label{lem:algebra-coalgebra}
  Consider the category $\bh\,\corner{\A}^*$ of horizontal cells of the free cornering with iteration. For all objects $U$ of $\bh\,\corner{\A}^*$, we have:
  \begin{enumerate}[(i)]
  \item $(U^\times,id_{U^\times} : U^\times \to U^\times = I \times (U \otimes U^\times))$ is the final coalgebra of the functor 
    \[
    I \times (U \otimes -) : \bh\,\corner{\A}^* \to \bh\,\corner{\A}^*
    \]
  \item $(U^+,id_{U^+} : U^+ \to U^+ = I + (U \otimes U^+))$ is the initial algebra of the functor
    \[
    I + (U \otimes -) : \bh\,\corner{\A}^* \to \bh\,\corner{\A}^*
    \]
  \end{enumerate}
\end{lemma}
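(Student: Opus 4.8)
The plan is to reason inside the category $\bh\,\corner{\A}^*$, recalling that a morphism $V \to W$ there is a cell $\cells{V}{I}{I}{W}$, that composition is horizontal composition of cells, that the identity on $V$ is $id_V$, and that the monoidal product is vertical composition — so that on a morphism $h$ the endofunctor $U \otimes -$ acts as $\frac{id_U}{h}$. By Lemma~\ref{lem:branching-products-coproducts} the operations $\times$ and $+$ are the binary product and coproduct in $\bh\,\corner{\A}^*$ (these properties transport to $\corner{\A}^*$, as noted at the start of Section~\ref{subsec:iteration-properties}), so both $F = I \times (U \otimes -)$ and $G = I + (U \otimes -)$ are genuine endofunctors. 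The defining equations $U^\times = I \times (U \otimes U^\times)$ and $U^+ = I + (U \otimes U^+)$ of Definition~\ref{def:iteration-exchanges} say exactly that $id_{U^\times}$ is an $F$-coalgebra structure and $id_{U^+}$ a $G$-algebra structure; the work is to establish the two universal properties.

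For (i), fix an $F$-coalgebra $c : V \to I \times (U \otimes V)$. Because its codomain is a product, a morphism $h : V \to U^\times$ is a coalgebra morphism, i.e.\ satisfies $h = c \mathbin{;} F(h)$ with $F(h) = id_I \times (U \otimes h)$, precisely when its composites with the projections $\pi_0 : U^\times \to I$ and $\pi_1 : U^\times \to U \otimes U^\times$ agree with those of $c \mathbin{;} F(h)$. Unfolding the product laws, this single equation is equivalent to the pair
\[
h \mid \pi_0 = c \mid \pi_0
\qquad\text{and}\qquad
h \mid \pi_1 = c \mid \pi_1 \mid \tfrac{id_U}{h}.
\]
These are exactly the defining equations of the cell $(id_U)^\times_{f,g}$ of Definition~\ref{def:iter-doub} instantiated with $A = B = K = I$, $\alpha = id_U$, $f = c \mid \pi_0$, and $g = c \mid \pi_1$: since $K = I$ the general equations $\gamma \mid \frac{\pi_0}{id} = f$ and $\gamma \mid \frac{\pi_1}{id} = g \mid \frac{\alpha}{\gamma}$ reduce to $h \mid \pi_0 = f$ and $h \mid \pi_1 = g \mid \frac{id_U}{h}$. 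This produces the required morphism $h$ (the anamorphism), and its uniqueness as a coalgebra morphism is exactly the coinductive reasoning principle of Remark~\ref{rem:iter-coinduction}.

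Part (ii) is formally dual. A $G$-algebra $a : I + (U \otimes V) \to V$ is determined by its precomposites $\ip_0 \mid a$ and $\ip_1 \mid a$ with the coproduct injections, and for $h : U^+ \to V$ the algebra-morphism condition $h = G(h) \mathbin{;} a$, with $G(h) = id_I + (U \otimes h)$, unfolds into the pair $\ip_0 \mid h = \ip_0 \mid a$ and $\ip_1 \mid h = \frac{id_U}{h} \mid \ip_1 \mid a$. These coincide with the defining equations of $(id_U)^+_{f,g}$ for $f = \ip_0 \mid a$ and $g = \ip_1 \mid a$, giving the catamorphism, with uniqueness following from the dual coinductive reasoning principle for cells $\alpha^+_{f,g}$.

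I expect the only delicate point to be the bookkeeping that turns each abstract (co)algebra-morphism equation into the corresponding cell equations: computing the functorial action $F(h) = id_I \times (U \otimes h)$ (and dually for $G$), remembering that the tensor of $\bh\,\corner{\A}^*$ is vertical composition so that $U \otimes h = \frac{id_U}{h}$, and keeping all composition in diagrammatic order. Once the match with the defining equations of $(id_U)^\times_{f,g}$ and $(id_U)^+_{f,g}$ is made, existence is immediate from Definition~\ref{def:iter-doub} and uniqueness from Remark~\ref{rem:iter-coinduction}; no further computation is needed.
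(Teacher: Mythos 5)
Your proposal is correct and follows essentially the same route as the paper: both identify the anamorphism as $(id_U)^\times_{c\mid\pi_0,\,c\mid\pi_1}$ (with $A=B=K=I$), obtain existence from the defining equations of Definition~\ref{def:iter-doub}, and obtain uniqueness from the coinduction principle of Remark~\ref{rem:iter-coinduction}, dually for $(-)^+$. Your observation that the coalgebra-morphism square unfolds, via the product projections, into exactly the two defining equations of the cell is precisely the content of the paper's two displayed computations, just packaged slightly more efficiently.
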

\begin{proof}
\begin{enumerate}[(i)]
\item Suppose $(W,h : W \to I \times (U \otimes W))$ is a coalgebra for $(I \times (U \otimes -)$. We must show that there is a unique coalgebra morphism $(W,h) \to (U^\times,id_{U^\times})$ in $\bh\,\corner{\A}^*$. Define $\alpha = (id_U)^\times_{(h\mid\pi_0),(h\mid\pi_1)} : \cells{W}{I}{I}{U^\times}$. We must show that $\alpha$ gives a morphism of coalgebras. That is, we must show that in $\bh\,\corner{\A}^*$ we have:
  \[
  \begin{tikzcd}
    W \ar[d,"\alpha"']\ar[r,"h"] & I \times (U \otimes W) \ar[d,"(\pi_0 \times (\pi_1\mid\frac{id_U}{\alpha})) = (I \times (U \otimes -))(\alpha)"] \\
    U^\times \ar[r,"id_{U^\times}"']& U^\times = I \times (U \otimes U^\times)
  \end{tikzcd}
  \]
  This is because we have:
  \begin{align*}
    & h\mid(\pi_0 \times (\pi_1\mid\frac{id_U}{\alpha}))\mid\pi_0 
    = h\mid\pi_0
  \end{align*}
  and
  \begin{align*}
    & h\mid(\pi_0 \times (\pi_1\mid\frac{id_U}{\alpha}))\mid\pi_1
    = h\mid\pi_1\mid\frac{id_U}{\alpha}
  \end{align*}
  and so by coinduction we have that $h \mid (\pi_0 \times (\pi_1 \mid \frac{id_U}{\alpha})) = (1_U)^\times_{(h\mid\pi_0),(h\mid\pi_1)} = \alpha$, and so $\alpha : (W,h) \to (U^\times,id_{U^\times})$ is a morphism of coalgebras. It remains to show that $\alpha$ is the unique such coalgebra morphism. To that end, suppose that $\beta : (W,h) \to (U^\times,id_{U^\times})$ is a coalgebra morphism. That is, suppose we have:
  \[
  \begin{tikzcd}
    W \ar[d,"\beta"'] \ar[r,"h"] & I \times (U \otimes W) \ar[d,"(\pi_0 \times (\pi_1\mid\frac{id_U}{\beta})) = (I \times (U \otimes -))(\beta)"]\\
    U^\times \ar[r,"id_{U^\times}"] & U^\times = I \times (U \otimes U^\times)
  \end{tikzcd}
  \]
  Then we have:
  \begin{align*}
    & h \mid (\pi_0 \times (\pi_1\mid\frac{id_U}{\beta}))\mid\pi_0
    = h\mid\pi_0
  \end{align*}
  and
  \begin{align*}
    h\mid(\pi_0 \times (\pi_1\mid\frac{id_U}{\beta}))\mid\pi_1
    = h \mid \pi_1\mid\frac{id_U}{\beta}
  \end{align*}
  and so by coinduction we have that $\beta = (1_U)^\times_{(h\mid\pi_0),(h\mid\pi_1)} = \alpha$, as required.
\item Similar to the proof of (i).
\end{enumerate}
\end{proof}

Further, we exhibit (co)monoid structures on our iterated protocol types and show that they enjoy a kind of naturality:
\begin{lemma}\label{lem:horizontal-comonoid}
  For all objects $U$ of $\bh\,\corner{\A}^*$:
  \begin{enumerate}[(i)]
  \item $(U^\times, \Delta^\times_U, \pi_0)$ is a comonoid in $\bh\,\corner{\A}^*$ where $\Delta^\times_U = (id_U)^\times_{id_{U^\times},\pi_1}$. Dually, $(U^+,\nabla^+_U,\ip_0)$ is a monoid in $\bh\,\corner{\A}^*$ where $\nabla^+_U = (id_U)^+_{id_{U^+},\ip_1}$. 
  \item For any $h : \cells{U}{I}{I}{W}$,  $\Delta^\times_U \mid \frac{h^\times}{h^\times} = h^\times \mid \Delta^\times_W$. Dually, $\nabla^+_U \mid h^+ = \frac{h^+}{h^+}\mid \nabla^+_W$. 
  \end{enumerate}
\end{lemma}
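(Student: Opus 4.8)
The plan is to prove everything by appeal to the coinductive characterisation of the iterated cells recorded in Remark~\ref{rem:iter-coinduction}, reducing each equation to the check that both sides satisfy the two defining equations of a common cell $\alpha^\times_{f,g}$ (resp.\ $\alpha^+_{f,g}$), after which uniqueness forces them to agree. Throughout I work in $\bh\,\corner{\A}^*$, where composition is horizontal composition $\mid$ and the monoidal product is vertical composition $\frac{-}{-}$ with unit $I$. Under this dictionary $\pi_0 : U^\times \to I$ is the counit, $\Delta^\times_U : U^\times \to U^\times \otimes U^\times$ the comultiplication, and the three comonoid laws read $\Delta^\times_U \mid \frac{\pi_0}{id_{U^\times}} = id_{U^\times}$ (left counit), $\Delta^\times_U \mid \frac{id_{U^\times}}{\pi_0} = id_{U^\times}$ (right counit), and $\Delta^\times_U \mid \frac{\Delta^\times_U}{id_{U^\times}} = \Delta^\times_U \mid \frac{id_{U^\times}}{\Delta^\times_U}$ (coassociativity). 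I will repeatedly use that $\Delta^\times_U = (id_U)^\times_{id_{U^\times},\pi_1}$ and $id_{U^\times} = (id_U)^\times_{\pi_0,\pi_1}$, the latter being the unique cell $\gamma$ with $\gamma \mid \pi_0 = \pi_0$ and $\gamma \mid \pi_1 = \pi_1 \mid \frac{id_U}{\gamma}$.

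For part (i), the left counit law is immediate: it is literally the first defining equation of $\Delta^\times_U = (id_U)^\times_{id_{U^\times},\pi_1}$. For the right counit I would set $\gamma = \Delta^\times_U \mid \frac{id_{U^\times}}{\pi_0}$ and verify that $\gamma$ satisfies the two equations characterising $id_{U^\times}$: composing $\gamma$ with $\pi_0$ (resp.\ $\pi_1$) and using interchange to slide the trailing projection through the vertical composite $\frac{id_{U^\times}}{\pi_0}$ reduces the computation to defining equation (a) (resp.\ (b)) of $\Delta^\times_U$, so $\gamma = id_{U^\times}$ by coinduction.

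For coassociativity I would show that both bracketings coincide with the single cell $\Xi = (id_U)^\times_{\Delta^\times_U,\pi_1} : U^\times \to U^\times \otimes U^\times \otimes U^\times$, whose defining equations are $\Xi \mid \frac{\pi_0}{id_{U^\times \otimes U^\times}} = \Delta^\times_U$ and $\Xi \mid \frac{\pi_1}{id_{U^\times \otimes U^\times}} = \pi_1 \mid \frac{id_U}{\Xi}$. For each of $\Delta^\times_U \mid \frac{\Delta^\times_U}{id_{U^\times}}$ and $\Delta^\times_U \mid \frac{id_{U^\times}}{\Delta^\times_U}$, sliding the outer projection inward by interchange and re-associating the nested vertical composites turns the verification into two applications of equations (a) and (b). The monoid statement $(U^+,\nabla^+_U,\ip_0)$ is formally dual, obtained by running the same argument with the coinductive principle for $\alpha^+_{f,g}$ and the injections $\ip_0,\ip_1$ in place of $\pi_0,\pi_1$. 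For part (ii), I would observe that both $\Delta^\times_U \mid \frac{h^\times}{h^\times}$ and $h^\times \mid \Delta^\times_W$ are cells $U^\times \to W^\times \otimes W^\times$, and show each equals $\Psi = h^\times_{h^\times,\pi_1}$, the unique cell with $\Psi \mid \frac{\pi_0}{id_{W^\times}} = h^\times$ and $\Psi \mid \frac{\pi_1}{id_{W^\times}} = \pi_1 \mid \frac{h}{\Psi}$. For $h^\times \mid \Delta^\times_W$ this follows directly from the left counit of $\Delta^\times_W$ together with equation (b) for $\Delta^\times_W$ and the defining equations $h^\times \mid \pi_0 = \pi_0$, $h^\times \mid \pi_1 = \pi_1 \mid \frac{h}{h^\times}$ of Remark~\ref{rem:simple-passive-iteration}; for $\Delta^\times_U \mid \frac{h^\times}{h^\times}$ one pushes the projections through both copies of $h^\times$ and then applies equations (a) and (b) of $\Delta^\times_U$. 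The dual naturality of $\nabla^+$ is handled the same way.

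The conceptual content is carried entirely by the uniqueness (coinduction) principle; the only genuine labour is bookkeeping. Specifically, the recurring obstacle is the need to re-slice nested vertical composites via the interchange law so that a projection $\pi_0$ or $\pi_1$ resident on the outer tensor factor can be transported onto the appropriate inner copy of $U^\times$ (for instance, rewriting $\frac{\pi_0}{id_{U^\times\otimes U^\times}}$ as $\frac{\frac{\pi_0}{id_{U^\times}}}{id_{U^\times}}$, or $\frac{id_{U\otimes U^\times}}{\pi_0}$ as $\frac{id_U}{\frac{id_{U^\times}}{\pi_0}}$). Once the two cells being composed are sliced compatibly, interchange and the defining equations of $\Delta^\times$ and $h^\times$ apply mechanically, so I expect no conceptual difficulty beyond keeping the wire types straight.
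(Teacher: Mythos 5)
Your proposal is correct and takes essentially the same route as the paper: every law is reduced to the uniqueness clause of the $\alpha^\times_{f,g}$ formation rule (the coinduction principle of Remark~\ref{rem:iter-coinduction}) by checking that both sides satisfy the same pair of defining equations after sliding projections through vertical composites via interchange. The only cosmetic difference is that you name the common witness cells ($\Xi$, $\Psi$) explicitly, where the paper leaves them implicit in its ``by coinduction'' steps.
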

\begin{proof}
  \begin{enumerate}[(i)]
  \item We must show that $(U^\times,\Delta^\times,\pi_0)$ is coassociative and counital. For coassociativity, we have:
    \begin{mathpar}
      \includegraphics[height=1cm,align=c]{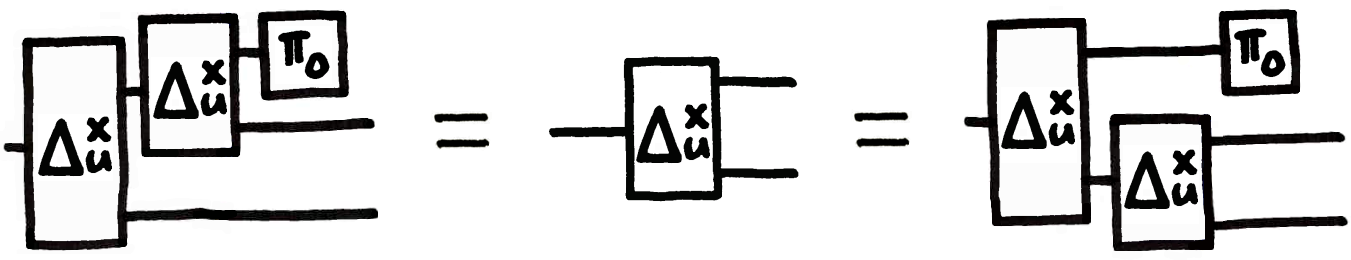}

      \includegraphics[height=1cm,align=c]{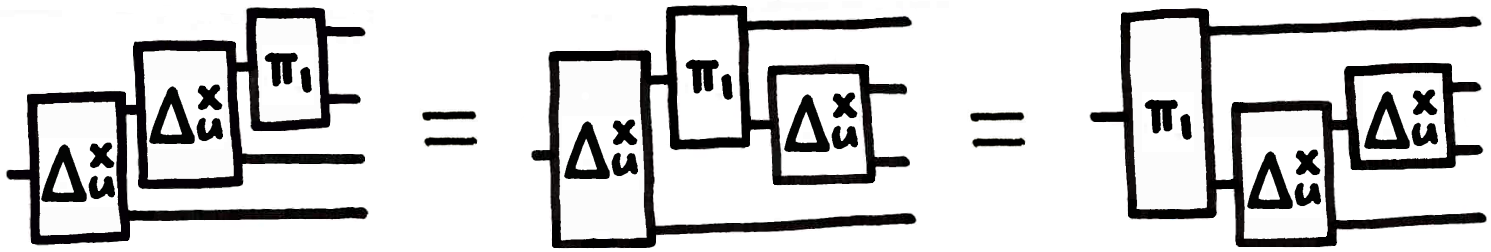}

      \includegraphics[height=1cm,align=c]{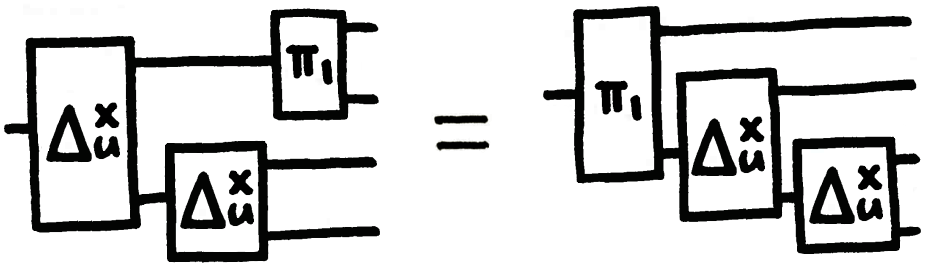}
    \end{mathpar}
    and then by coinduction we have:
    \begin{mathpar}
      \includegraphics[height=1cm,align=c]{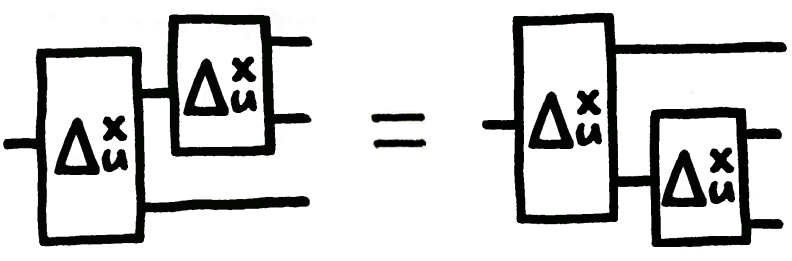}
    \end{mathpar}
    as required. The first counitality axiom holds immediately:
    \begin{mathpar}
      \includegraphics[height=1cm,align=c]{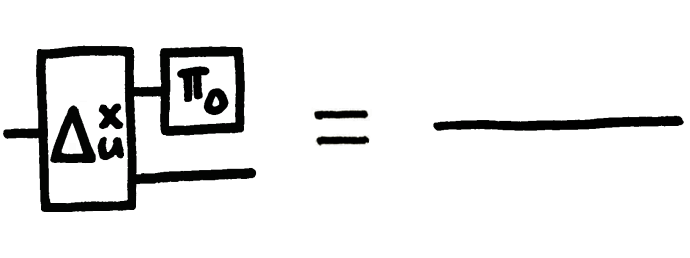}
    \end{mathpar}
    For the second counitality axiom, we have:
    \begin{mathpar}
      \includegraphics[height=1cm,align=c]{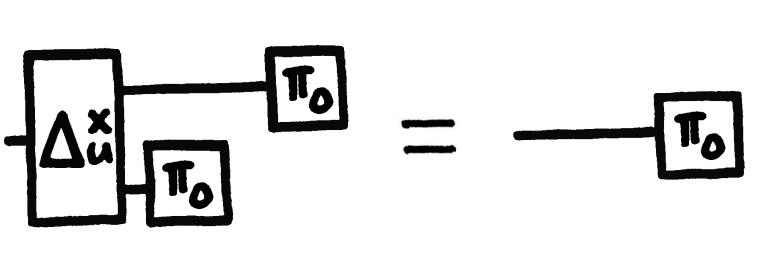}

      \includegraphics[height=1cm,align=c]{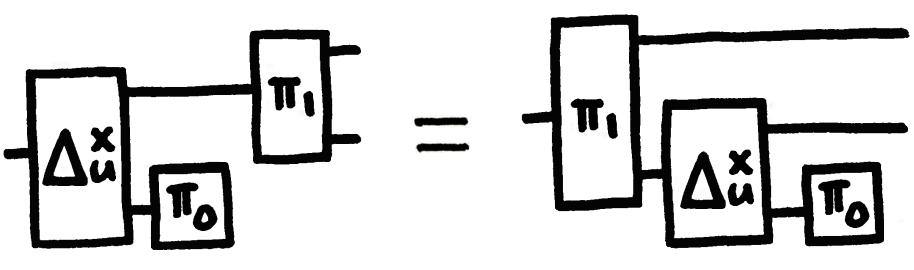}
    \end{mathpar}
    and so, since $id_{U^\times}\mid \pi_1 = \pi_1 \mid \frac{id_{U^\times}}{id_{U^\times}}$, we have by coinduction that the second unitality axiom holds, as in:
    \begin{mathpar}
      \includegraphics[height=1cm,align=c]{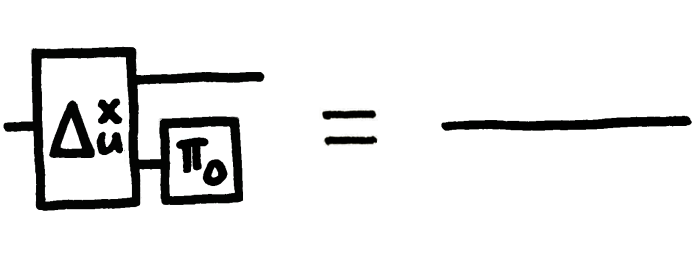}
    \end{mathpar}
    It follows that $(U^\times,\Delta^\times_U,\pi_0)$ is a comonoid. The proof that $(U^+,\nabla^+_U,\ip_0)$ is a monoid is similar. 
  \item Suppose $h : \cells{U}{I}{I}{W}$. Then we have:
    \begin{mathpar}
      h^\times \mid \Delta^\times_W \mid \frac{\pi_0}{id_{U^\times}}
        = h^\times
        = \Delta^\times_U\mid \frac{\pi_0}{h^\times}
        = \Delta^\times_U\mid\frac{h^\times}{h^\times}\mid\frac{\pi_0}{id_{U^\times}}

        h^\times \mid \Delta^\times_W \mid \frac{\pi_1}{id_{W^\times}}
        = h^\times \mid \pi_1\mid \frac{id_W}{\Delta^\times_W}
        = \pi_1 \mid \frac{h}{h^\times}\mid\frac{id_W}{\Delta^\times_W}
        = \pi_1 \mid\frac{h}{h^\times\mid\Delta^\times_W}

        \Delta^\times_U \mid \frac{h^\times}{h^\times} \mid\frac{\pi_1}{id_{U^\times}}
        = \Delta^\times_U\mid \frac{\pi_1\mid\frac{h}{h^\times}}{h^\times}
        = \pi_1 \mid \frac{h}{\Delta^\times_U \mid \frac{h^\times}{h^\times}}
    \end{mathpar}
    and so by coinduction $\Delta^\times_U \mid \frac{h^\times}{h^\times} = h^\times \mid \Delta^\times_W$, as promised. The proof that $\nabla^+_U \mid h^+ = \frac{h^+}{h^+} \mid \nabla^+_W$ is similar. 
  \end{enumerate}
\end{proof}

We end our discussion of the elementary properties of $\corner{\A}^*$ by showing that the functors $(-)^\times$ and $(-)^+$ of Remark~\ref{rem:simple-passive-iteration} are (co)monads:
\begin{lemma}\label{lem:monoid-comonoid}
  Consider the category $\bh\,\corner{\A}^*$. We have:
  \begin{enumerate}[(i)]
  \item The functor
    \[
    (-)^\times : \bh\,\corner{\A}^* \to \bh\,\corner{\A}^*
    \]
    is a comonad with counit $\varepsilon^\times : (-)^\times \to 1_{\bh\,\corner{\A}^*}$ given by components $\varepsilon^\times_U = \pi_1 \mid \frac{id_U}{\pi_0} : \cells{U^\times}{I}{I}{U}$ and comultiplication $\delta^\times : (-)^{\times} \to (-)^{\times\times}$ given by components $\delta^\times_U = (id_{U^\times})^\times_{\pi_0,\Delta^\times_U} : \cells{U^\times}{I}{I}{U^{\times\times}}$.
  \item The functor
    \[
    (-)^+ : \bh\,\corner{\A}^* \to \bh\,\corner{\A}^*
    \]
    is a monad with unit $\eta^+ : 1_{\bh\,\corner{\A}^*} \to (-)^+$ given by components $\eta^+_U  = \frac{id_U}{\ip_0}\mid\ip_1 : \cells{U}{I}{I}{U^+}$ and comultiplication $\mu^+ : (-)^{++} \to (-)^+$ given by components $\mu^+_U = (id_{U^+})^+_{\ip_0,\nabla^+_U} : \cells{U^{++}}{I}{I}{U^+}$.
  \end{enumerate}
\end{lemma}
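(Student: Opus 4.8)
The plan is to treat $(-)^\times$ and $(-)^+$ entirely dually, proving the comonad statement (i) in detail and obtaining the monad statement (ii) by the evident left/right and product/coproduct symmetry of $\corner{\A}^*$. Throughout, the workhorse is the coinductive reasoning principle of Remark~\ref{rem:iter-coinduction}: to identify a cell $\gamma$ with right boundary a $(-)^\times$-type with some $\alpha^\times_{f,g}$ it suffices to check that $\gamma \mid \frac{\pi_0}{id} = f$ and $\gamma \mid \frac{\pi_1}{id} = g \mid \frac{\alpha}{\gamma}$. I would use repeatedly the defining equations of $\delta^\times_U = (id_{U^\times})^\times_{\pi_0,\Delta^\times_U}$, namely $\delta^\times_U \mid \pi_0 = \pi_0$ and $\delta^\times_U \mid \pi_1 = \Delta^\times_U \mid \frac{id_{U^\times}}{\delta^\times_U}$, the simple-form equations $h^\times \mid \pi_0 = \pi_0$ and $h^\times \mid \pi_1 = \pi_1 \mid \frac{h}{h^\times}$ from Remark~\ref{rem:simple-passive-iteration}, and the comonoid $(U^\times,\Delta^\times_U,\pi_0)$ together with its naturality, both established in Lemma~\ref{lem:horizontal-comonoid}.

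First I would check that $\varepsilon^\times$ and $\delta^\times$ are natural transformations, since a comonad is in particular such a pair satisfying the comonad laws. Naturality of $\varepsilon^\times$, that is $h^\times \mid \varepsilon^\times_W = \varepsilon^\times_U \mid h$ for $h : \cells{U}{I}{I}{W}$, is a direct computation: expanding $\varepsilon^\times = \pi_1 \mid \frac{id}{\pi_0}$ and using the equations for $h^\times$ together with the interchange law, both sides reduce to $\pi_1 \mid \frac{h}{\pi_0}$. Naturality of $\delta^\times$, that is $h^\times \mid \delta^\times_W = \delta^\times_U \mid (h^\times)^\times$, I would prove by coinduction: composing each side with $\frac{\pi_0}{id}$ yields $\pi_0$, while composing with $\frac{\pi_1}{id}$ shows that both sides $X$ satisfy $X \mid \pi_1 = \Delta^\times_U \mid \frac{h^\times}{X}$, so both coincide with $(h^\times)^\times_{\pi_0,\Delta^\times_U}$. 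The crucial input here is the naturality of the comultiplication, $h^\times \mid \Delta^\times_W = \Delta^\times_U \mid \frac{h^\times}{h^\times}$, which is precisely Lemma~\ref{lem:horizontal-comonoid}(ii).

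Next I would verify the comonad laws. The right counit law $\delta^\times_U \mid \varepsilon^\times_{U^\times} = id_{U^\times}$ falls out directly: using the defining equations of $\delta^\times_U$ and interchange one reaches $\Delta^\times_U \mid \frac{id_{U^\times}}{\pi_0}$, which is $id_{U^\times}$ by the comonoid counit law. The left counit law $\delta^\times_U \mid (\varepsilon^\times_U)^\times = id_{U^\times}$ I would establish by coinduction against $id_{U^\times} = (id_U)^\times$, after first proving the auxiliary identity $\Delta^\times_U \mid \frac{\varepsilon^\times_U}{id_{U^\times}} = \pi_1$ (again from the comonoid counit law and the defining equations of $\Delta^\times_U$). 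Coassociativity $\delta^\times_U \mid \delta^\times_{U^\times} = \delta^\times_U \mid (\delta^\times_U)^\times$ I would likewise prove by coinduction, showing both sides satisfy $X \mid \pi_1 = \Delta^\times_U \mid \frac{\delta^\times_U}{X}$; this rests on the helper identity $\delta^\times_U \mid \Delta^\times_{U^\times} = \Delta^\times_U \mid \frac{\delta^\times_U}{\delta^\times_U}$ (asserting that $\delta^\times_U$ is a comonoid morphism), itself proved by coinduction from the coassociativity of $\Delta^\times_U$ in Lemma~\ref{lem:horizontal-comonoid}(i).

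I expect the coassociativity law to be the main obstacle, not for any conceptual reason but because verifying that the two sides meet a common coinductive specification requires the helper identity above and a fair amount of interchange-law bookkeeping to shuffle the nested vertical composites into the shape where comonoid coassociativity of $\Delta^\times_U$ applies. Once the comonad (i) is in hand, the monad (ii) follows by dualizing every step: $\pi_i$ become $\ip_i$, products become coproducts, the comonoid $(\Delta^\times_U,\pi_0)$ becomes the monoid $(\nabla^+_U,\ip_0)$ of Lemma~\ref{lem:horizontal-comonoid}, and the coinduction principle for $\alpha^\times_{f,g}$ is replaced by that for $\alpha^+_{f,g}$, yielding the unit $\eta^+$ and multiplication $\mu^+$ and their laws.
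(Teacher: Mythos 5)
Your proposal is correct and follows essentially the same route as the paper: naturality of $\varepsilon^\times$ by direct computation and of $\delta^\times$ by coinduction via Lemma~\ref{lem:horizontal-comonoid}(ii), the comonad laws by coinduction resting on the helper identity $\delta^\times_U \mid \Delta^\times_{U^\times} = \Delta^\times_U \mid \frac{\delta^\times_U}{\delta^\times_U}$ (which, as you correctly note, needs its own coinductive proof using coassociativity of $\Delta^\times_U$ rather than being an instance of the naturality statement), and part (ii) by duality. The only divergences are cosmetic --- e.g., you dispatch the counit law $\delta^\times_U \mid \varepsilon^\times_{U^\times} = id_{U^\times}$ by a direct computation where the paper wraps it in a coinduction --- and both work.
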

\begin{proof}
  \begin{enumerate}[(i)]
  \item It is straightforward to verify that $(-)^\times$ is a functor. In order to prove that it is a comonad we first show that $\varepsilon^\times$ and $\delta^\times$ are natural. Explicitly, we require:
  \begin{mathpar}
    \begin{tikzcd}
      U^\times \ar[d,"h^\times"'] \ar[r,"\varepsilon^\times_U"] & U \ar[d,"h"] \\
      W^\times \ar[r,"\varepsilon^\times_W"'] & W
    \end{tikzcd}

    \begin{tikzcd}
      U^\times \ar[r,"\delta^\times_U"] \ar[d,"h^\times"'] & U^{\times\times} \ar[d,"h^{\times\times}"] \\
      W^\times \ar[r,"\delta^\times_W"'] & W^{\times\times}
    \end{tikzcd}
  \end{mathpar}
  for any $h : U \to W$ of $\bh\,\corner{\A}$. For $\varepsilon^\times$ we have:
  \begin{mathpar}
    h^\times \mid \varepsilon^\times_W
    = h^\times \mid \pi_1 \mid \frac{id_W}{\pi_0}
    = \pi_1 \mid \frac{h}{h^\times}\mid \frac{id_U}{\pi_0}
    = \pi_1 \mid \frac{h}{\pi_0}
    = \varepsilon^\times_U \mid h
  \end{mathpar}
  as required. For $\delta^\times$ we have:
  \begin{mathpar}
    h^\times \mid \delta^\times_W \mid \pi_0 = h^\times \mid \pi_0 = \pi_0 = \delta^\times_U \mid \pi_0 = \delta^\times_U \mid h^{\times\times}\mid \pi_0

    h^\times \mid \delta^\times_W \mid \pi_1
    = h^\times \mid \Delta^\times_W \mid \frac{id_{W^\times}}{\delta^\times_W}
    = \Delta^\times_U \mid \frac{h^\times}{h^\times} \mid \frac{id_{W^\times}}{\delta^\times_W}
    = \Delta^\times_U \mid \frac{h^\times}{h^\times \mid \delta^\times_W}

    \delta^\times_U \mid h^{\times\times} \mid \pi_1
    = \delta^\times_U \mid \pi_1 \mid \frac{h^\times}{h^{\times\times}}
    = \Delta^\times_U\mid\frac{id_{U^\times}}{\delta^\times_U}\mid\frac{h^\times}{h^{\times\times}}
    = \Delta^\times_U\mid\frac{h^\times}{\delta^\times_U\mid h^{\times\times}}
  \end{mathpar}
  and then by coinduction we have $h^\times \mid \delta^\times_W = \delta^\times_U \mid h^{\times\times}$ as required.

  It remains to show that the comonad axioms are satisfied. That is, we require:
  \begin{mathpar}
    \begin{tikzcd}
      U^\times \ar[d,"\delta^\times_U"'] \ar[r,"\delta^\times_U"] & U^{\times\times} \ar[d,"\delta^\times_{U^\times}"] \\
      U^{\times\times} \ar[r,"(\delta^\times_U)^\times"'] & U^{\times\times\times}
    \end{tikzcd}

    \begin{tikzcd}
      U^\times & U^{\times\times} \ar[l,"\varepsilon^\times_{U^\times}"' ]\ar[r,"(\varepsilon^\times_U)^\times"] & U^\times \\
      \text{} & U^\times \ar[u,"\delta^\times_U"] \ar[ur,"id_{U^\times}"'] \ar[ul, "id_{U^\times}"] 
    \end{tikzcd}
  \end{mathpar}
  Notice that by coinduction $\delta^\times_U \mid \Delta^\times_{U^\times} = \Delta^\times_U \mid \frac{\delta^\times_U}{\delta^\times_U}$ although, somewhat misleadingly, not as a consequence of Lemma~\ref{lem:horizontal-comonoid}. For coassociativity of $\delta^\times$, we have:
  \begin{mathpar}
    \delta^\times_U \mid \delta^\times_{U^\times} \mid \pi_0 = \delta^\times_U \mid \pi_0 = \delta^\times_U \mid (\delta^\times_U)^\times \mid \pi_0

    \delta^\times_U \mid \delta^\times_{U^\times} \mid \pi_1
    = \delta^\times_U \mid \Delta^\times_{U^\times} \mid \frac{id_{U^{\times\times}}}{\delta^\times_{U^\times}}
      = \Delta^\times_U \mid \frac{\delta^\times_U}{\delta^\times_U}\mid\frac{id_{U^{\times\times}}}{\delta^\times_{U^\times}}
      = \Delta^\times_U \mid \frac{\delta^\times_U}{\delta^\times_U \mid \delta^\times_{U^\times}}

      \delta^\times_U \!\mid\! (\delta^\times_U)^\times \mid \pi_1
      = \delta^\times_U \!\mid\! \pi_1 \!\mid\! \frac{\delta^\times_U}{(\delta^\times_U)^\times}
      = \Delta^\times_U\mid\frac{id_{U^\times}}{\delta^\times_U}\mid\frac{\delta^\times_U}{(\delta^\times_U)^\times}
      = \Delta^\times_U \! \mid \frac{\delta^\times_U}{\delta^\times_U \!\mid\! (\delta^\times_U)^\times}
  \end{mathpar}
  and so by coinduction we have $\delta^\times_U \mid \delta^\times_{U^\times} = \delta^\times_U \mid (\delta^\times_U)^\times$ as required. For the first counit law, we have:
  \begin{mathpar}
    \delta^\times_U \mid \varepsilon^\times_{U^\times} \mid \pi_0
    = \delta^\times_U \mid \pi_1 \mid \frac{\pi_0}{\pi_0}
    = \Delta^\times_U \mid \frac{\pi_0}{\delta^\times_U \mid \pi_0}
    = \pi_0
    = id_{U^\times}\mid \pi_0

    \delta^\times_U \mid \varepsilon^\times_{U^\times} \mid \pi_1
    = \delta^\times_U \mid \pi_1 \mid \frac{\pi_1}{\pi_0}
    = \Delta^\times_U \mid \frac{\pi_1}{\delta^\times_U \mid \pi_0}
    = \pi_1 \mid \frac{id_{U}}{\Delta^\times_U\mid\frac{id_{u^\times}}{\pi_0}}
    = \pi_1
    = id_{U^\times} \mid \pi_1
  \end{mathpar}
  and then since $id_{U^\times} \mid \pi_1 = \pi_1\mid\frac{id_U}{id_{U^\times}}$ we have $\delta^\times\mid \varepsilon^\times_{U^\times} = id_{U^\times}$ by coindution. For the second counit law, we have:
  \begin{mathpar}
    \delta^\times_U \mid (\varepsilon^\times_U)^\times \mid \pi_0
    = \delta^\times_U \mid \pi_0
    = \pi_0
    = id_{U^\times}\mid\pi_0

    \delta^\times_U \mid (\varepsilon^\times_U)^\times \mid \pi_1
    = \delta^\times_U \mid \pi_1 \mid \frac{\varepsilon^\times_U}{(\varepsilon^\times_U)^\times}
    = \Delta^\times_U\mid\frac{id_{U^\times}}{\delta^\times_U}\mid\frac{\varepsilon^\times_U}{(\varepsilon^\times_U)^\times} \\
    = \Delta^\times_U \mid \frac{\pi_1\mid\frac{id_U}{\pi_0}}{\delta^\times_U \mid (\varepsilon^\times_U)^\times}
    = \pi_1 \mid \frac{id_U}{\Delta^\times_U \mid \frac{\pi_0}{\delta^\times_U \mid (\varepsilon^\times_U)^\times}}
    = \pi_1 \mid \frac{id_U}{\delta^\times_U \mid (\varepsilon^\times_U)^\times}
  \end{mathpar}
  and then since $id_{U^\times}\mid \pi_1 = \pi_1 \mid \frac{id_U}{id_{U^\times}}$ we have that $\delta^\times_U(\varepsilon^\times_U)^\times = id_{U^\times}$ by coinduction. Thus, $((-)^\times,\delta^\times,\varepsilon^\times)$ is a comonad on $\bh\,\corner{\A}^*$.
\item Similar to the proof of (i).
\end{enumerate}
\end{proof}

\subsection{Crossing Cells}\label{subsec:iteration-crossing}

We extend Definition~\ref{def:choice-crossing} to obtain crossing cells in the free cornering with iteration:
\begin{definition}
Let $\A$ be a distributive monoidal category. For each $A \in \corner{\A}^*_H$ and each $U \in \corner{\A}^*_V$ We define crossing a crossing cell $\chi_{U,A} : \cells{U}{A}{A}{U}$ by induction on the structure of $U$. The cases for $A^\circ, A^\bullet, I, U \otimes W, U \times W$, and $U + W$ are as in Definition~\ref{def:choice-crossing}. For $U^\times$ we define $\chi_{U^\times,A} = (\chi_{U,A})^\times$ and for $U^+$ we define $\chi_{U^+,A} = (\chi_{U,A})^+$. That is, $\chi_{U^\times,A}$ is the unique cell such that:
\begin{mathpar}
  \includegraphics[height=1.2cm,align=c]{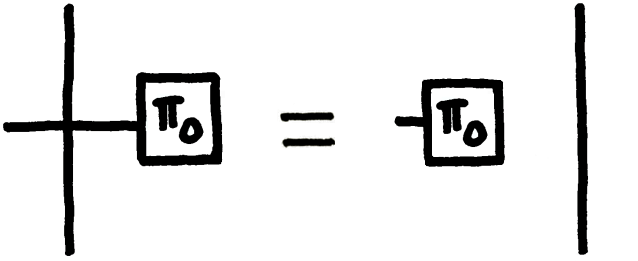}

  \includegraphics[height=1.2cm,align=c]{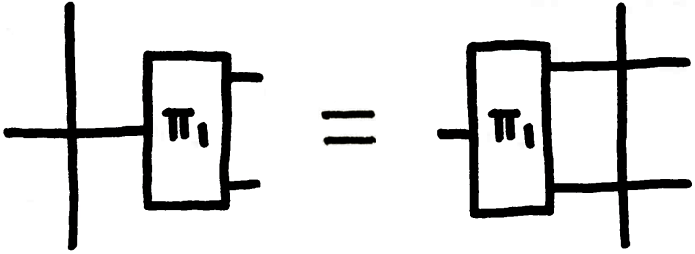}
\end{mathpar}
Similarly, $\chi_{U^+,A}$ is the unique cell such that:
\begin{mathpar}
  \includegraphics[height=1.2cm,align=c]{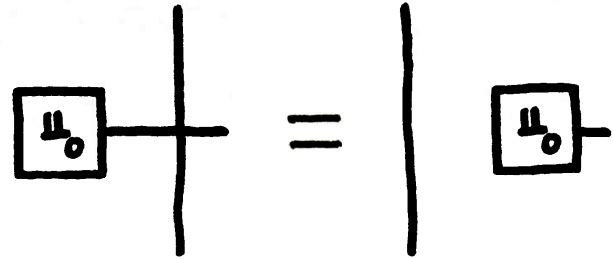}

  \includegraphics[height=1.2cm,align=c]{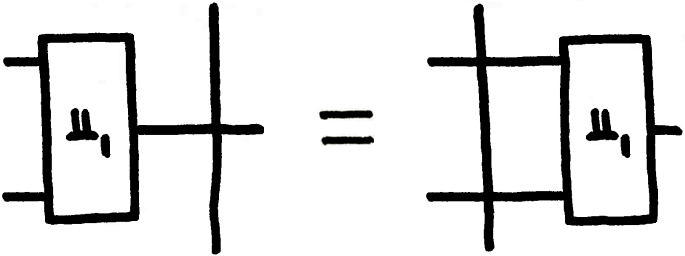}
\end{mathpar}
\end{definition}

The crossing cells remain coherent with respect to horizontal composition:
\begin{lemma}\label{lem:iteration-crossing-horiz-coherent}
  For $U \in \ex{A}_*$ and $A,B \in \A_0$ we have
  \begin{enumerate}[(i)]
  \item $\chi_{U,{A \otimes B}} = \chi_{U,A} \mid \chi_{U,B}$
  \item $\chi_{U,I} = 1_U$
  \end{enumerate}
\end{lemma}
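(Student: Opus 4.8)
The plan is to proceed by structural induction on $U$, mirroring the proof of Lemma~\ref{lem:coherent-crossing-choice} and supplying only the two genuinely new inductive cases, namely $U^\times$ and $U^+$. The base cases and the cases for $U \otimes W$, $U \times W$, and $U + W$ are handled exactly as before, so all the real work lies in checking that horizontal coherence is preserved by the iteration operations $(-)^\times$ and $(-)^+$. The essential tool throughout is the coinductive reasoning principle of Remark~\ref{rem:iter-coinduction}: since $\chi_{U^\times,A} = (\chi_{U,A})^\times$ is characterised uniquely by the equations $\chi_{U^\times,A}\mid\pi_0 = \pi_0 \mid 1_A$ and $\chi_{U^\times,A}\mid\pi_1 = \pi_1 \mid \frac{\chi_{U,A}}{\chi_{U^\times,A}}$, to prove an equality with such a cell it suffices to verify that the candidate satisfies these two defining equations.

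For part (i) in the case $U^\times$, I would set $\gamma = \chi_{U^\times,A}\mid\chi_{U^\times,B}$ and check that $\gamma$ satisfies the defining equations of $\chi_{U^\times,A\otimes B} = (\chi_{U,A\otimes B})^\times$. Composing with $\pi_0$ and using associativity of horizontal composition together with the component equations $\chi_{U^\times,B}\mid\pi_0 = \pi_0\mid 1_B$ and $\chi_{U^\times,A}\mid\pi_0 = \pi_0\mid 1_A$ gives $\gamma\mid\pi_0 = \pi_0\mid 1_{A\otimes B}$. Composing with $\pi_1$ and using the analogous $\pi_1$-equations produces $\pi_1 \mid \left(\frac{\chi_{U,A}}{\chi_{U^\times,A}} \mid \frac{\chi_{U,B}}{\chi_{U^\times,B}}\right)$; the interchange law rewrites the bracketed term as $\frac{\chi_{U,A}\mid\chi_{U,B}}{\chi_{U^\times,A}\mid\chi_{U^\times,B}}$, after which the induction hypothesis (i) for $U$ collapses the numerator to $\chi_{U,A\otimes B}$ while the denominator is just $\gamma$, yielding $\gamma\mid\pi_1 = \pi_1\mid\frac{\chi_{U,A\otimes B}}{\gamma}$. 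By coinduction $\gamma = \chi_{U^\times,A\otimes B}$, as required. The case $U^+$ is dual, replacing projections by injections and swapping handedness.

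For part (ii) in the case $U^\times$, I would compute $\chi_{U^\times,I} = (\chi_{U,I})^\times = (id_U)^\times$ using the induction hypothesis (ii) for $U$, and then argue $(id_U)^\times = id_{U^\times}$ by coinduction. The horizontal identity $id_{U^\times}$ satisfies $id_{U^\times}\mid\pi_0 = \pi_0$ and $id_{U^\times}\mid\pi_1 = \pi_1$ since it is a unit for horizontal composition, and these match the defining equations $(id_U)^\times\mid\pi_0 = \pi_0\mid 1_I = \pi_0$ and $(id_U)^\times\mid\pi_1 = \pi_1\mid\frac{id_U}{id_{U^\times}}$ once one observes that horizontal identities compose vertically, i.e.\ $\frac{id_U}{id_{U^\times}} = id_{U\otimes U^\times}$, which is itself a horizontal unit. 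The $U^+$ case is again dual.

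I expect the main difficulty to be bookkeeping rather than conceptual: unlike the finite cases for $\times$ and $+$, the iterated crossing cells carry only a universal (coinductive) characterisation, so each equation must be obtained by verifying the defining equations and invoking coinduction rather than by direct computation, and one must re-associate the nested horizontal and vertical composites carefully and apply interchange at precisely the right moment. The only genuinely new observations required are that $(-)^\times$ and $(-)^+$ send horizontal identities to horizontal identities, and that interchange is what lets the induction hypothesis for $U$ feed into the argument for $U^\times$ and $U^+$.
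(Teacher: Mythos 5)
Your proposal is correct and follows essentially the same route as the paper: structural induction supplying only the $U^\times$ and $U^+$ cases, composing with $\pi_0$ and $\pi_1$, using interchange and the induction hypothesis to match the defining equations of the iterated crossing cell, and concluding by the coinductive uniqueness principle of Remark~\ref{rem:iter-coinduction}. The only cosmetic difference is the direction in which the induction hypothesis is applied (you rewrite $\chi_{U,A}\mid\chi_{U,B}$ to $\chi_{U,A\otimes B}$ inside the specification, whereas the paper shows both cells satisfy the specification built from $\chi_{U,A}\mid\chi_{U,B}$), which is immaterial.
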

\begin{proof}
  We extend the proof of Lemma~\ref{lem:coherent-crossing-choice} with the necessary inductive cases:
  \begin{enumerate}[(i)]
  \item For $U^\times$ we have:
    \begin{mathpar}
      \chi_{U^\times,A \otimes B} \mid \pi_0 = \pi_0 \mid 1_{A \otimes B} = \pi_0 \mid 1_A \mid 1_B = \chi_{U^\times,A} \mid \chi_{U^\times,B} \mid \pi_0

      \chi_{U^\times,A \otimes B} \mid \pi_1 = \pi_1 \mid \frac{\chi_{U,A \otimes B}}{\chi_{U^\times,A \otimes B}} = \pi_1 \mid \frac{\chi_{U,A} \mid \chi_{U,B}}{\chi_{U^\times,A \otimes B}}

      \chi_{U^\times,A} \mid \chi_{U^\times,B} \mid \pi_1  = \pi_1 \mid \frac{\chi_{U,A}}{\chi_{U^\times,A}} \mid \frac{\chi_{U,B}}{\chi_{U^\times,B}} = \pi_1 \mid \frac{\chi_{U,A} \mid \chi_{U,B}}{\chi_{U^\times,A} \mid \chi_{U^\times,B}}
    \end{mathpar}
    and so $\chi_{U^\times,A \otimes B} = \chi_{U^\times,A} \mid \chi_{U^\times,B}$ by coinduction. A similar argument gives $\chi_{U^+,A \otimes B} = \chi_{U^+,A} \mid \chi_{U^+,B}$. 
  \item For $U^\times$ we have:
    \begin{mathpar}
      \chi_{U^\times,I} \mid \pi_0 = \pi_0 \mid 1_I = \pi_0 = id_{U^\times} \mid \pi_0

      \chi_{U^\times,I} \mid \pi_1 = \pi_1 \mid \frac{\chi_{U,I}}{\chi_{U^\times,I}} = \pi_1 \mid \frac{id_{U}}{\chi_{U^\times,I}}

      id_{U^\times} \mid \pi_1 = \pi_1 = \pi_1 \mid id_{U \otimes U^\times} = \pi_1 \mid \frac{id_U}{id_{U^\times}}
    \end{mathpar}
    It follows that $\chi_{U^\times,I} = id_{U^\times}$. The case for $U^+$ is similar.
  \end{enumerate}
\end{proof}

Next, we show that the technical lemma concerning crossing cells holds in the free cornering with iteration:
\begin{lemma}\label{lem:crossing-swaps-iteration}
  For any cell $\alpha$ of $\corner{\A}^*$ we have
  \[
  \includegraphics[height=1.2cm]{figs/crossing-swaps.png}
  \]
\end{lemma}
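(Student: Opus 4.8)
The plan is to proceed exactly as in the proofs of Lemma~\ref{lem:crossing-swaps} and Lemma~\ref{lem:choice-crossing-swaps}, namely by structural induction on the cells of $\corner{\A}^*$. Every generating cell and every formation rule of $\corner{\A}^\oplus$ is also present in $\corner{\A}^*$, and the definition of the crossing cells $\chi_{U,A}$ on the constructors $A^\circ$, $A^\bullet$, $I$, $U \otimes W$, $U + W$, and $U \times W$ is unchanged. Hence the base cases (the corner cells and the $\corner{f}$ maps) together with the inductive cases for $\alpha \mid \beta$, $\frac{\alpha}{\beta}$, $\alpha + \beta$, $\alpha \times \beta$, and $[\alpha,\beta]$ carry over verbatim. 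It therefore remains only to treat the two genuinely new inductive cases, the iteration cells $\alpha^\times_{f,g}$ and $\alpha^+_{f,g}$.

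For the case $\alpha^\times_{f,g}$ I would argue by the coinductive reasoning principle of Remark~\ref{rem:iter-coinduction}. Writing $\gamma$ for the side of the desired equation obtained by crossing a wire $V$ past $\alpha^\times_{f,g}$, the strategy is to show that $\gamma$ satisfies the two defining equations characterising the iteration cell built from the crossed data $\alpha$, $f$, and $g$ — that is, the equations obtained by postcomposing horizontally with $\frac{\pi_0}{id}$ and with $\frac{\pi_1}{id}$. To verify these I would push the crossing cell past $\pi_0$ and $\pi_1$ using the defining equations of $\chi_{U^\times,A} = (\chi_{U,A})^\times$ from the preceding definition, push it through the vertical composite $\frac{\alpha}{\gamma}$ that appears on the continue branch, and push it through the parameter cells $f$ and $g$ using the induction hypothesis applied to each of $\alpha$, $f$, and $g$. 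The coherence of the crossing cells with horizontal composition (Lemma~\ref{lem:iteration-crossing-horiz-coherent}) is also needed to reorganise the boundary wires. Once both equations are established, coinduction forces $\gamma$ to coincide with the corresponding crossed iteration cell, which is precisely the claim. The case for $\alpha^+_{f,g}$ is dual.

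The main obstacle I anticipate is bookkeeping in the coinductive step rather than any conceptual difficulty: one must correctly identify which iteration cell $\gamma$ is being characterised after the crossing has been applied, keeping in mind that the parameters $f$ and $g$ themselves become crossed cells, and then check that the two defining equations really do reduce — under the induction hypothesis and the $\pi_0$/$\pi_1$ equations for $\chi_{U^\times,A}$ — to instances of the crossing-swaps equation for the smaller cells $\alpha$, $f$, and $g$. Because the continue branch refers back to $\alpha^\times_{f,g}$ through the composite $\frac{\alpha}{\gamma}$, a direct equational argument would be circular, which is exactly why coinduction is the correct tool here; the remaining effort is to make the two coinductive hypotheses line up, mirroring the computations already carried out in Lemmas~\ref{lem:algebra-coalgebra} and~\ref{lem:horizontal-comonoid}.
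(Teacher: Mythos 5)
Your proposal matches the paper's proof: the paper likewise extends the structural induction of Lemmas~\ref{lem:crossing-swaps} and~\ref{lem:choice-crossing-swaps}, treats only the new cases $\alpha^\times_{f,g}$ and $\alpha^+_{f,g}$, and establishes each by verifying the two defining equations of the crossed iteration cell and then invoking the coinductive reasoning principle of Remark~\ref{rem:iter-coinduction}, with the $+$ case handled dually. The details you anticipate (pushing the crossing past $\pi_0$, $\pi_1$, $f$, and $g$ via the induction hypothesis) are exactly the computations the paper carries out diagrammatically.
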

\begin{proof}
  We extend the proof of Lemma~\ref{lem:choice-crossing-swaps} with the necessary inductive cases. For cells $\alpha^\times_{f,g}$ we have:
  \begin{mathpar}
    \includegraphics[height=1.4cm,align=c]{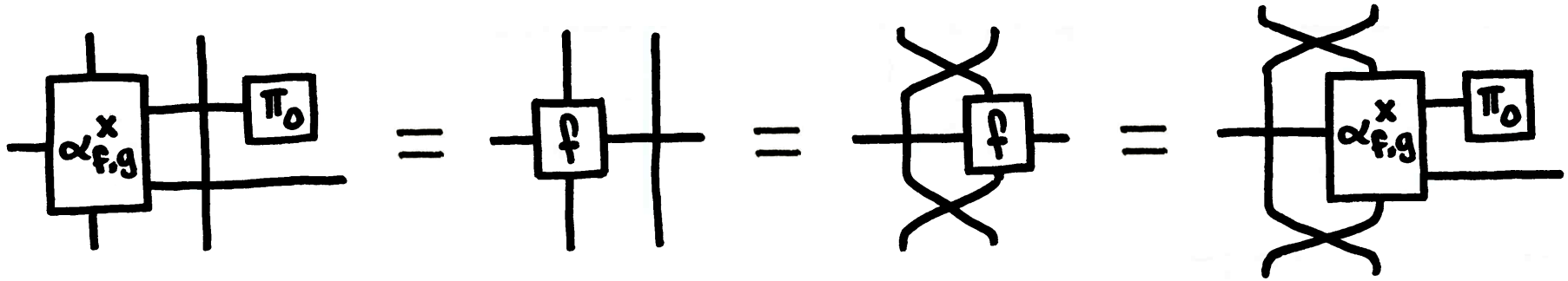}

    \includegraphics[height=1.2cm,align=c]{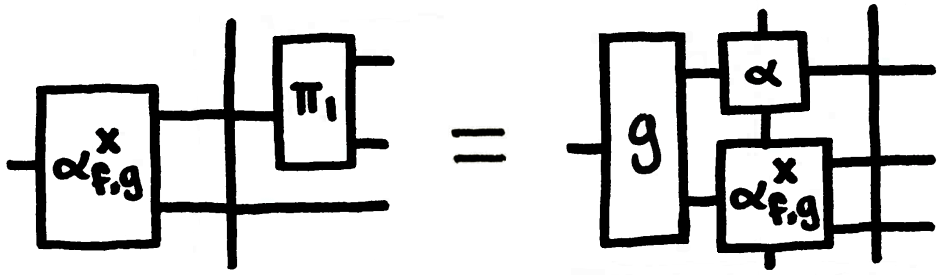}

    \includegraphics[height=1.8cm,align=c]{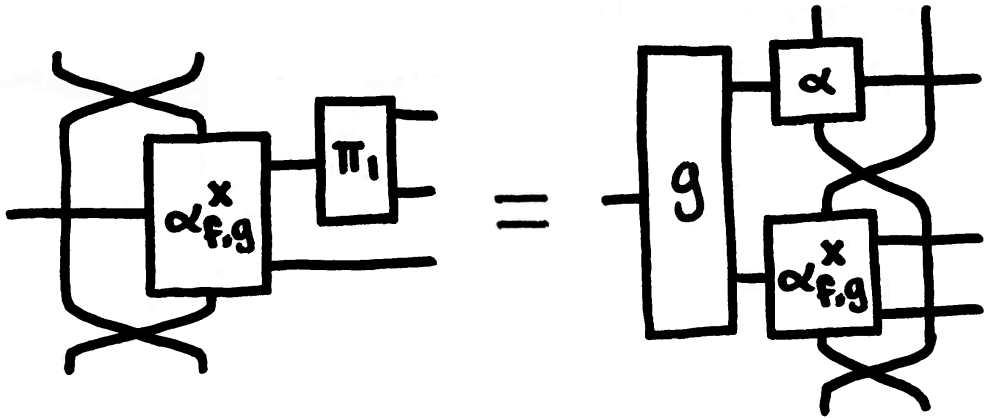}
  \end{mathpar}
  which gives, using the proof technique of Remark~\ref{rem:iter-coinduction}:
  \begin{mathpar}
    \includegraphics[height=1.4cm,align=c]{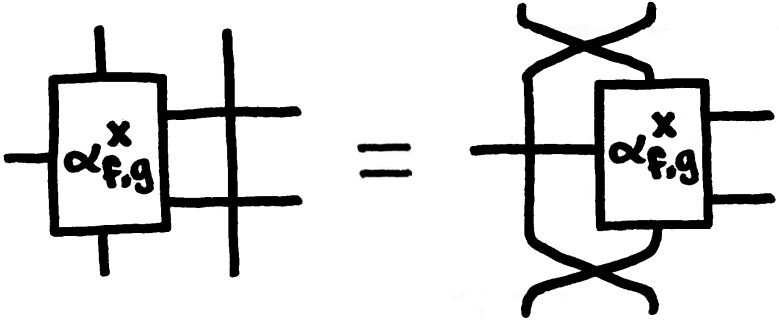}
  \end{mathpar}
  as required. The case for cells $\alpha^+_{f,g}$ is similar. 
\end{proof}

Consequently, $\corner{\A}^*$ is a monoidal double category with the tensor product of cells and proof as in Lemma~\ref{lem:monoidaldouble}. We record:
\begin{lemma}\label{lem:iterationmonoidaldouble}
  If $\A$ is a distributive monoidal category then $\corner{\A}^*$ is a monoidal double category. 
\end{lemma}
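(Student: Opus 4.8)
The plan is to mimic the proof of Lemma~\ref{lem:monoidaldouble} almost verbatim, since the entire monoidal structure is built out of crossing cells and the two compositions, both of which remain available in $\corner{\A}^*$. Concretely, the tensor product acts on horizontal edges by the monoidal product of $\A$, on vertical edges by the operation $\otimes$ of $\ex{\A}_*$, and on a pair of cells $\alpha : \corner{\A}^*\cells{U}{A}{B}{U'}$ and $\beta : \corner{\A}^*\cells{V}{C}{D}{V'}$ by the same interleaving of boundaries via crossing cells depicted in the proof of Lemma~\ref{lem:monoidaldouble}. As observed there, this product is strictly associative and unital at the level of both edges and cells, because associativity and unitality of $\otimes$ on $\A_0$ and of $\otimes$ on $\ex{\A}_*$ are strict, even though the assignment on cells is only pseudofunctorial.

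Next I would verify that $\otimes$ furnishes a pseudo-monoid object in the strict $2$-category $\bv\mathsf{DblCat}$, with the comparison supplied by the vertical transformation whose components swap the two middle wires, exactly as in Lemma~\ref{lem:monoidaldouble}. The one substantive requirement is that this family of swap cells be natural with respect to all cells of $\corner{\A}^*$; this amounts precisely to showing that crossing cells slide past an arbitrary cell, which is the content of Lemma~\ref{lem:crossing-swaps-iteration}. Since that lemma has already been established for $\corner{\A}^*$, the naturality holds for every cell, and the coherence conditions for a pseudo-monoid follow just as in the base case.

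The only place where the iteration structure could have caused trouble is in this naturality check for the newly introduced cells $\alpha^\times_{f,g}$ and $\alpha^+_{f,g}$, but that obstacle has already been absorbed into Lemma~\ref{lem:crossing-swaps-iteration}, whose inductive proof explicitly covers those two cases. Consequently nothing beyond the argument of Lemma~\ref{lem:monoidaldouble} is needed, and we conclude that $\corner{\A}^*$ is a monoidal double category. The main obstacle, then, is not in this lemma at all but in the supporting Lemma~\ref{lem:crossing-swaps-iteration}; once that is in hand, the present statement is essentially formal.
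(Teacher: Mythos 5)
Your proposal matches the paper's own treatment: the paper likewise records this lemma as an immediate consequence of Lemma~\ref{lem:crossing-swaps-iteration}, reusing the tensor product of cells and the argument from Lemma~\ref{lem:monoidaldouble} verbatim. You correctly identify that the only new content lies in the inductive cases for $\alpha^\times_{f,g}$ and $\alpha^+_{f,g}$, which are already handled in that supporting lemma, so nothing further is needed here.
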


Further, the crossing cells remain coherent with respect to $\oplus$ in $\corner{\A}^*$:
\begin{lemma}\label{crossing-copairing-coherent-iteration}
  In $\corner{\A}^*$, $\chi_{U,A \oplus B} = \left[\frac{\chi_{U,A}}{\sigma_0},\frac{\chi_{U,B}}{\sigma_1}\right]$. That is, $\chi_{U,A \oplus B}$ is the unique cell such that:
  \begin{mathpar}
    \frac{\sigma_0}{\chi_{U,A\oplus B}} = \frac{\chi_{U,A}}{\sigma_0}

    \frac{\sigma_1}{\chi_{U,A\oplus B}} = \frac{\chi_{U,B}}{\sigma_1}
  \end{mathpar}
\end{lemma}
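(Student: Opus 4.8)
The plan is to proceed by structural induction on $U$, extending the proof of Lemma~\ref{lem:coherent-oplus-choice}. Since the formation of $\ex{\A}_*$ adds only the clauses $U^\times$ and $U^+$ to $\ex{\A}_\oplus$, the base cases $C^\circ$, $C^\bullet$, $I$ together with the inductive cases for $U \otimes W$, $U + W$, and $U \times W$ go through verbatim as in Lemma~\ref{lem:coherent-oplus-choice}. It therefore remains to supply the two new inductive cases, $U^\times$ and $U^+$, for which the natural tool is coinduction (Remark~\ref{rem:iter-coinduction}), exactly as in the proofs of Lemmas~\ref{lem:iteration-crossing-horiz-coherent} and~\ref{lem:crossing-swaps-iteration}.

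For the case $U^\times$, recall that $\chi_{U^\times,A} = (\chi_{U,A})^\times$, so that $\chi_{U^\times,A} \mid \pi_0 = \pi_0 \mid 1_A$ and $\chi_{U^\times,A}\mid \pi_1 = \pi_1 \mid \frac{\chi_{U,A}}{\chi_{U^\times,A}}$, and likewise for $A \oplus B$. I would set $\gamma = \frac{\sigma_0}{\chi_{U^\times,A \oplus B}}$ and $\gamma' = \frac{\chi_{U^\times,A}}{\sigma_0}$ and show that they satisfy the same pair of defining equations relative to $\pi_0$ and $\pi_1$. Composing with $\pi_0$ on the right and repeatedly applying interchange (writing $\sigma_0 = \square_I \mid \sigma_0$ and pulling $\pi_0$ past the vertical composite) yields $\gamma \mid \pi_0 = \pi_0 \mid \sigma_0 = \gamma' \mid \pi_0$. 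Composing with $\pi_1$ instead, the defining equation for $\chi_{U^\times,-}$ exposes a nested copy of $\chi_{U,-}$, and after rearranging by interchange and associativity of vertical composition one reaches $\gamma\mid\pi_1 = \pi_1\mid\frac{\chi_{U,A}}{\gamma}$, where crucially the inductive hypothesis $\frac{\sigma_0}{\chi_{U,A\oplus B}} = \frac{\chi_{U,A}}{\sigma_0}$ is used to rewrite the top of the nested composite; the symmetric manipulation gives $\gamma'\mid\pi_1 = \pi_1\mid\frac{\chi_{U,A}}{\gamma'}$. Since $\gamma$ and $\gamma'$ now satisfy identical $\pi_0$- and $\pi_1$-equations, coinduction gives $\gamma = \gamma'$, that is $\frac{\sigma_0}{\chi_{U^\times,A\oplus B}} = \frac{\chi_{U^\times,A}}{\sigma_0}$. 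The argument for $\sigma_1$ is identical, and the case $U^+$ is dual, using the injections $\ip_0,\ip_1$ and the coinduction principle for $(-)^+$.

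Having established both equations $\frac{\sigma_0}{\chi_{U,A\oplus B}} = \frac{\chi_{U,A}}{\sigma_0}$ and $\frac{\sigma_1}{\chi_{U,A\oplus B}} = \frac{\chi_{U,B}}{\sigma_1}$ for all $U$, the identity $\chi_{U,A\oplus B} = \left[\frac{\chi_{U,A}}{\sigma_0},\frac{\chi_{U,B}}{\sigma_1}\right]$ follows immediately from the universal property of $\oplus$.

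I expect the main obstacle to be the $\pi_1$-equation in the $U^\times$ case: correctly reducing $\gamma\mid\pi_1$ to the recursive form $\pi_1\mid\frac{\chi_{U,A}}{\gamma}$ requires threading the inductive hypothesis through a nested vertical composite and ensuring that the recursive occurrence of $\gamma$ (respectively $\gamma'$) lands in the same syntactic position for both cells, which is precisely what licenses the appeal to coinduction. The bookkeeping with interchange and the units $\square_I$ and $1_A$ is routine but must be carried out carefully to keep the horizontal and vertical boundaries aligned.
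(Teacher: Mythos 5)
Your proposal is correct and follows essentially the same route as the paper: structural induction on $U$, inheriting the base cases and the $\otimes$, $+$, $\times$ cases from Lemma~\ref{lem:coherent-oplus-choice}, and handling $U^\times$ by showing that $\frac{\sigma_0}{\chi_{U^\times,A\oplus B}}$ and $\frac{\chi_{U^\times,A}}{\sigma_0}$ satisfy the same $\pi_0$- and $\pi_1$-equations (using the inductive hypothesis inside the nested vertical composite) and concluding by coinduction, with $U^+$ dual. The final appeal to the universal property of $\oplus$ matches the statement of the lemma, so nothing is missing.
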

\begin{proof}
  By structural induction on $U$. We supply the necessary inductive cases to extend the proof of Lemma~\ref{lem:coherent-oplus-choice} to a proof of the present claim. For $U^\times$ we have:
  \begin{mathpar}
    \includegraphics[height=1.6cm,align=c]{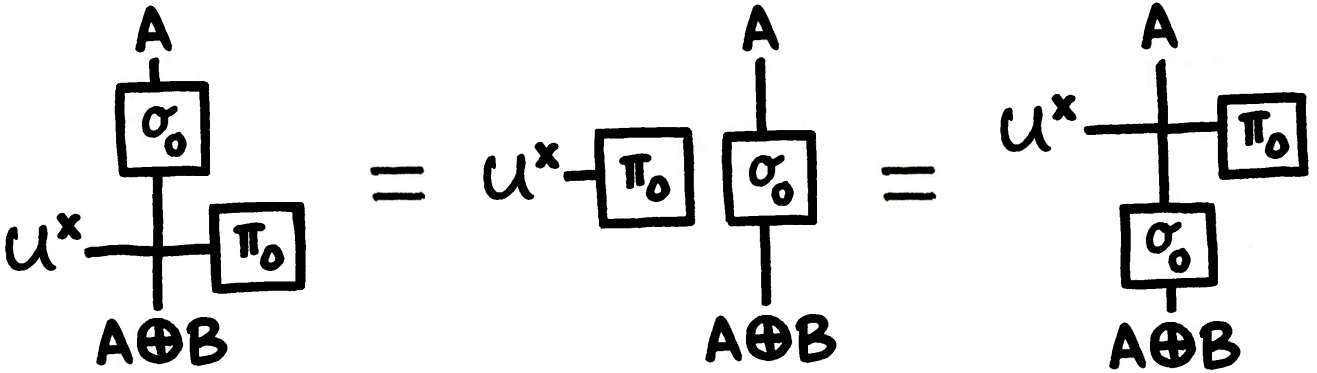}

    \includegraphics[height=1.6cm,align=c]{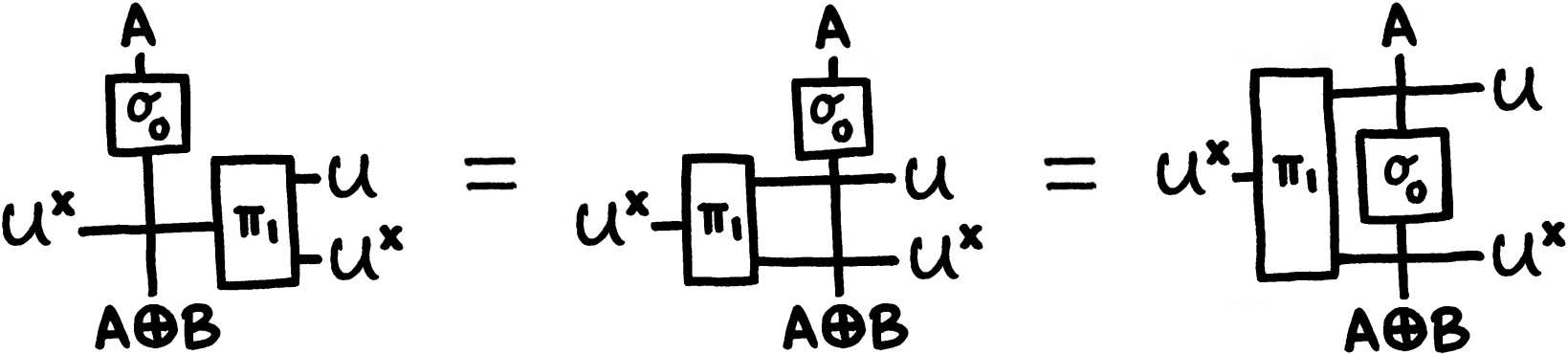}

    \includegraphics[height=1.6cm,align=c]{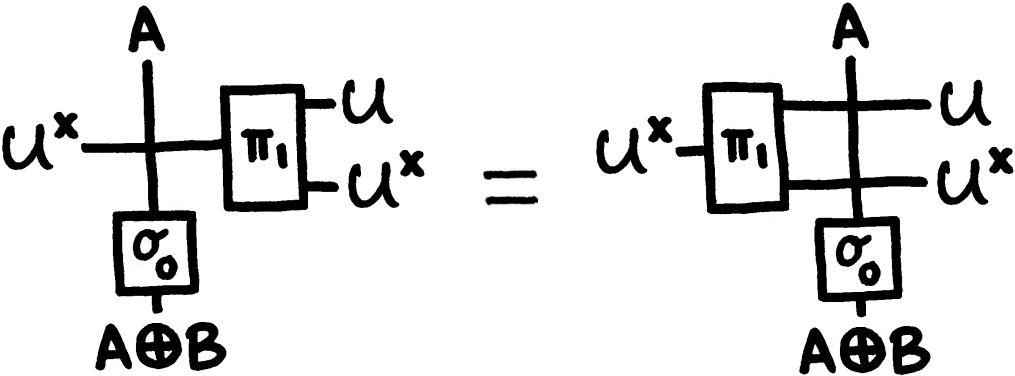}
  \end{mathpar}
  and so by coinduction we have $\frac{\sigma_0}{\chi_{U^\times,A \oplus B}} = \frac{\chi_{U^\times,A}}{\sigma_0}$. Similarly, $\frac{\sigma_1}{\chi_{U^\times,A\oplus B}} = \frac{\chi_{U^\times,B}}{\sigma_1}$. The case for $U^+$ is analogous. 
\end{proof}

\subsection{A Model: Iteration in Stateful Transformations}\label{subsec:stateful-iteration}
We return to the double category $\Dst{\C}$ of stateful transformations over a cartesian closed category with distributive binary coproducts. In order to define $F^+$ for a strong endofunctor $F$ in $\C^\C_\tau$ we require the existence of an initial algebra for the functor $(X \oplus F(-)) : \C \to \C$ for each object $X$ of $\C$. Then we may define $F^+$ to be the functor mapping $X$ to the carrier of the corresponding initial algebra $a^+_{F,X} : X \oplus F(F^+X) \to F^+ X$. Dually, in order to define $F^\times$ we require the existence of a final coalgebra for the functor $(X \otimes F(-)) : \C \to \C$ for each object $X$ of $\C$. Then we may define $F^\times$ to be the functor mapping $X$ to the carrier of the corresponding final coalgebra $c^\times_{F,X} : F^\times X \to X \otimes F(F^\times X)$.

We show that when they exist, both $F^+$ and $F^\times$ are strong. For $F^\times$, this can be shown by constructing a coalgebra for $(X \otimes Y \otimes F(-))$ with carrier $F^{\times}X \otimes Y$, thereby constructing a coalgebra morphism: $\tau^{F^{\times}}_{X,Y} : F^{\times}X \otimes Y \to F^{\times}(X \otimes Y)$ which acts as the strength. 
The coalgebra is defined as follows:
\[F^{\times}X \otimes Y \xrightarrow{c^{\times} \otimes Y} X \otimes F(F^{\times}X) \otimes Y \xrightarrow{X \otimes \langle \pr , \tau^F \rangle } (X \otimes Y \otimes F(F^{\times}X \otimes Y))
\]

Dually, $F^{+}$ is proven to be strong by defining an algebra for $(X \oplus F(-))$ with carrier $(F^+(X \otimes Y))^Y$, thereby constructing an algebra morphism $m : F^+X \to (F^+(X \otimes Y))^Y$ which induces the strength $\tau^{F^+}_{X,Y} = (m \otimes Y) \, \ev^Y : F^+X \otimes Y \to F^+(X \otimes Y)$.
The algebra is defined as
\[
[\lambda[l_{X,Y}], \lambda[r_{X,Y}]] : X \oplus F((F^+(X \otimes Y))^Y) \to (F^+(X \otimes Y))^Y
\]
where
\[
l_{X,Y} : X \otimes Y 
\xrightarrow{\sigma_0}
(X \otimes Y) \oplus F(F^+(X \otimes Y))
\xrightarrow{a^+_{F,X \otimes Y}} 
F^+(X \otimes Y)
\]
\[
r_{X,Y} : F((F^+(X \otimes Y))^Y) \otimes Y \xrightarrow{\tau^F \, F(\ev^Y)} 
F(F^+(X \otimes Y))
\xrightarrow{\sigma_1 \, a^+_{F,X \otimes Y}}
F^+(X \otimes Y)	
\]
It follows that $F^+$ and $F^\times$ are strong, as promised.

Now, given cells $\alpha : \cells{G}{A}{A}{H}$, $g : \cells{F}{1}{1}{G \circ F}$ and $f : \cells{F}{A}{B}{K}$ of $\mathsf{S}(\C)$ such that $H^\times$ exists we may take $\alpha^{\times}_{f,g} : \cells{F}{A}{B}{H^{\times} \circ K}$ to have components $(\alpha^{\times}_{f,g})_X : FX \otimes A \to H^{\times}K(X \otimes B)$ given by the unique coalgebra morphism from the coalgebra $\langle f, (g \otimes A) \alpha \rangle : FX \otimes A \to K(X \otimes B) \otimes H(FX \otimes A)$ to $c^{\times}_{H, K(X \otimes B)}$ .

The dual case is slightly more involved. Given cells $\alpha : \cells{F}{A}{A}{G}$, $g : \cells{G \circ H}{1}{1}{H}$ and $f : \cells{K}{A}{B}{H}$ of $\mathsf{S}(\C)$ such that $F^+$ exists we may construct $\alpha^+_{f,g} : \cells{F^+ \circ K}{A}{B}{H}$ as follows. First, for each object $X$ of $\C$ let $h_X$ be defined by:
\[
h_X = F((H(X \otimes B))^A) \otimes A \xrightarrow{\alpha \, G(\ev^A_{X \otimes B})} GH(X \otimes B) \xrightarrow{g_{X \otimes B}} H(X \otimes B) 
\]
Then $\lambda[f_X] : KX \to (H(X \otimes B))^A$ and $\lambda[h_X] : F((H(X \otimes B))^A) \to (H(X \otimes B))^A$ gives us an algebra: $[\lambda[f_X], \lambda[h_X]] : KX \oplus F((H(X \otimes B))^A) \to (H(X \otimes B))^A$ of $KX \oplus F(-) : \C \to \C$. Since $a^+_{F,KX}$ is the initial algebra of $KX \oplus F(-) : \C \to \C$, we have an algebra morphism $m : F^+(KX) \to (H(X \otimes B))^A$ from $a^+_{F,KX}$ to $[\lambda[f_X],\lambda[h_X]]$. Define $(\alpha^+_{f,g})_X = (m \otimes A)\ev^A_{H(X \otimes B)} : F^+(KX) \otimes A \to H(X \otimes B)$. 

In fact, $\alpha^\times_{f,g}$ and $\alpha^+_{f,g}$ define strong natural transformations, although the proof is somewhat involved. It follows that we may interpret $\corner{\C}^*$ into $\mathsf{S}(\C)$ to the extent that $F^+$ and $F^\times$ exist.

In general, $F^+$ and $F^\times$ need not exist in $\mathsf{S}(\C)$, although in at least one instance it is possible to extend the double functor of Lemma~\ref{lem:doublefunctor} and Lemma~\ref{lem:doublefunctorchoice} to the free cornering with iteration. Specifically, the \emph{containers} over the category $\mathsf{Set}$ of sets and functions\footnote{Admittedly the monoidal structure on $\mathsf{Set}$ is not typically taken to be strict, so to fit the setting of this paper one would technically have to work with its strictification.} are strong and satisfy the above requirements for the existence of $F^+$ and $F^\times$, which are also containers \cite{Abbott05,Ahman_2014}. Moreover, containers over $\mathsf{Set}$ contain the functors $A^\circ$, $A^\bullet$, and $I$, and are closed under $+$, $\times$, and composition. It follows that there is a double functor $D : \corner{\mathsf{Set}}^* \to \mathsf{S}(\mathsf{Set})$ whose image lies in the part of $\mathsf{S}(\mathsf{Set})$ obtained by restricting the vertical edge monoid to the part consisting of only the containers. We leave the project of precisely characterising those $\C$ for which this double functor exists for future work. 

\begin{remark}\label{rem:effects-interp-iteration}
  We consider the interpretation of iterated protocols given by $(-)^+$ and $(-)^{\times}$ from the perspective of computational effects, extending Remarks~\ref{rem:effects-interpretation} and \ref{rem:effects-interp-choice}. As a computational effect, $F^+$ enables a program to trigger the effect $F$ any number of times it chooses. Dually, to resolve an effect $F^{+}$ its environment must be able to resolve $F$ any number of times. Similarly, to trigger $F^{\times}$ a program must provide a method for triggering $F$ any number of times, as required by the environment. Dually, to resolve $F^{\times}$ the environment must commit to resolving $F$ a number of times of its choosing, and then resolve those effects.
\end{remark}

\section{Concluding Remarks}\label{sec:concluding-remarks}

We have shown how to extend the free cornering of a symmetric monoidal category to support both branching communication protocols and iterated communication protocols, bringing it closer to existing systems of session types. Specifically, we have constructed the free cornering with choice (Definition~\ref{def:free-cornering-choice}) and the free cornering with iteration (Definition~\ref{def:iter-doub}) of a distributive monoidal category, shown that they inherit significant categorical structure from the free cornering, and provided some evidence that they fit well into the categorical landscape. Further, we have constructed the double category of stateful transformations (Definition~\ref{def:stateful-transformations}) --- a model of the structure found in the free cornering, free cornering with choice, and free cornering with iteration.

While our work constitutes a significant step, the path is long, and we envision our work here as a small part of a much larger research project surrounding the free cornering. In this final section we elucidate this project by outlining a number of directions for future work:

\paragraph{Active Iteration} There is a mismatch between our constructions of the free cornering with choice and the free cornering with iteration. In the free cornering with choice, we have a pair of dual operations $-+-$ and $-\times-$ on cells corresponding to \emph{reactive} protocol choice, and a third operation $[-,-]$ which allows \emph{active} protocol choice. In the free cornering with iteration we again have a pair of dual operations $(-)^+$ and $(-)^\times$ corresponding to \emph{reactive} protocol iteration, but we are missing their active counterpart. That is, in the free cornering with iteration we cannot model processes that choose whether or not to continue iterating a given protocol as a function of their input. Put another way, we ought to be able to control the iteration of a communication process with a ``while loop'', but this would require a notion of ``while loop'' in the vertical direction. 

We briefly speculate about the form that the axioms for active iteration ought to take. For each quartet of cells $\alpha : \corner{\A}^*\cells{U}{A}{B \oplus A}{W}$, $f : \corner{\A}^*\cells{S}{B}{C}{K}$, $g : \corner{\A}^*\cells{S}{I}{I}{U \otimes S}$, and $h : \corner{\A}^*\cells{W \otimes K}{I}{I}{K}$ we seem to require a cell $\alpha^*_{f,g,h} : \corner{\A}^*\cells{S}{B \oplus A}{C}{K}$ satisfying:
\begin{mathpar}
  \includegraphics[height=1.2cm,align=c]{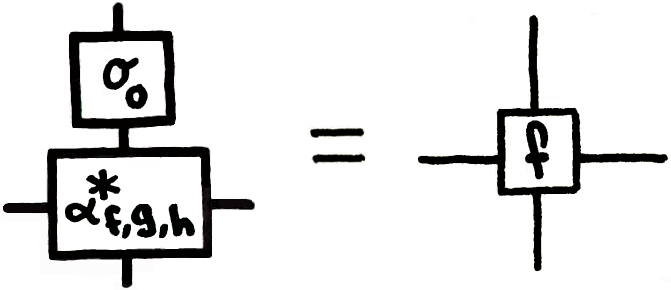}
  
  \includegraphics[height=1.4cm,align=c]{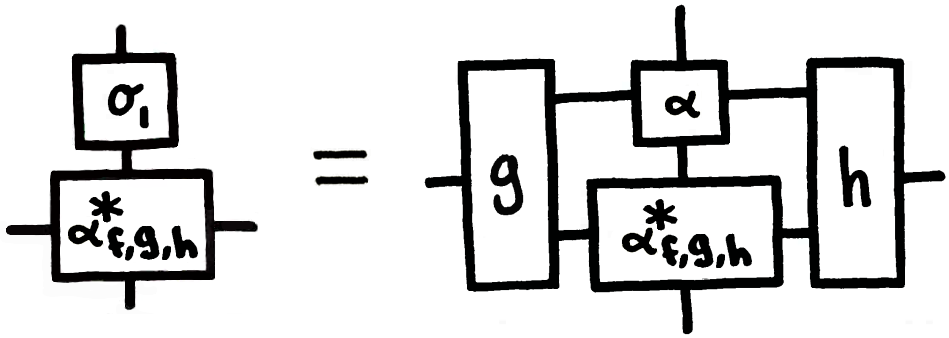}
\end{mathpar}
Significantly, asking for $\alpha^*_{f,g,h}$ to be the unique such cell is too strong, and collapses the hom-sets of the resulting category of vertical cells.
While this sort of active iteration is convenient for constructing examples, it is unclear what sort of properties we ought to ask for in order to obtain e.g., well-behaved crossing cells. 
We speculate that a double-categorical analogue of the notion of \emph{uniform trace operator} (see e.g,~\cite{Hasegawa2003}) will suffice, but how such an analogue should look has not yet been fully worked out. 

In the presence of cells $\alpha^*_{f,g,h}$ we may extend Example~\ref{ex:bread} as follows: let $\texttt{buy} : \cells{I}{S_\texttt{bread} \otimes \texttt{\$} \otimes S_\texttt{\$}}{S_\texttt{bread} \oplus (S_\texttt{bread} \otimes \texttt{\$} \otimes S_\texttt{\$})}{\texttt{bread}^\bullet \otimes \texttt{bread}^\bullet \otimes \texttt{\$}^\circ}$ be the cell below on the left, then define $\texttt{buys'} = \texttt{buy}^*_{1_{S_\texttt{bread}},\square_I,\ip_1} : \cells{I}{S_\texttt{bread} \oplus (S_\texttt{bread} \otimes \texttt{\$} \otimes S_\texttt{\$})}{S_\texttt{bread}}{(\texttt{bread}^\bullet \otimes \texttt{bread}^\bullet \otimes \texttt{\$}^\circ)^+}$, and let $\texttt{buys} : \cells{I}{S_\texttt{bread} \otimes S_\texttt{\$}}{S_\texttt{bread} \otimes S_\texttt{\$}}{(\texttt{bread}^\bullet \otimes \texttt{bread}^\bullet \otimes \texttt{\$}^\circ)^+}$ be the cell below on the right:
  \begin{mathpar}
    \includegraphics[height=2cm,align=c]{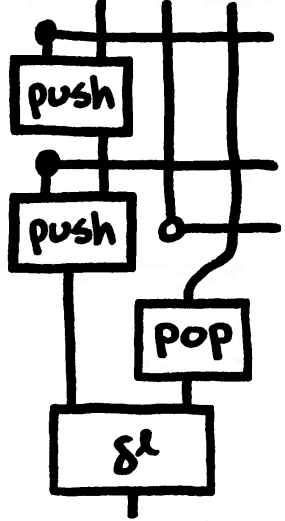}

    \includegraphics[height=2cm,align=c]{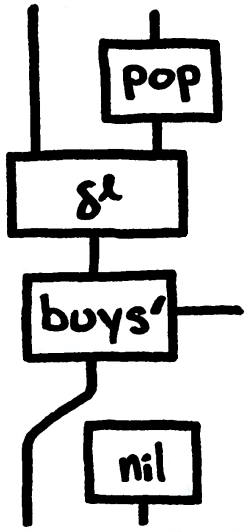}
  \end{mathpar}
  Now the behaviour of $\texttt{buys}$ depends on how much money it has. Specifically, we have:
  \begin{mathpar}
    \includegraphics[height=1cm,align=c]{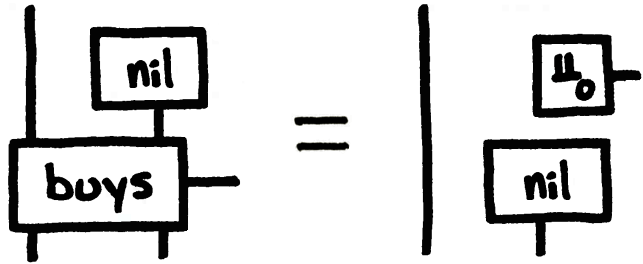}

    \includegraphics[height=1.5cm,align=c]{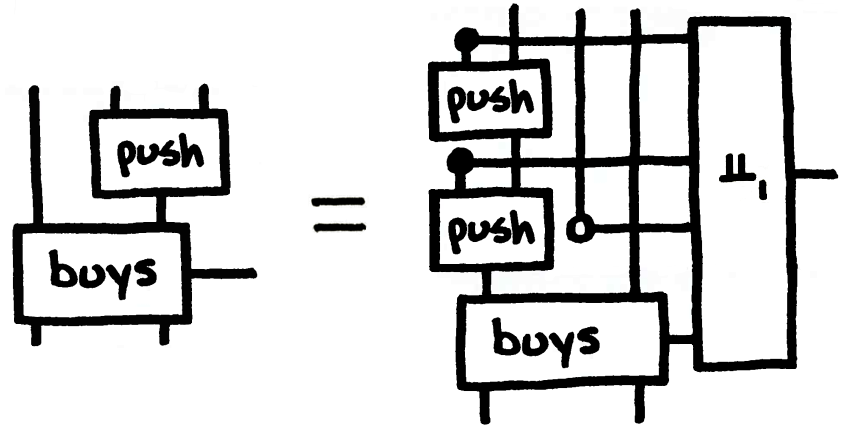}
  \end{mathpar}
  so $\texttt{buys}$ buys bread until it is out of money. Now, we may also consider $\frac{\texttt{sales}}{\texttt{buys}}$, the process which first sells bread until instructed to switch modes, and then buys bread until it is out of money.

\paragraph{Abstract Definitions} In the free cornering, the corner cells carry the structure of a proarrow equipment. A natural question is what structure is carried by the cells of the free cornering with choice and free cornering with iteration. While the structure of the free cornering with choice is clearly some sort of product or coproduct on a single-object double category, it is not clear what sort of limit this is. We remark that it does not seem to be directly related to the double-categorical limits studied in~\cite{Grandis1999}. Similarly, it is unclear what double-categorical structure the cells of the free cornering with iteration carry. We suspect this to be a fruitful direction for future work. 

\paragraph{Term Logic for Cells} Any comparison of the free cornering (with or without choice and iteration) to existing models of concurrent computation is made somewhat awkward by the lack of a term calculus and accompanying term rewriting system for the free cornering. Most existing process calculi and process algebras are first and foremost term calculi, and do not tend to have an accompanying categorical semantics. The free cornering exists only as categorical semantics. Thus, in order to better situate our work in the literature on concurrent computation we would seem to require a term calculus for the free cornering.

While the terms of a rewriting system often form a category, we are not aware of any rewriting systems in which the terms form a double category. In particular, while systems of \emph{tile logic} (see e.g.,~\cite{Bruni2002}) form double categories, there the cells of the double category in question correspond to the \emph{rewrites}, while for us the cells must correspond to the \emph{terms}. There is an evident notion of \emph{$\mathsf{Cat}$-enriched double category}, in which the cell-sets of the double category in question are in fact categories --- in which morphisms correspond to rewrites --- and the composition operations are given by functors. $\mathsf{Cat}$-enriched categories are known to model rewriting systems in which the terms form a category~\cite{Power2005}, and so we expect that the rewriting systems appropriate to our setting will form $\mathsf{Cat}$-enriched double categories. This requirement should guide future developments in in the direction of a term logic for the free cornering. 
  
\paragraph{Coherence and Vertical Cells} An important property of the free cornering is that the vertical cells are the base category:
\begin{proposition}[\cite{Nester2021a}]\label{prop:verticaloriginal}
Let $A$ be a symmetric monoidal category. Then there is an isomorphism of categories $\bv\,\corner{\A} \cong \A$.
\end{proposition}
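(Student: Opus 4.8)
The plan is to exhibit the assignment $f \mapsto \corner{f}$ as the action on morphisms of a functor $\corner{-} : \A \to \bv\,\corner{\A}$ and to show this functor is invertible. On objects the two categories already agree: the objects of $\bv\,\corner{\A}$ are the elements of the horizontal edge monoid $\A_0$, and we send $A$ to $A$. On morphisms, a map $A \to B$ of $\bv\,\corner{\A}$ is by definition a cell of $\corner{\A}\cells{I}{A}{B}{I}$, composition in $\bv\,\corner{\A}$ is vertical composition of cells, and identities are the cells $1_A$. The generating equations $\corner{fg} = \frac{\corner{f}}{\corner{g}}$ and $\corner{1_A} = 1_A$ say precisely that $\corner{-}$ preserves composition and identities, so it is a functor, and it is manifestly the identity on objects. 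Since an identity-on-objects functor is an isomorphism of categories exactly when it is full and faithful, the entire statement reduces to showing that for all $A,B$ the map $\A(A,B) \to \corner{\A}\cells{I}{A}{B}{I}$, $f \mapsto \corner{f}$, is a bijection.

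Fullness is the substantive point: I would show that every cell with trivial left and right boundary has the form $\corner{f}$. The guiding idea is that the side boundaries of a cell faithfully record, through the free monoid $\ex{\A}$, every corner cell used along them --- each $\getl^A$ or $\putr^A$ contributes a generator $A^\circ$ and each $\getr^A$ or $\putl^A$ a generator $A^\bullet$ --- and since $\ex{\A}$ is freely generated on polarised objects these generators admit no cancellation under $\otimes$. Consequently, in a cell whose side boundaries are both $I$, every corner cell must be paired with a partner and eliminated using the yanking equations, which return only vertical identities or honest morphisms of $\A$. I would make this precise by first proving a normal form lemma for arbitrary cells $\corner{\A}\cells{U}{A}{B}{W}$: using the corner cells to ``bend'' horizontal wires onto the vertical boundary and back (the companion/conjoint bijections furnished by the yanking equations), one reduces each cell-set to a hom-set of $\A$, naturally in the boundaries. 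In the case $U = W = I$ there are no boundary wires to bend, and the lemma asserts directly that $\corner{\A}\cells{I}{A}{B}{I}$ consists of exactly the cells $\corner{f}$.

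Faithfulness amounts to conservativity of the embedding: $\corner{f} = \corner{g}$ must force $f = g$. The cleanest route is to arrange the normal form of the previous step as a genuine bijection, so that the chosen representative $f$ of a vertical cell is uniquely determined; injectivity of $f \mapsto \corner{f}$ then follows at once, and fullness and faithfulness are obtained together. I expect this normal form / bending step to be the main obstacle. The delicate part is not the \emph{existence} of a representative but the \emph{canonicity} of the correspondence: one must verify that the only identifications the free double category imposes among composites of generators are those generated by the double-category axioms together with the generating and yanking equations, that these never equate $\corner{f}$ with $\corner{g}$ for distinct $f,g$, and that they never produce a trivial-boundary cell outside the image of $\corner{-}$. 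This is most safely discharged by appealing to the explicit construction of free double categories in~\cite{Fio08} and analysing the yanking equations as a confluent, terminating rewriting system on the underlying pasting diagrams, whose normal forms with empty vertical boundary are precisely the single-tile diagrams $\corner{f}$.
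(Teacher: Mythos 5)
There is a genuine gap, and it is worth noting first that the paper gives you nothing to check against: Proposition~\ref{prop:verticaloriginal} is imported from~\cite{Nester2021a} and is not proved here at all. Your reduction of the statement to fullness and faithfulness of the identity-on-objects functor $f \mapsto \corner{f}$ is correct and is the standard framing, and your diagnosis of where the difficulty lies is also correct. The problem is that the step you identify as ``the main obstacle'' --- the normal form lemma, i.e.\ that every cell of $\corner{\A}\cells{I}{A}{B}{I}$ equals $\corner{f}$ for a \emph{unique} $f$ --- is the entire mathematical content of the proposition, and you do not carry it out. Asserting that it ``is most safely discharged by appealing to~\cite{Fio08} and analysing the yanking equations as a confluent, terminating rewriting system'' is a statement of intent, not an argument: you would need to exhibit the rewriting system, orient the yanking and generator equations, and prove termination and confluence \emph{modulo} the double-category axioms (interchange in particular), which is exactly the delicate part. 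Rewriting modulo interchange on pasting diagrams is not routine, and nothing in your text indicates why critical pairs resolve or why the procedure terminates. Your informal cancellation argument (``every corner cell must be paired with a partner'') is a consequence of the normal form lemma, not a proof of it: the freeness of $\ex{\A}$ constrains only the boundary, not the interior wiring of a pasting diagram.

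The surrounding text of the paper confirms that this is not a gap one can wave at: the authors state the analogous facts for $\corner{\A}^\oplus$ and $\corner{\A}^*$ only as a conjecture, explicitly because they ``currently lack the machinery necessary to prove'' it, the missing machinery being precisely the term calculus and rewriting system you invoke. A secondary, smaller point: the ``bending'' bijections you mention (reducing $\corner{\A}\cells{U}{A}{B}{W}$ to a hom-set of $\A$ via companions and conjoints) do no work in the case $U = W = I$ that the proposition actually concerns, so they cannot substitute for the normalization argument; at best they reduce the general conservativity statement to the vertical one, which is the direction you do not need.
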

We think of this as a kind of coherence. We conjecture that the free cornering with choice and free cornering with iteration are also coherent in this way:
\begin{conjecture}
  Let $\A$ be a distributive monoidal category. Then:
  \begin{enumerate}[(i)]
  \item There is an isomorphism of categories $\bv\,\corner{\A}^\oplus \cong \A$.
  \item There is an isomorphism of categories $\bv\,\corner{\A}^* \cong \A$.
  \end{enumerate}
\end{conjecture}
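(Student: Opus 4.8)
The plan is to prove that the evident embedding $\iota : \A \to \bv\,\corner{\A}^\oplus$, which is the identity on objects and sends a morphism $f : A \to B$ to the vertical cell $\corner{f} : \cells{I}{A}{B}{I}$, is an isomorphism of categories, and to treat $\corner{\A}^*$ in the same way. That $\iota$ is a functor is immediate from the generating equations $\corner{fg} = \frac{\corner{f}}{\corner{g}}$ and $\corner{1_A} = 1_A$. The first useful observation is structural: among the generating cells of $\corner{\A}^\oplus$ (and of $\corner{\A}^*$), the only ones that can themselves be vertical cells---having trivial left and right boundary---are the $\corner{f}$ and the copairings $[\alpha,\beta]$ of vertical cells, since every other generator (corner cells, projections $\pi_i$, injections $\ip_i$, reactions $\alpha \times \beta$ and $\alpha + \beta$, and the iteration cells $\alpha^\times_{f,g}$, $\alpha^+_{f,g}$) carries at least one non-unit vertical boundary such as $A^\circ$, $A^\bullet$, $U \times W$, $U + W$, or $U^\times \otimes K$. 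Moreover, by Lemma~\ref{lem:copairing-coincide} the copairing of two cells $\corner{f}$ and $\corner{g}$ is again of the form $\corner{[f,g]}$. It therefore suffices to show that $\iota$ is full and faithful.

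Faithfulness I would obtain from the model of stateful transformations. Composing $\iota$ with the double functor $D$ of Lemma~\ref{lem:doublefunctorchoice} and restricting to vertical cells sends $f$ to $D(\corner{f}) = [f]$, and the assignment $[-]$ is faithful; hence $\corner{f} = \corner{g}$ in $\corner{\A}^\oplus$ forces $[f] = [g]$ and so $f = g$. Since a general distributive monoidal $\A$ need not be cartesian closed, the first step here is to embed $\A$ fully and faithfully into a cartesian closed category $\C$ in a way that preserves the coproducts and distributivity (for instance via a Yoneda-style completion into a presheaf category), and to run the argument in $\mathsf{S}(\C)$; faithfulness of the composite embedding of $\A$ into $\bv\,\mathsf{S}(\C)$ then yields faithfulness of $\iota$.

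Fullness is the heart of the matter: I would show by structural induction that every vertical cell reduces to some $\corner{h}$, extending the argument for Proposition~\ref{prop:verticaloriginal}. In a closed cell each use of a generator with a non-unit vertical boundary must be cancelled against a matching use in a horizontal or vertical composite, and the defining equations function as rewrite rules that eliminate each matched pair: the yanking equations for the corner cells, the selection equations $(\alpha \times \beta) \mid \pi_i$ and $\ip_i \mid (\alpha + \beta)$ for reactive choice, the equations $\frac{\corner{\sigma_i}}{[\alpha,\beta]}$ for input branching, and the unfolding equations of Definition~\ref{def:iter-doub} for iteration. The absorption properties of Lemma~\ref{lem:copairing-absorbs} let me float all surviving copairings to the boundary, where Lemma~\ref{lem:copairing-coincide} merges them into a single $\corner{\cdot}$; what remains is a string diagram of $\A$, i.e.\ a morphism $h$ with $\iota(h)$ equal to the original cell. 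Establishing this rigorously amounts to proving that the associated rewriting system is confluent and terminating on closed cells, with normal forms exactly the $\corner{h}$.

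The main obstacle, and the reason the statement is only conjectured, is concentrated in part (ii). For iteration the matched-pair reduction must contend with the types $U^\times = I \times (U \otimes U^\times)$, whose unfolding equations can in principle be applied without bound; one must argue that closedness of a cell pins down the number of unrollings to a finite, determined value, which I expect to require the coinductive reasoning principle of Remark~\ref{rem:iter-coinduction} in place of a naive structural induction on terms. Compounding this, the faithfulness argument for the iteration case needs a genuine total double functor $\corner{\A}^* \to \mathsf{S}(\C)$, and the construction of $F^+$ and $F^\times$ in $\mathsf{S}(\C)$ exists only when the requisite initial algebras and final coalgebras do; arranging a cartesian closed $\C$ into which $\A$ embeds and in which all these (co)algebras exist---compatibly with the coproduct and distributive structure---is delicate, and is the most likely point at which a fully general proof would stall.
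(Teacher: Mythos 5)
First, a point of order: the paper offers no proof of this statement. It is posed as a conjecture, and the authors say explicitly that they ``currently lack the machinery necessary to prove'' it, identifying a term calculus and rewriting system for the free cornering as the most promising route. So there is no proof in the paper to compare against; your proposal is an attempt at an open problem and must be judged on whether it actually closes the gap. It does not, although it correctly locates where the difficulty lives and its overall strategy (bijective on objects, faithful, full via a normal-form argument) coincides with the approach the authors themselves sketch.

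Two concrete problems. For fullness, your reduction to ``the associated rewriting system is confluent and terminating on closed cells, with normal forms exactly the $\corner{h}$'' is not a proof step but a restatement of the entire conjecture: the assertion that the yanking, selection, branching and unfolding equations can be oriented into a confluent, terminating system on cells with trivial vertical boundary is precisely the missing machinery, and nothing in the sketch establishes it. The interchange law alone makes confluence for double-categorical term rewriting delicate, and the unfolding equation $U^\times = I\times(U\otimes U^\times)$ has no evident terminating orientation, as you concede. For faithfulness, the route through $\mathsf{S}(\C)$ fails at the stated level of generality: the stateful-transformations model of choice is built over a \emph{cartesian} closed category, with the horizontal edge monoid given by the cartesian product and the cells $\alpha\times\beta$ defined via the pairing $\langle -,-\rangle$. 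A general distributive monoidal category has a tensor that need not be cartesian, and a fully faithful strong monoidal embedding of $(\A,\otimes)$ into a cartesian monoidal category would equip every object of $\A$ with natural projections $A\otimes B\to A$ and a natural diagonal, which a general symmetric monoidal category does not admit; moreover a plain Yoneda-style completion does not preserve the coproducts you need. So the model-based faithfulness argument only works when $\A$ is already cartesian, and for part (ii) you acknowledge the further obstruction that $F^+$ and $F^\times$ need not exist in $\mathsf{S}(\C)$. As written, the proposal establishes neither fullness nor faithfulness, and the statement remains, as in the paper, a conjecture.
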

While we believe it to be true, we currently lack the machinery necessary to prove our conjecture. The most promising approach looks to be through the sort of term calculus and rewriting system for the free cornering discussed above, further motivating its development. 

\paragraph{Additional Effects of Stateful Transformations}
We have given a model of the free cornering in terms of strong functors and strong natural transformations. Besides the protocols in the image of the double functor from the free cornering of $\C$ into $\Dst{\C}$, other effects modelled by strong monads occur as protocols in $\mathsf{S}(\C)$ as well, such as monads for nondeterminism and probability. One direction for future work would be to axiomatise selected effects directly in the free cornering. Furthermore, the representation of computational effects in the form of a double category could help us describe both operational and denotational semantics of effectful programs. On the operational side, double cells function as a kind of effect handler \cite{Pretnar13}, translating effects received from its interior into effects invoked in its environment. On the denotational side, models related to stateful nondeterministic runners \cite{Voorneveld22} and other program-environment interaction laws \cite{Katsumata20} seem particularly suitable for interpretation in this model.

\bibliographystyle{plain}
\bibliography{citations}

\end{document}